\theoremstyle{plain}
\newtheorem{The}{Theorem}%[section]
\newtheorem*{The*}{Theorem}
\newtheorem{Pro}{Proposition}
\newtheorem{Lem}{Lemma}
\newtheorem{Cor}{Corollary}
\newtheorem*{Cor*}{Corollary}
\theoremstyle{definition}
\newtheorem{Def}{Definition}
\newtheorem{Rem}{Remark}
\newtheorem*{Rem*}{Remark}
\numberwithin{equation}{section}
\numberwithin{The}{section}
\numberwithin{Def}{section}
\numberwithin{Rem}{section}
\numberwithin{Exa}{section}
\numberwithin{Lem}{section}
\numberwithin{Pro}{section}
\numberwithin{Cor}{section}
\DeclareMathOperator{\Tr}{tr}             % trace
\DeclareMathOperator{\Min}{Min}
\DeclareMathOperator{\del}{\partial}
\newcommand{\bigletter}[1]{\mathlarger{#1}}
\newcommand{\bigPi}{\bigletter{\Pi}}
\newcommand{\R}{\mathbb{R}}
\newcommand{\C}{\mathbb{C}}
\newcommand{\N}{\mathbb{N}}
\newcommand{\Z}{\mathbb{Z}}
\renewcommand{\H}{\mathbb{H}}
\newcommand{\bigslant}[2]{{\raisebox{.2em}{$#1$}\left/\raisebox{-.2em}{$#2$}\right.}}
\begin{document}

%\lipsum
\title[Constrained Willmore minimizers of non-rectangular conformal class]{First explicit constrained Willmore minimizers of non-rectangular conformal class}

\author{Lynn Heller}

\address{ Institut f\"ur Differentialgeometrie\\  Leibniz Universit{\"a}t Hannover\\ Welfengarten
1\\ 30167 Hannover\\ Germany}
 
 \email{lynn.heller@math.uni-hannover.de}
 \author{Cheikh Birahim Ndiaye}

\address{Department of Mathematics of Howard University\\
204 Academic Support Building B
Washington, DC 20059k \\ USA
 }
 \email{cheikh.ndiaye@howard.edu}
%\subjclass[2010]{Primary 53A05, 53 A 30, 53C42; Secondary 37K15}

%\subjclass{53A10,53C42,53C43,14H60}
 \date{\today}

\thanks{The first  author wants to thank the Ministry of Science, Research and Art Baden-W\"uttemberg and the European social Fund for supporting her research within the Margerete von Wrangell Programm. Further, both authors are indebted to the Baden-W\"urttemberg foundation for supporting the project within the Eliteprogramm for Postdocs. Moreover, the second author thank the Deutsche Forschungsgemeinschaft (DFG) for financial support through the project ''Fourth-order uniformization type theorem for $4$-dimensional Riemannian manifolds'' and the Schweizerischer Nationalfonds (SNF) for financial support through the project PP00P2\_144669}
\noindent

\maketitle

\begin{abstract} 
We study immersed tori in $3$-space minimizing the Willmore energy in their respective conformal class. Within the rectangular conformal classes $\;(0,b)\;$ with $\;b \sim 1\;$ the homogenous tori $\;f^b\;$ are known to be the unique constrained Willmore minimizers (up to invariance). In this paper 
we generalize this result and show that the candidates constructed in \cite{HelNdi2} are indeed constrained Willmore minimizers in certain non-rectangular conformal classes $\;(a,b).\;$ 
%In a first step we explicitly construct a $2$-dimensional family of putative minimizers parametrized by their conformal class $\;(a,b) \sim (0,1).\;$ For $\;b \sim 1,\;$ $b \neq1\;$ fixed, this family is then shown to minimize for $\;a \sim_b 0^+.\;$
 Difficulties arise from the fact that these minimizers are non-degenerate for  $\;a \neq 0\;$ but smoothly converge to the degenerate homogenous tori $\;f^b\;$ as $\;a \longrightarrow 0.\;$
 As a byproduct of our arguments, we show  that the minimal Willmore energy $\;\omega(a,b)\;$ is real analytic and concave in $\;a \in (0, a^b)\;$ for some $\;a^b>0\;$ and fixed $\;b \sim 1,\;$ $b \neq 1.$

\begin{comment}
\begin{center}
\vspace{0.5cm}
\includegraphics[width= 0.19\textwidth]{../../../Willmore/RenderDelaunayundMinimierer/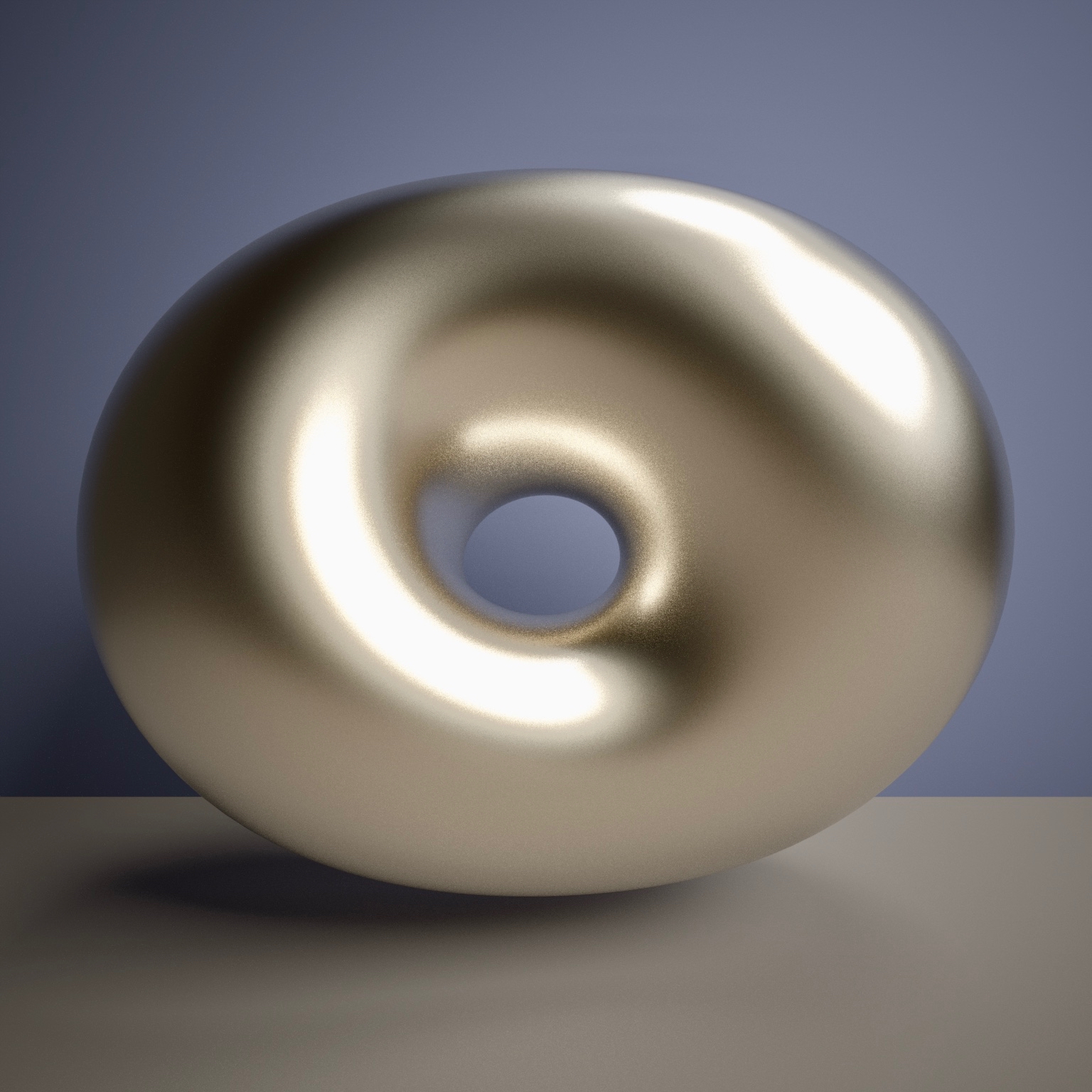}\hspace{0.2cm}
\includegraphics[width= 0.19\textwidth]{../../../Willmore/RenderDelaunayundMinimierer/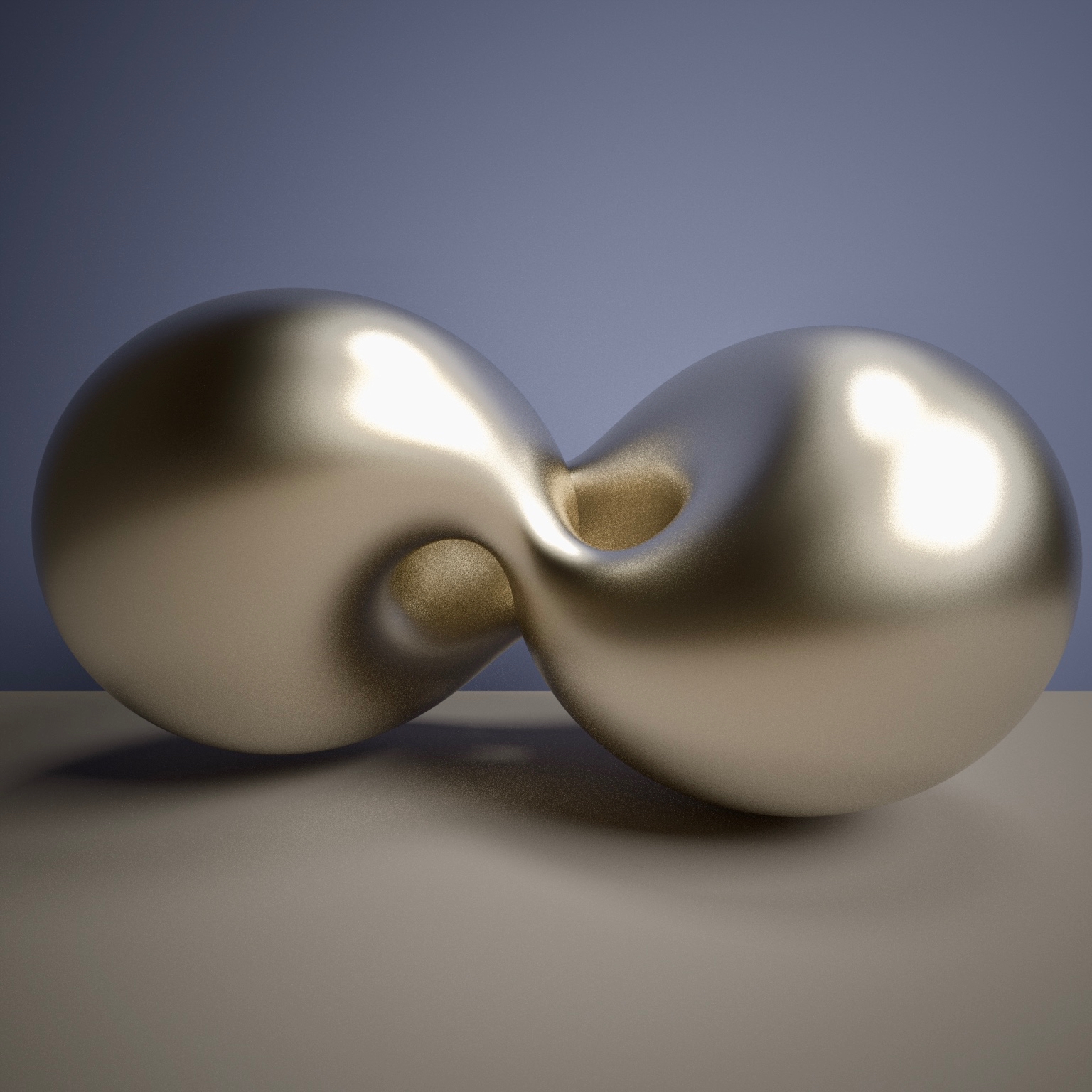}\hspace{0.2cm}
\includegraphics[width= 0.19\textwidth]{../../../Willmore/RenderDelaunayundMinimierer/}\hspace{0.2cm}
\includegraphics[width= 0.19\textwidth]{../../../Willmore/RenderDelaunayundMinimierer/}
\vspace{-0.6cm}
\end{center}
\end{comment}
 \end{abstract}

\setcounter{tocdepth}{1}

 \tableofcontents

%%%%%%%%%%%%%%%%%%%%%%%%%%%%%%%%%%%%%%%%%%%%%%%%%%%%%%%%%%%%%%%%%%%%%%%%%%%%%%
%           Introduction                                                     %
%%%%%%%%%%%%%%%%%%%%%%%%%%%%%%%%%%%%%%%%%%%%%%%%%%%%%%%%%%%%%%%%%%%%%%%%%%%%%%

\section{Introduction and statement of the results}
In the 1960s Willmore \cite{Willmore}  proposed to study the critical values and critical points of the {\em bending energy} 
\[
\mathcal{W}(f)=\int_M H^2dA,
\]

the average value of the squared mean curvature $\;H\;$ of an immersion $\;f\colon M\longrightarrow \R^3\;$ of a closed surface  $\;M.\;$ In this definition we denote by $\;dA\;$ the induced volume form and  $\;H:= \tfrac{1}{2}tr (\text{II})\;$ with II the second fundamental form of the immersion $\;f.\;$
Willmore showed that the absolute minimum of this functional is attained at round spheres with Willmore energy $\;\mathcal{W}=4\pi.\;$ He also conjectured that the minimum over surfaces of genus $1$ is attained at (a suitable stereographic projection of) the Clifford torus in the 3-sphere with $\;\mathcal{W}=2\pi^2.\;$
It soon was noticed that the bending energy $\;\mathcal{W}\;$ (by then also known as the {\em Willmore energy}) is invariant under M\"obius transformations of the target space -- in fact, it is invariant under conformal changes of the metric in the target space, see \cite{Blaschke, Chen}. Thus, it makes no difference for the study of the Willmore functional which constant curvature  target space is chosen. \\

Bryant \cite{Bryant} characterized all Willmore spheres as M\"obius transformations of genus $\;0\;$ minimal surfaces in $\;\R^3\;$ with planar ends. The value of the bending energy on Willmore spheres is thus quantized to be $\; \mathcal{W}=4\pi k,\;$ with $\;k\geq 1\;$ the number of ends. With the exception of  $\;k=2,3,5,7\;$ all values occur. For more general target spaces the variational setup to study this surfaces can be found in \cite{Mondino}.
The first examples of Willmore surfaces not M\"obius equivalent to minimal surfaces were found by Pinkall \cite{Pinkall}. They were constructed via lifting elastic curves $\;\gamma\;$ with geodesic curvature $\;\kappa\;$ on the 2-sphere under the Hopf fibration to Willmore tori in the 3-sphere, where
elastic curves are the critical points for the elastic energy 

$$E(\gamma)= \int_{\gamma} (\kappa^2+1)ds$$

 and $\;s\;$ is the arclength parameter of the curve. Later Ferus and Pedit \cite{FerusPedit} classified all Willmore tori equivariant under a M\"obius $\;S^1$-action on the 3-sphere (for the definition of $\;S^1$-action see \cite{He1}).\\

The Euler-Lagrange equation for the Willmore functional 

\[
{\Delta} H+ 2H(H^2-K)=0,
\]

 where $\;K\;$ denotes the Gau\ss ian curvature of the surface $\;f\colon M\longrightarrow \R^3\;$ and $\;\Delta\;$ its Laplace-Beltrami operator, is a 4th order elliptic PDE for $\;f\;$ since the mean curvature vector $\;\vec{H}\;$ is the normal part of $\;\Delta f.\;$ Its analytic properties are prototypical for non-linear bi-Laplace equations. Existence of a minimizer for the Willmore functional $\;\mathcal{W}\;$ on the space of smooth immersions from 2-tori was shown by Simon \cite{Simon}. Bauer and Kuwert \cite{BauerKuwert} generalized this result to higher genus surfaces. After a number of partial results, e.g. \cite{LiYau}, \cite{MontielRos}, \cite{Ros}, \cite{Top}, \cite{FLPP},  Marques and Neves  \cite{MarquesNeves}, using Almgren-Pitts min-max theory, gave a proof of the Willmore conjecture in $3$-space in 2012. An alternate strategy was proposed in \cite{Schmidt}\\

A more refined, and also richer, picture emerges when restricting the Willmore functional to the subspace of smooth immersions $\;f\colon M\longrightarrow \R^3\;$ inducing a given conformal structure on $\;M.\;$  Thus, $\;M\;$  now is  a Riemann surface and we study the Willmore energy $\;\mathcal{W}\;$ on the space of smooth conformal immersions $\;f\colon M\longrightarrow \R^3\;$ whose critical points are called {\em (conformally) constrained Willmore surfaces}. The conformal constraint augments the Euler-Lagrange equation by  $\;\omega\in H^0(K^2_M)\;$ paired with the trace-free second fundamental  form $\;\mathring{\text{II}}\;$ of the immersion

\begin{equation}\label{CWEL}
\Delta H+ 2H(H^2-K)=\,<\omega,\mathring{\text{II}}>
\end{equation}

with $\;H^0(K^2_M)\;$ denoting the space of holomorphic quadratic differentials. In the Geometric Analytic literature, the space $\;H^0(K_M^2)\;$ is also referred to as $\;S_2^{TT}\big(g_{euc}\big)\;$ the space of symmetric, covariant, transverse and traceless $2$-tensors with respect to the euclidean metric $\;g_{euc}.\;$
Since there are no holomorphic (quadratic) differentials on a genus zero 
Riemann surface, constrained Willmore spheres are the same as Willmore spheres. For higher genus surfaces this is no longer the case: constant mean curvature surfaces (and their M\"obius transforms) are constrained Willmore, as one can see by choosing $\;\omega:=\mathring{\text{II}}\;$ as the holomorphic Hopf differential in the Euler Lagrange equation \eqref{CWEL}, but not Willmore unless they are minimal in a space form. 
Bohle \cite{Bohle}, using techniques developed in \cite{BohLesPedPin} and \cite{BohPedPin_ana}, showed that all constrained Willmore tori have finite genus spectral curves and are described by linear flows on the Jacobians of those spectral curves\footnote{For the notion of spectral curves and the induced linear flows on the Jacobians see \cite{BohLesPedPin}.}. Thus the complexity of the map $\;f\;$ heavily depends on the genus  its spectral curve $\;\Sigma$ -- the spectral genus -- giving the dimension of the Jacobian of $\;\Sigma\;$ and thus codimension of the linear flow. The simplest examples of constrained Willmore tori, which have spectral genus zero, are the tori of revolution in $\;\R^3\;$ with circular profiles -- the homogenous tori. Those are stereographic images of products of circles of varying radii ratios in the 3-sphere and thus have constant mean curvature as surfaces in the 3-sphere.  Starting at the Clifford torus, which has mean curvature $\;H=0\;$ and a square conformal structure, these homogenous tori in the 3-sphere parametrized by their mean curvature $\;H\;$ ``converge'' to a circle as $\;H\longrightarrow \infty\;$ and thereby sweeping out  all rectangular conformal structures. Less trivial examples of constrained Willmore tori come from the Delaunay tori of various lobe counts (the $\;n$-lobed Delaunay tori) in the 3-sphere whose spectral curves have genus $1$, see Figure~\ref{fig:torus-tree} and \cite{KilianSchmidtSchmitt1} for their definition. \\
\begin{figure}
\includegraphics[width=0.5\textwidth]{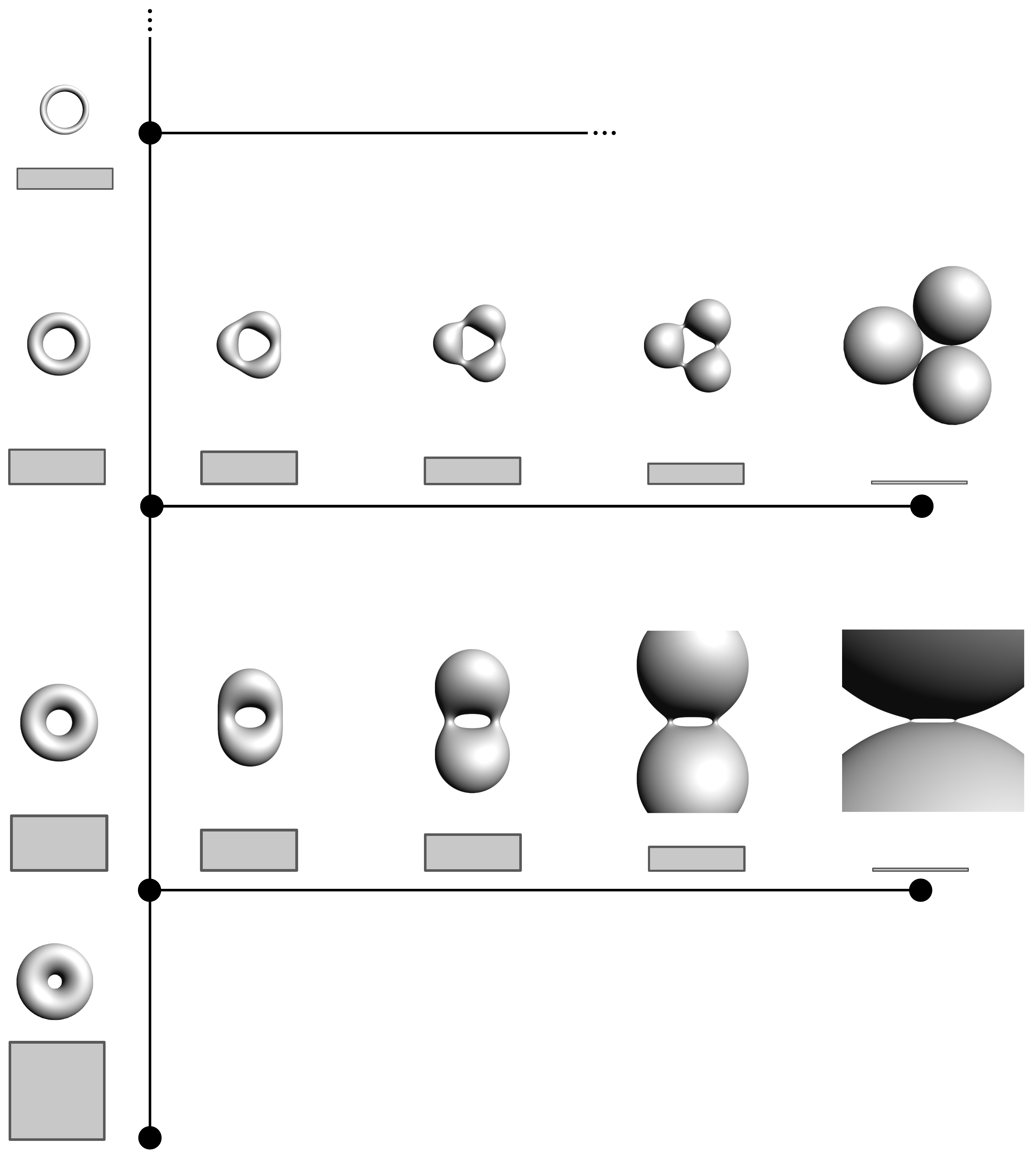}
\caption{
The vertical stalk represents the family of homogenous tori,
starting with the Clifford torus at the bottom.
Along this stalk are bifurcation points from which embedded Delaunay tori continue the homogenous family. The rectangles indicate the conformal types. The family of surfaces starting at the Clifford torus, bifurcating at the first branch point has Willmore energy below $\;8 \pi\;$ and is conjectured to be the minimizer in their respective conformal class.
 Image by Nicholas Schmitt.
}
\label{fig:torus-tree}
\end{figure}

Existence and regularity of a $\;W^{2,2} \cap W^{1, \infty}\;$ minimizer $\;f\colon M\longrightarrow \R^3\;$ for a prescribed Riemann surface structure\footnote{For the notion of $\;W^{2,2} \cap W^{1, \infty}\;$ immersions see \cite{KuwertSchätzle}, \cite{Riviere} or \cite{KuwertLi}.} (constrained Willmore minimizer) was shown by \cite{KuwertSchätzle}, \cite{KuwertLi}, \cite{Riviere2} and \cite{Schätzle} under the assumption that the infimum Willmore energy in the conformal class  is below $\;8\pi.\;$  The latter assumption ensures that minimizers are embedded by the Li and Yau inequality \cite{LiYau}. A broader review of analytic results for Willmore surfaces can be found in the lecture notes \cite{KuwertSchätzle} and \cite{Riviere3}, see also the references therein. \\

Ndiaye and Sch\"atzle \cite{NdiayeSchätzle1, NdiayeSchätzle2} identified the first explicit constrained Willmore minimizers (in every codimension) for rectangular conformal classes in a neighborhood (with size depending on the codimension) of the square class to be the homogenous tori.  These tori of revolution with circular profiles, whose spectral curves have genus 0, eventually have to fail to be minimizing in their conformal class for $\;H >>1,\;$ since their Willmore energy can be made arbitrarily large and any rectangular torus can be conformally embedded into $\;\R^3\;$ (or $\;S^3$) with Willmore energy below $\;8\pi,\;$ see \cite{ KilianSchmidtSchmitt1, NdiayeSchätzle2}. Calculating the 2nd variation of the Willmore energy $\;\mathcal{W}\;$ along homogenous tori Kuwert and Lorenz \cite{KuwertLorenz} showed that zero eigenvalues only appear at those conformal classes whose rectangles have side length ratio $\;\sqrt{k^2-1}\;$ for an integer $\;k\geq 2,\;$ at which the index of the surface increase. These are exactly the rectangular conformal classes from which the $\;k$-lobed Delaunay tori (of spectral genus 1) bifurcate.  Any of the families starting from the Clifford torus, following homogenous tori to the $\;k$-th  bifurcation point, and continuing with the $\;k$-lobed Delaunay tori sweeping out all rectangular classes (see Figure~\ref{fig:torus-tree}) ``converge'' to a neckless of spheres as conformal structure degenerates. The Willmore energy $\;\mathcal W\;$ of the resulting family\footnote{For simplicity we call this family in the following the $\;k$-lobed Delaunay tori.} is strictly monotone and satisfies $\;2\pi^2\leq \mathcal{W}<4\pi k,\;$ see \cite{KilianSchmidtSchmitt1, KilianSchmidtSchmitt2}.  
Thus for $\;k=2\;$ the existence of $2$-lobed Delaunay tori imply that the infimum Willmore energy in every rectangular conformal class is always below $\;8 \pi\;$ and hence there exist embedded constrained Willmore minimizers for these conformal types by \cite{KuwertSchätzle}.
It is conjectured that the minimizers for $\;\mathcal{W}\;$ in rectangular conformal classes are given by the $2$-lobed Delaunay tori. For a more detailed discussion of the $2$-lobe-conjecture see \cite{HePe}.  Surfaces of revolution with prescribed boundary values was studied in \cite{Grunau}. \\

In this paper we turn our attention to finding explicit constrained Willmore minimizer in non-rectangular conformal classes. Putative minimizers were constructed in \cite{HelNdi2}. Our main theorem is the following:\\

\begin{The}[Main Theorem]\label{MainTheorem} $\ $\\
For every $\;b \sim 1\;$ and $ \;b \neq 1\;$ there exists $\;a^b>0\;$ such that for every $\;a \in [0,a^b)\;$  the $(1,2)$-equivariant tori of intrinsic period $1$ (see \cite{HelNdi2} and Figure \ref{1lobe}) with conformal class $\;(a,b)\;$ are constrained Willmore minimizers. Moreover, for $\;b \sim 1\;$ and $\;b \neq 1\;$ fixed, the minimal Willmore energy map 
\begin{equation*}
\begin{split}
\omega(\cdot, b):  \ [&0, a^b) \longrightarrow \R_+,\\ &a \longmapsto \omega(a,b)
\end{split}
\end{equation*}
is concave and for $\;a\neq 0\;$ it is real analytic.\\
\end{The}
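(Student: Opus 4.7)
The plan is to view the theorem as a perturbation of the rectangular case $a=0$, where the homogeneous torus $f^b$ is the unique constrained Willmore minimizer of conformal class $(0,b)$ for $b\sim 1$ by the Ndiaye--Sch\"atzle result. By construction in \cite{HelNdi2} the candidate family $f^{(a,b)}$ already satisfies the constrained Willmore Euler--Lagrange equation \eqref{CWEL} with a non-trivial holomorphic quadratic differential as Lagrange multiplier, depends smoothly on $a$, and reduces to $f^b$ at $a=0$. What remains is to promote these candidates into global minimizers for small $a>0$ and to extract the claimed regularity and concavity of $\omega(a,b)$.

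\textbf{Real-analyticity.} For $a\neq 0$ the family $f^{(a,b)}$ is non-degenerate, meaning that the linearisation of the constrained Willmore operator at $f^{(a,b)}$, modulo the natural invariances (M\"obius action plus reparametrisations), is an isomorphism between the appropriate Banach spaces. Since both the Willmore operator and the conformal constraint are real-analytic in the immersion and in the Lagrange multiplier, the analytic implicit function theorem yields a real-analytic branch of critical points through each $f^{(a,b)}$. Specialising this branch to the one-parameter family and evaluating the energy gives the real-analyticity of $a\mapsto \omega(a,b)=\mathcal{W}(f^{(a,b)})$ on $(0,a^b)$.

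\textbf{Global minimality.} For $b\sim 1$ the $2$-lobed Delaunay tori provide conformal competitors of Willmore energy strictly below $8\pi$, so the existence-and-regularity theory for constrained Willmore minimizers produces a smooth embedded minimizer $g^{(a,b)}$ in every class $(a,b)$ with $a$ small. I would then identify $g^{(a,b)}$ with the candidate by a two-sided argument. First, a compactness argument (using the uniform energy bound below $8\pi$, convergence of the conformal classes, and Li--Yau to exclude concentration) shows that for $a_n\to 0$ every sequence of minimizers $g^{(a_n,b)}$ subconverges, modulo M\"obius transformations and reparametrisations, to the unique minimizer $f^b$ of class $(0,b)$. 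Second, the Kuwert--Lorenz second-variation calculation is positive definite on the constrained directions transverse to the invariances at $f^b$ for $b\sim 1$, $b\neq 1$, since such $b$ avoid the bifurcation values $\sqrt{k^2-1}$; consequently constrained Willmore surfaces sufficiently close to $f^b$ in their conformal class are locally unique. Combined, these force $g^{(a,b)}=f^{(a,b)}$ for every small $a$, which is the claimed minimality.

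\textbf{Concavity and main obstacle.} Concavity of $\omega(\cdot,b)$ on $[0,a^b)$ then follows by differentiating the Lagrange-multiplier identity twice along the real-analytic family: the first derivative computes the $L^2$-pairing of the holomorphic quadratic differential from \eqref{CWEL} with $\partial_a \mathring{\mathrm{II}}$, and a second differentiation, after eliminating tangential terms via the constrained Willmore equation, yields $\partial_a^2 \omega\leq 0$ from the transverse positivity of the second variation. The value at $a=0$ is handled separately by a continuity-from-the-right argument using the candidate family. The principal obstacle is precisely the degenerate-to-non-degenerate transition highlighted in the abstract: at $a=0$ the candidate acquires an additional $S^1$-symmetry and the Lagrange multiplier vanishes, while for $a>0$ the symmetry is broken and the multiplier is non-trivial. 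The perturbation analysis must therefore split the space of admissible variations into the symmetry direction, the tangent direction to the candidate family, and the genuinely transverse directions, and keep careful track of how the conformal constraint and the Lagrange multiplier interact to break the $a=0$ degeneracy uniformly as $a\downarrow 0$.
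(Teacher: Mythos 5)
Your real-analyticity step for $a\neq 0$ is essentially correct and matches the paper, but the ``global minimality'' step contains the gap that is precisely the central difficulty the paper is built to overcome, and the argument as written would fail.

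You claim that because the Kuwert--Lorenz second variation is positive definite transverse to the invariances at $f^b$ for $b\sim 1$, $b\neq 1$, constrained Willmore surfaces close to $f^b$ in a given conformal class are locally unique, so the abstract minimizer $g^{(a,b)}$ must equal the candidate. This is false. The homogeneous torus $f^b$ is isothermic, so $\delta\bigPi^1(f^b)=0$ and the $\alpha$-Lagrange multiplier at $f^b$ is undetermined; the relevant second variation is that of $\mathcal W_{\alpha,\beta^b}$ and it depends on which $\alpha$ one fixes. There are in fact \emph{many} distinct families of constrained Willmore tori passing through $f^b$ and covering the same nearby non-rectangular classes $(a,b)$ (e.g.\ the constrained Willmore Hopf tori versus the $(1,2)$-equivariant candidates), and Lemma~\ref{mainobservation} shows they are distinguished precisely by the limit value of their $\alpha$-multiplier as $a\to 0$. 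So local uniqueness of critical points in a fixed conformal class near $f^b$ simply does not hold, and compactness of minimizers to $f^b$ cannot by itself single out the candidate. Also, the bifurcation values $\sqrt{k^2-1}$ you invoke concern stability of $\mathcal W_{\beta^b}$ along the rectangular stalk; the degeneracy that matters here is a different one, namely that $\delta^2\mathcal W_{\alpha^b,\beta^b}(f^b)$ has a non-trivial kernel beyond the invariances (that is exactly what makes $\alpha^b$ maximal, cf.\ Lemma~\ref{kernvonalpha} and Proposition~\ref{1d}).

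What is missing is the quantitative identification of the limit Lagrange multiplier of the abstract minimizer. The paper needs two substantial additional ingredients: (i) a classification of solutions near $f^b$ with $\alpha\sim\alpha^b$, $\alpha\leq\alpha^b$, which cannot be done by the plain implicit function theorem because $\delta^2\mathcal W_{\alpha^b,\beta^b}(f^b)$ is degenerate; instead one uses bifurcation theory from a simple eigenvalue after reducing the kernel to one dimension up to invariance (Proposition~\ref{1d}) and checking a third/fourth-variation non-degeneracy (Lemma~\ref{n-thderivative}); and (ii) a relaxed/penalized minimization of $\mathcal W_{\alpha^b}$ with the one-sided constraint $0\leq\bigPi^1(f)\leq a$, which yields monotonicity of $\mathrm{Min}_{(a,b)}$, a.e.\ differentiability of $\omega(\cdot,b)$, and the key convergence $\alpha^{(a,b)}\to\alpha^b$ (for a.e.\ $a\to0$). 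Only with both can one match abstract minimizers to candidates. Your statement that ``at $a=0$ the Lagrange multiplier vanishes'' is also incorrect: the limit is $\alpha^b>0$, not $0$; what vanishes is $\delta\bigPi^1(f^b)$, which is what leaves $\alpha$ undetermined at $a=0$. Finally, concavity in the paper comes from the monotone decrease of $\alpha(a,b)=\partial_a\omega(a,b)$ established for the candidates (Remark~\ref{ab}), not from a general transversality-positivity heuristic; your proposed route to $\partial_a^2\omega\leq 0$ does not clearly produce the correct sign.
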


 \begin{Def}
Let $\;\bigPi = (\bigPi^1, \bigPi^2)\;$  denote the projection map from the space of immersions to the Teichm\"uller space. For $\;\alpha, \beta \in \R\;$ we use the abbreviations

\begin{equation}
\begin{split}
\mathcal W_{\alpha, \beta}(f) &:= \mathcal W(f) - \alpha \bigPi^1(f) - \beta \bigPi^2(f)\\
\mathcal W_{\alpha}(f) &:= \mathcal W(f) - \alpha \bigPi^1(f). \\
\end{split}
\end{equation}

\end{Def}

A crucial quantity to be investigated is the following
\begin{Def}\label{alphab}
Let $\;\beta^b\;$ be the $\;\bigPi^2$-Lagrange multiplier of the homogenous torus $\;f^b.\;$ Then we define

$$\alpha^b:=\text{ max }\{\alpha \ | \ \  \delta^2 \mathcal W_{\alpha, \beta^b} \geq0\}.$$

\end{Def}

With these notations the following Corollary is obtained as a further byproduct of the arguments proving the Theorem. \\

\begin{Cor}
For every $\;b \sim 1\;$ fixed there exists $\;a^b>0\;$ small such that for all $\;\alpha < \alpha^b\;$ the minimization problem

 \begin{equation}
\begin{split}
Min_{b} := \inf \{ \  &\mathcal W_\alpha(f)  |\   f: T^2_{b}:= \C/(2\pi \Z + 2\pi b i \Z) \longrightarrow S^3 \text{ smooth immersion with }\\
  &0 \leq \bigPi^1(f)  \leq a^b \text{ and }\; \bigPi^2(f) = b\ \} 
 \end{split}
 \end{equation}
 
is attained at the homogenous torus $\;f^b$.\\
\end{Cor}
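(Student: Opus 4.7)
The plan is to reduce the problem to a one-dimensional concave minimization in the conformal parameter $a=\bigPi^1(f)$ using the Main Theorem, then to locate the minimum at $a=0$ using the reflection symmetry of the setup together with the variational meaning of $\alpha^b$.

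For any admissible competitor $f$, the Main Theorem gives $\mathcal W(f)\geq \omega(\bigPi^1(f), b)$, with equality realized on the $(1,2)$-equivariant minimizer $f^{a,b}$. Writing $a := \bigPi^1(f)$, this yields
\begin{equation*}
\mathcal W_\alpha(f) \;\geq\; \omega(a,b)-\alpha a \;=:\; G_\alpha(a),
\end{equation*}
with $G_\alpha(0)=\omega(0,b)=\mathcal W_\alpha(f^b)$, so it suffices to show $G_\alpha(a)\geq G_\alpha(0)$ on $[0,a^b]$. The Main Theorem also provides that $\omega(\cdot,b)$ is concave on $[0,a^b]$, hence $G_\alpha$ is concave and its minimum on this compact interval is attained at one of the two endpoints; thus it is enough to establish $G_\alpha(a^b)\geq G_\alpha(0)$, which is equivalent to
\begin{equation*}
\alpha \;\leq\; \sigma(a^b) \;:=\; \frac{\omega(a^b,b)-\omega(0,b)}{a^b}.
\end{equation*}

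Next, the reflection $f\mapsto \bar f$ preserves $\mathcal W$ and $\bigPi^2$ but reverses the sign of $\bigPi^1$. Applied to the Euler-Lagrange equation for $f^b$, this forces the $\bigPi^1$-Lagrange multiplier $\alpha^{b,0}$ of $f^b$ to vanish. By the envelope theorem along the analytic family $\{f^{a,b}\}_{a>0}$ (using the real-analyticity of $\omega(\cdot,b)$ for $a\neq 0$), this gives $\omega'(0^+,b)=\alpha^{b,0}=0$. Together with the concavity of $\omega(\cdot,b)$, one concludes that $\sigma(a^b)\leq 0$ and $\sigma(a^b)\to 0$ as $a^b\to 0$.

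The decisive and most delicate step is to show $\alpha^b<0$. Concavity of $\omega(\cdot,b)$ with $\omega'(0^+,b)=0$ forces $\omega(a,b)<\omega(0,b)$ for small $a>0$, and a second-order expansion along $\{f^{a,b}\}_{a>0}$ should produce a direction $V$ at $f^b$ with $\delta\bigPi^1(V)\neq 0$ along which $\delta^2(\mathcal W-\beta^b\bigPi^2)(f^b)[V,V]$ is strictly negative; this witnesses that $\delta^2\mathcal W_{0,\beta^b}(f^b)$ is not positive semidefinite, so by the variational definition $\alpha^b<0$. Since $\alpha^b$ depends only on $f^b$ and not on $a^b$, while $\sigma(a^b)\to 0$ as $a^b\to 0^+$, one may shrink $a^b$ if necessary to ensure $\sigma(a^b)>\alpha^b$. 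For any $\alpha<\alpha^b$ we then obtain
\begin{equation*}
G_\alpha(a^b)-G_\alpha(0) \;=\; (\sigma(a^b)-\alpha)\,a^b \;>\; 0,
\end{equation*}
and the concavity of $G_\alpha$ propagates this to the entire interval $[0,a^b]$, yielding $\mathcal W_\alpha(f)\geq\mathcal W_\alpha(f^b)$ for all admissible $f$.
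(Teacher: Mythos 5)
Your opening reduction to the one-variable function $G_\alpha(a)=\omega(a,b)-\alpha a$ and the observation that concavity of $\omega(\cdot,b)$ pins the minimum of $G_\alpha$ at an endpoint is fine, but the rest of the argument fails on a point that is not a technicality: you conclude $\alpha^b<0$, whereas the paper computes $\alpha^b>0$. In Section 3 it is shown that $\alpha^1=10\pi^2$ for the Clifford torus (Lemma on the value of $\alpha^1$), and more generally the discussion right after Remark~2.6 notes that strict stability of the Clifford torus forces $\alpha^b>0$ after fixing the orientation $\bigPi^1(f)\geq 0$. Your reflection argument is the source of the error: at $f^b$ one has $\delta\bigPi^1(f^b)=0$ because $f^b$ is isothermic, so the $\bigPi^1$-Lagrange multiplier is simply \emph{not determined} by the Euler--Lagrange equation; the reflection symmetry reflects this degeneracy, it does not force the multiplier to vanish. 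The relevant limiting value along the branch of minimizers is given by Lemma~2.1(1): $\lim_{a\to 0^+}\partial_a\omega(a,b)=\alpha^b$, which is strictly positive, not zero.

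The downstream claims then go wrong as well. Since $\omega'(a,b)=\alpha_{(a,b)}\nearrow\alpha^b>0$ as $a\to 0^+$ (Theorem from \cite{HelNdi2}), $\omega(\cdot,b)$ is \emph{strictly increasing} near $0$, not decreasing --- consistent with $f^b$ being the absolute constrained Willmore minimizer in the class $(0,b)$ (Ndiaye--Sch\"atzle) and with Remark~2.8 which records this monotonicity explicitly. Thus your assertion $\omega(a,b)<\omega(0,b)$ for small $a>0$ is false, and there is no direction witnessing negativity of $\delta^2\mathcal W_{0,\beta^b}(f^b)$.

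The correct line of argument keeps your first paragraph, then uses $\omega'(t,b)=\alpha_{(t,b)}$ with $\alpha_{(t,b)}\nearrow\alpha^b$ as $t\to 0^+$: for $\alpha<\alpha^b$, one shrinks $a^b$ so that $\alpha_{(a^b,b)}>\alpha$, after which $\omega'(t,b)>\alpha$ on $(0,a^b]$, and integrating gives $G_\alpha(a)\geq G_\alpha(0)$ for all $a\in[0,a^b]$. (Note the interplay of quantifiers: strictly, $a^b$ must be chosen depending on how close $\alpha$ is to $\alpha^b$, since $\alpha_{(a,b)}<\alpha^b$ for all $a>0$; the Corollary should be read with this understanding, or with $\alpha$ ranging over a fixed strict sub-level.) Your core misidentification of the sign of $\alpha^b$ and of $\omega'(0^+,b)$ means the proposal does not establish the statement.
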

\begin{figure}


\vspace{0.5cm}
\includegraphics[width= 0.3\textwidth]{hopf-torus-21.jpg}\hspace{1.5cm}
\includegraphics[width= 0.3\textwidth]{hopf-torus-221.jpg}
\caption{ 
Two $(1,2)$-equivariant constrained Willmore tori (with intrinsic period $1$). The tori lie in a $2$-parameter family of surfaces deforming the Clifford torus. This family minimizes the Willmore functional in the respective conformal classes for surfaces "close enough" to the Clifford torus.  Images by Nick Schmitt.}
\label{1lobe}
\end{figure}

The above Theorem and Corollary extends the results in \cite{NdiayeSchätzle1}  which states that the homogenous tori minimizes the Willmore energy in their respective rectangular conformal class in a neighborhood of the square one. The main  difference between \cite{NdiayeSchätzle1} and our case here is that homogenous tori as isothermic surfaces are degenerate w.r.t. to the projection to Teichm\"uller space. Thus by relaxing the minimization problem, Ndiaye and Sch\"atzle were able to restrict to a space where isothermic surfaces solve the relaxed Euler-Lagrange equation and become non-degenerate w.r.t. the associated constraint. Hence they could use the existence and regularity result of \cite{KuwertSchätzle} and the compactness result of \cite{NdiayeSchätzle1} to obtain a  family of abstract minimizers of the constrained Willmore problem smoothly close to the Clifford torus. Furthermore, they show that smoothly close to the Clifford torus there exist only one unique $1$-dimensional family of constrained Willmore tori which are also critical with respect to the relaxed problem using the implicit function theorem.  Therefore the abstract minimizers must coincide with the family of homogenous tori. \\

This is in stark contrast to the case of non-rectangular conformal types. In fact, while  the unique family of constrained Willmore minimizers obtained in \cite{NdiayeSchätzle1} consists of isothermic surfaces, candidates surfaces with non-rectangular class are necessarily non-isothermic, see \cite{HelNdi2}. Further, it is well known within the integrable systems community that there exist various families of constrained Willmore tori deforming\footnote{By deforming a surface $\;f\;$ we mean a smooth family of surfaces containing $\;f.\;$} the Clifford torus covering the same conformal types, as also discussed in \cite{HelNdi2}.  

\begin{figure}[h!]
\includegraphics[width= 0.23\textwidth]{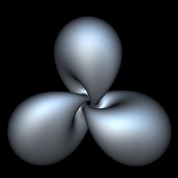} \hspace{0.1cm}
\includegraphics[width= 0.23\textwidth]{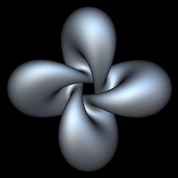} \hspace{0.1cm}
\includegraphics[width= 0.23\textwidth]{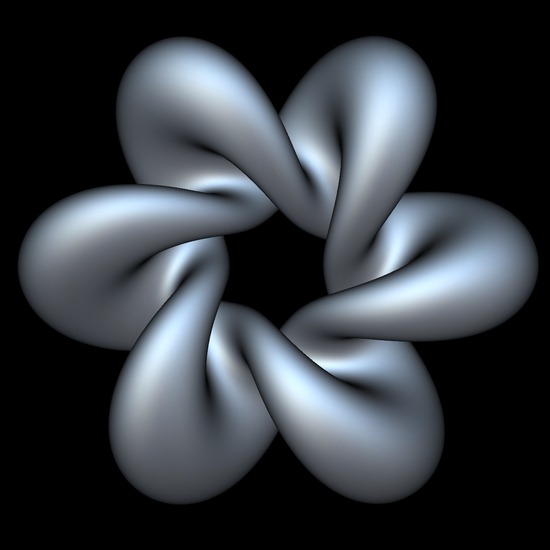} \hspace{0.1cm}
\includegraphics[width= 0.23\textwidth]{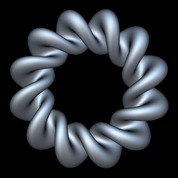} 

\caption{ Equivariant Willmore tori constructed by Ferus and Pedit \cite{FerusPedit}. Each of these surfaces lie in a $1$-parameter family deforming a homogenous torus. Images by Nick Schmitt.
}
\label{1lobe}
\end{figure}

These known families consists of tori given by the preimage of (constrained) elastic curves on $\;S^2\;$ under the Hopf fibration, and are isothermic if and only if they are homogenous \cite{He2}. Moreover, in contrast to tori of revolution, every conformal structure on a torus can be realized by a constrained Willmore Hopf torus \cite{He2}. It has been conjectured by Franz Pedit, Ulrich Pinkall and Martin U. Schmidt that constrained Willmore minimizers should be of Hopf type. Though we disprove this conjecture in this paper,  the actual minimizers we construct lie in the associated family of constrained Willmore Hopf tori, where the Hopf differential of the minimizer is just the one of the associated Hopf surface rotated by a phase. It turns out that the various families deforming the Clifford torus mentioned before can be analytically distinguished by looking at their limit Lagrange multiplier as they converge to the homogenous tori at rectangular  conformal classes. This suggest that to determine the non-rectangular constrained Willmore minimizers we need more control on the abstract minimizers than in the Ndiaye-Sch\"atzle case \cite{NdiayeSchätzle1}, namely the identification of the limit Lagrange multiplier to be exactly $\;\alpha^b\;$ rather than just bounded from above by $\;\alpha^b.$\\

The paper is organized as follows. In the second section we state the main observations leading to a strategy to prove the Main Theorem. It turns out that the degeneracy of an isothermic surface with respect to a penalized Willmore functional (i.e., the second variation has non-trivial kernel) is crucial for the existence of families deforming it. We also observe that the Lagrange multiplier is given by the derivative of Willmore energy with respect to the conformal class. These two properties provide sufficient information to charaterize the {\em possible} limit Lagrange multipliers $\;(\alpha^b, \beta^b)\;$ for a family of constrained Willmore minimizers converging to a homogenous torus $\;f^b,\;$ which we compute in the third section. In the fourth and fifth section we proof the Main Theorem \ref{MainTheorem}. Candidate surfaces $f_{(a,b)}$ parametrized by their conformal class $\;(a,b),\;$ with $\;b \sim 1,\;$ $b \neq1\;$ and $\;a \sim_b 0^+\;$ have been constructed in \cite{HelNdi2} satisfying

\begin{itemize}
\item $f_{(0,b)} = f^b\;$ is homogenous,
\item $f_{(a,b)}\;$ is non degenerate for $\;a \neq0,\;$ and $\;f_{(a,b)} \longrightarrow f^b\;$ smoothly as $\;a \longrightarrow 0$, 
\item for every $\;b \sim 1,\;$ $b \neq 1\;$ fixed and $\;a\neq 0,\;$ the corresponding Lagrange multipliers $\;\alpha_{(a,b)},\;$ and $\;\beta_{(a,b)}\;$ satisfy

$$\alpha_{(a,b)} \nearrow \alpha^b \quad \text{ and } \quad  \beta_{(a,b)}\longrightarrow \beta^b, \quad \text{ as } a \longrightarrow 0.$$

\end{itemize}

This family is in fact real analytic for $\;a >0\;$ and $\;\alpha_{(a,b)}\;$ is shown to be monotonically decreasing in $\;a.\;$ {\footnote{We can assume without loss of generality that $\; a\geq 0.\;$ The choice of a sign correspond to the choice of an orientation on the surface and is equivalent to choosing $\;\delta^2 \bigPi^1(f_{(0,b)}) >0.\;$}} \\

Thus the proof consists of \ref{MainTheorem} consists of two steps
\begin{enumerate}
\item {\bf Classification}: \\
We classify all solutions  $f$ of the constrained Euler-Lagrange equation satisfying
\begin{itemize}
\item $f\;$ is close to a stable\footnote{By stability we mean that $\;\delta^2\mathcal W_{\beta^b}  > 0\;$ up to invariance.} homogenous torus $\;f^b$ ( $b \neq1 $ ) in $\;W^{4,2}\;$  
\item its Lagrange multiplier $\;(\alpha, \beta)\;$ is close to $\;(\alpha^b, \beta^b)\;$ and $\;\alpha \leq \alpha^b$ 
\end{itemize}
via implicit function theorem and bifurcation theory. For $\;b \sim1,\;$ $b \neq 1\;$ fixed  we obtain a unique branch of such solutions $\;f{(a,b)}\;$ parametrized by its conformal type which therefore must coincide with the family of candidate surfaces $\;f_{(a,b)}.$  \\

\item {\bf Global to Local}: \\
We show the existence of constrained Willmore minimizers $\;f^{(a,b)}\;$ with conformal structure $\;(a,b)\;$ with $\;b \sim 1,\;$ $b \neq 1\;$  and $\;a \sim_b0^+\;$ such that their Lagrange multipliers $\;\alpha^{(a,b)}\;$ converge (up to a zero set) to $\;\alpha^b\;$ as $\;a \longrightarrow 0\;$ (and as the surfaces ``converge" to $\;f^b$). Thus  for $\;b\sim 1,\;$ $b \neq 1\;$ fixed  these abstract minimizers can be identified for almost every $\;a\sim_b 0^+\;$ to coincide with the family $\;f{(a,b)} = f_{(a,b)}.\;$ By continuity of the minimal Willmore energy $\;\omega(a,b)\;$ \cite{KuwertSchätzle} (and by the regularity of the candidates) we then obtain that the candidate surfaces $\;f_{(a,b)}\;$ minimize for every $\;a \sim_b 0^+.$
\end{enumerate}

\subsection*{Acknowledgments:} We want to thank Prof. Dr. Franz Pedit and Prof. Dr. Reiner Sch\"atzle for bringing our attention to the topic of this paper and for helpful discussions. We would also like to thank Dr. Nicholas Schmitt for supporting our theoretical work through computer experiments that helped finding the right properties for candidates surfaces to be minimizer and for producing the images used here. Moreover, we thank Dr. Sebastian Heller for helpful discussions.

\section{Strategy and main observations}\label{strategy}

In this section we state key ideas and the strategy for the proof of the Main Theorem (Theorem \ref{MainTheorem}).  We follow the notations used in \cite{KuwertLorenz}. \\

The Teichm\"uller space of tori can be identified with the upper half plane $\;\H^2.\;$ Thus let 

$$  \bigPi(f) = \big(  \bigPi^1(f),   \bigPi^2(f)\big)$$ 

be the projection map of an immersion $\;f: T^2 \longrightarrow S^3\;$ to $\;\H^2\;$ such that the Clifford torus 

$$f^{1} :  T^2_{1} =  \C /\big(\sqrt{2}\pi\Z + \sqrt{2}\pi i \Z\big) \longrightarrow S^3 \subset \C^2$$ 

parametrized by 

$$f^1(x,y) = \frac{1}{\sqrt {2}}\begin{pmatrix} e^{\tfrac{ix}{\sqrt{2}}}, & e^{\tfrac{iy}{\sqrt{2}}} \end{pmatrix}$$

is mapped to $  \bigPi (f^1) = (0,1).$ Then we can write the Euler-Lagrange equation for a constrained Willmore torus as
 
\begin{equation}\label{ELana}\delta \mathcal W = < \omega, \mathring{\text{II}}> = \alpha\delta  \bigPi^1 + \beta \delta  \bigPi^2,
\end{equation}

with Lagrange multipliers $\;\alpha\;$ and $\;\beta.\;$ The surface is non-isothermic if and only if the Lagrange multipliers are uniquely determined (after choosing a base in $\;\H^2$). At the Clifford torus, and more generally, at homogenous tori we have $\;\delta   \bigPi^1 = 0\;$ and thus the $\;\alpha-$Lagrange multiplier can be arbitrarily chosen. As already discussed before, it is well known that there exist various families of (non-isothermic) constrained Willmore tori deforming a homogenous torus. These families can be distinguished by the limit of their $\;\alpha$-Lagrange multiplier as they converge smoothly to the homogenous torus. The obstructions  for such families to exist and how these limit Lagrange multipliers relate to their Willmore energy is summarized in the following Lemma. Though the proof of the Lemma \ref{mainobservation} is trivial, these observations give the main intuition for the dependence of the minimum Willmore energy on the conformal classes. \\

\begin{Lem}[Main observation]\label{mainobservation}$\ $\\
Let $\;\tilde f^{(a,b)}\;$ be a family of smooth immersions with conformal type 

$${(a,b) =: (\tilde a^2, b) \in [0, a^2_0) \times (1-b_0, 1+b_0)}$$ 

for some positive numbers  $\;a_0,\; b_0 \in \R\;$ such that the map

$$(\tilde a,b) \longmapsto \tilde f^{(a,b)} \in C^2\left(\left[0, a_0\right) \times (1-b_0, 1+b_0), W^{4,2}\right),$$

and  $\;\delta  \bigPi^1 \big({\tilde f^{( a,b)}}\big ) = 0,\;$ but $\;\delta   \bigPi^1 \big({\tilde f^{( a,b)}}\big ) \neq 0\;$ for $\;a \neq 0.\;$  Further, let $\;\tilde \alpha( a,b)\;$ and $\;\tilde \beta( a,b)\;$ be the corresponding Lagrange multipliers  satisfying 

$$(\tilde a,b) \longmapsto \tilde \alpha(a,b), \tilde \beta(a,b) \in C^2\left([0, a_0) \times (1-b_0, 1+b_0), W^{4,2}\right),$$

and 
$\;\tilde \omega( a, b):= \mathcal W\big({\tilde f^{( a,b)}}\big ).\;$ Then we obtain

\begin{enumerate}
\item$$\frac{\partial \tilde \omega( a,b)}{\partial a} = \tilde \alpha(a,b) \text{ for } a \neq 0 \quad \text{ and } \quad \lim_{\tilde a\rightarrow 0} \frac{\partial \tilde \omega(a,b)}{\partial  a}  = \tilde\alpha(0,b) =: \tilde\alpha^b \quad \forall b,$$

\item$$\frac{\partial \tilde \omega( a,b)}{\partial b} = \tilde\beta( a,b) \text{ for } a\neq 0 \quad \text{ and } \quad \lim_{ a\rightarrow 0} \frac{\partial \tilde \omega( a,b)}{\partial b}  = \tilde\beta(0,b)  =: \tilde \beta^b\quad \forall b,$$

\item
$\varphi^b:= \del_{\tilde a} f^{( a, b)}|_{a=0}$ satisfies 

$$ \delta^2\left(\mathcal W_{\tilde \alpha^b, \tilde \beta^b} \right)\big({\tilde f^{( 0,b)}}\big )(\varphi^b, \varphi^b) = 0 \quad \forall b.$$

\end{enumerate}
\end{Lem}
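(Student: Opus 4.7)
All three statements follow by differentiating the constrained Euler-Lagrange equation \eqref{ELana} along the family $\;\tilde f^{(a,b)}\;$ and using that, by construction, $\;\bigPi^1(\tilde f^{(a,b)}) = a = \tilde a^2\;$ and $\;\bigPi^2(\tilde f^{(a,b)}) = b.\;$ The one subtle point is the square-root parametrization of the first coordinate; this is what makes $\;\partial_a \tilde\omega\;$ remain bounded as $\;a \to 0,\;$ and it also forces $\;\delta \bigPi^1(f^b) = 0\;$ at $\;\tilde a = 0,\;$ which is precisely what makes part (3) a non-trivial constraint on $\;\varphi^b.\;$

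For (1) the chain rule gives
\begin{equation*}
\frac{\partial \tilde\omega(a,b)}{\partial a} \;=\; \frac{1}{2\tilde a}\,\delta \mathcal{W}(\tilde f^{(a,b)})\bigl(\partial_{\tilde a}\tilde f^{(a,b)}\bigr)
\end{equation*}
for $\;\tilde a > 0.\;$ Plugging \eqref{ELana} into the right hand side and using $\;\partial_{\tilde a}\bigPi^1(\tilde f^{(a,b)}) = 2\tilde a\;$ together with $\;\partial_{\tilde a}\bigPi^2(\tilde f^{(a,b)}) = 0\;$ cancels the $\;\tfrac{1}{2\tilde a}\;$ factor and leaves $\;\partial_a \tilde\omega = \tilde\alpha(a,b);\;$ continuity of $\;\tilde\alpha\;$ at $\;\tilde a = 0\;$ supplies the claimed limit. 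Statement (2) is analogous: differentiating $\;\tilde\omega\;$ in $\;b\;$ directly and applying \eqref{ELana}, one reads off $\;\tilde\beta(a,b)\;$ from $\;\partial_b\bigPi^1 \equiv 0\;$ and $\;\partial_b\bigPi^2 \equiv 1.\;$

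For (3) consider the one-form on immersion space
\begin{equation*}
G(\tilde a, b) \;:=\; \delta \mathcal{W}(\tilde f^{(a,b)}) \;-\; \tilde\alpha(a,b)\,\delta\bigPi^1(\tilde f^{(a,b)}) \;-\; \tilde\beta(a,b)\,\delta\bigPi^2(\tilde f^{(a,b)}),
\end{equation*}
which vanishes identically by \eqref{ELana}. Applying $\;\partial_{\tilde a}|_{\tilde a = 0}\;$ and evaluating on $\;\varphi^b\;$ produces the second variations $\;\delta^2\mathcal{W},\;\delta^2\bigPi^1,\;\delta^2\bigPi^2\;$ each paired with $\;(\varphi^b,\varphi^b),\;$ together with two cross terms coming from differentiating the Lagrange multipliers. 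The cross term proportional to $\;\partial_{\tilde a}\tilde\alpha(0,b)\;$ vanishes by the isothermic degeneracy $\;\delta\bigPi^1(f^b) = 0,\;$ and the cross term proportional to $\;\partial_{\tilde a}\tilde\beta(0,b)\;$ vanishes because $\;\delta\bigPi^2(f^b)(\varphi^b) = \partial_{\tilde a}|_0 b = 0;\;$ what remains is exactly $\;\delta^2 \mathcal{W}_{\tilde\alpha^b, \tilde\beta^b}(f^b)(\varphi^b, \varphi^b) = 0.\;$ There is no serious analytic obstacle beyond this bookkeeping; the content of the lemma is interpretative, in that (1) and (2) identify the Lagrange multipliers with the partial derivatives of the minimum Willmore energy on Teichm\"uller space, while (3) constrains any candidate limit Lagrange multiplier $\;(\tilde\alpha^b, \tilde\beta^b)\;$ to lie on the boundary of the positive-semidefiniteness region of $\;\delta^2\mathcal{W}_{\alpha,\beta}(f^b),\;$ the rigidity exploited in the following sections to pin down $\;\tilde\alpha^b = \alpha^b.\;$
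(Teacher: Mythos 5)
Your argument is correct and, in substance, coincides with the paper's: differentiate the constrained Euler--Lagrange equation \eqref{ELana} along the family, use $\bigPi(\tilde f^{(a,b)}) = (a,b)$ to read off the Lagrange multipliers as partials of $\tilde\omega$, and differentiate once more for (3). The one cosmetic difference is in (3): the paper differentiates in $a$ (so $\varphi = \partial_a \tilde f$ blows up as $a \to 0$) and must then multiply through by $a$, replacing $\varphi$ by $\sqrt{a}\,\varphi$ to pass to the limit; you instead differentiate directly in the regular parameter $\tilde a$, which avoids the rescaling step and makes the vanishing of the two cross terms ($\partial_{\tilde a}\tilde\alpha \cdot \delta\bigPi^1(f^b)(\varphi^b) = 0$ from the hypothesis $\delta\bigPi^1(\tilde f^{(0,b)}) = 0$, and $\partial_{\tilde a}\tilde\beta \cdot \delta\bigPi^2(f^b)(\varphi^b) = 0$ from $\bigPi^2 \equiv b$) slightly more transparent.
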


\begin{proof} The proof only uses the definition of the family, the constrained Euler-Lagrange equation and its derivatives. By assumption we have that $\;\del^k_{\tilde a}\del_b^l \tilde f^{(a, b)}\;$ exist and is continuous on 
$$[0, a_0) \times (1-b_0, 1+b_0) \quad \text{for}\quad k,l = 0,1,2.$$ 
Since $\;\del_{\tilde a} = 2\sqrt{a}\del_a\;$ for $\;a \neq 0\;$ we have that $\;\del_{ a} \tilde f^{( a, b)}\;$ exist for $\;a \neq 0\;$ but $\;\lim_{a\rightarrow 0}\del_{ a} \tilde f^{( a, b)}\;$ cannot exist due to the degeneracy of $\;f^{(0,b)}.$
\begin{enumerate}
\item Let $\;\varphi:= \frac{\partial \tilde f^{(a,b)}}{\partial  a}$ for $a\neq 0.\;$ 
Then $\;\frac{\partial \tilde \omega( a,b)}{\partial a} = \delta \mathcal W\big({\tilde f^{(a,b)}}\big ) (\varphi)\;$ for $\;a\neq 0\;$ and hence by the constrained Euler-Lagrange equation we have:

$$\frac{\partial \tilde \omega( a,b)}{\partial  a}= \tilde \alpha ( a,b) \delta \bigPi^1 \big({\tilde f^{( a,b)}}\big )(\varphi) + \tilde \beta( a,b) \delta \bigPi^2 \big({\tilde f^{( a,b)}}\big )(\varphi), \quad \text{ for } a\neq 0.$$
\noindent
Since  $\;\bigPi(\tilde f^{(a,b)}) = ( a,b),\;$ we obtain for $\; a\neq 0\;$ that 

\begin{equation}\label{Piab}
\delta \bigPi^1 \big({\tilde f^{( a,b)}}\big )(\varphi) = 1 \quad \text{and}\quad \delta \bigPi^2 \big({\tilde f^{( a,b)}}\big )(\varphi) = 0
\end{equation}

 and therefore 
 
 \begin{equation*}
\frac{\partial \tilde \omega( a,b)}{\partial  a} = \tilde \alpha(a,b), \quad a \neq 0.
\end{equation*}

\noindent
Passing to the limit gives the first assertion.\\
\item This follows completely analogously to (1).\\
\item In this case we test the Euler-Lagrange equation by $\;\varphi\;$ and obtain for $\;a \neq 0\;$

$$ \delta \mathcal W \big({\tilde f^{( a,b)}}\big ) (\varphi) = \tilde \alpha ( a,b) \delta \bigPi^1 \big({\tilde f^{( a,b)}}\big )(\varphi) + \tilde \beta(a,b) \delta \bigPi^2 \big({\tilde f^{( a,b)}}\big )(\varphi).$$

\noindent
Now differentiating this equation with respect to $\; a\;$ yields

\begin{equation*}
\begin{split}
 \delta^2 \mathcal W\big({\tilde f^{( a,b)}}\big )(\varphi, \varphi) &= \tilde \alpha (a,b) \delta^2 \bigPi^1\big({\tilde f^{( a,b)}}\big )(\varphi, \varphi) + \tilde \beta( a,b) \delta^2 \bigPi^2\big({\tilde f^{( a,b)}}\big )(\varphi, \varphi)\\ &+ \frac{\partial \tilde \alpha( a, b)}{\partial  a} \delta \bigPi^1\big({\tilde f^{( a,b)}}\big )(\varphi) + \frac{\partial \tilde \beta( a, b)}{\partial  a} \delta \bigPi^2\big({\tilde f^{( a,b)}}\big )(\varphi) \quad \text{ for } a\neq 0.
 \end{split}
 \end{equation*}

\noindent
In order to pass to the limit, it is necessary to replace $\;\varphi\;$ by $\;\sqrt{ a} \varphi.\;$ This gives

\begin{equation*}
\begin{split}
 \delta^2 \mathcal W\big({\tilde f^{( a,b)}}\big )(\sqrt{a}\varphi, \sqrt{a}\varphi) &= \tilde\alpha (a,b) \delta^2 \bigPi^1\big({\tilde f^{( a,b)}}\big )(\sqrt{a}\varphi, \sqrt{a}\varphi)\\&+\tilde \beta(a,b) \delta^2 \bigPi^2\big({\tilde f^{( a,b)}}\big )(\sqrt{a}\varphi, \sqrt{a}\varphi)\\ &+ \sqrt{a}\frac{\partial \tilde \alpha(a, b)}{\partial a} \delta \bigPi^1\big({\tilde f^{( a,b)}}\big )(\sqrt{a}\varphi) + \sqrt{a}\frac{\partial \tilde \beta(a, b)}{\partial a} \delta \bigPi^2\big({\tilde f^{( a,b)}}\big )(\sqrt{a}\varphi) .
 \end{split}
 \end{equation*}
 
 \noindent
By assumption we have

$$\lim_{\tilde a \rightarrow 0}2\sqrt{a}\frac{\partial \tilde \alpha(a, b)}{\partial a} = \lim_{\tilde a \rightarrow 0}\frac{\partial \tilde \alpha(a, b)}{\partial \tilde a} \text{ and  }\lim_{\tilde a \rightarrow 0}2\sqrt{a}\frac{\partial \tilde \beta(a, b)}{\partial a} = \lim_{\tilde a \rightarrow 0}\frac{\partial \tilde \beta(a, b)}{\partial \tilde a}$$

\noindent
exist and moreover,  $\;\lim_{\tilde a\rightarrow 0}\delta \bigPi^1\big({\tilde f^{( a,b)}}\big )(\sqrt{a} \varphi) = 0\;$ and $\;\delta \bigPi^2 \big({\tilde f^{( a,b)}}\big )(\sqrt{a}\varphi) = 0\;$ as in \eqref{Piab}. Therefore we obtain for $\;\tilde a\longrightarrow 0$

$$\delta^2 \left(\mathcal W_{\tilde \alpha^b, \tilde \beta^b}\right )\big({\tilde f^{( 0,b)}}\big ) (\varphi^b, \varphi^b) = 0.$$

\end{enumerate}
\end{proof}

\begin{Rem}
For any family $\;\tilde f^{(a,b)}\;$ the quantities used and computed in the above lemma only depends the normal part of the variation $\;\varphi\;$ and $\;\varphi^b.\;$ We will denote these normal variations again by $\;\varphi\;$ and $\;\varphi^b\;$ in the following. 
\end{Rem}

The first assertion of the lemma states that for a any family of constrained Willmore tori $\;\tilde f^{(a,b)},\;$ with the properties as in the Lemma, their Lagrange multipliers corresponds to the derivative of the Willmore energy $\;\tilde \omega(a,b).\;$ At $\;a=0\;$ and for $\;b \sim 1\;$ fixed we have by \cite{NdiayeSchätzle1} that the homogenous torus $\;f^b\;$ is the unique constrained Willmore minimizer. This suggests that the Lagrange multipliers $\;\tilde \alpha(a,b)\;$ of a family $\;\tilde f^{(a,b)}\;$ of putative constrained Willmore minimizers with $\;\tilde f^{(0,b)} = f^b\;$ should have the the smallest possible limit $\;\tilde \alpha^b\;$ as $\;a \longrightarrow 0.\;$ A necessary (and as we will later see a sufficient) condition for such a family to exist is given by the second statement of Lemma \ref{mainobservation}, namely the degeneracy of the second variation of the penalized Willmore functional $\;\mathcal W_{\tilde \alpha^b, \beta^b}.$ \\

\begin{Rem}\label{betab}
The limit Lagrange multiplier $\;\beta^b \;$ is uniquely determined as the $\;\beta$-Lagrange multiplier of the homogenous torus $\;f^b\;$ due to the non-degeneracy of the $\;\bigPi^2$-direction.
The discussion above suggest that the first step towards the proof of the main Theorem, Theorem \ref{MainTheorem}, is to determine

$$\alpha^b=\text{ max }\big\{ \;\alpha \ | \ \  \delta^2 \mathcal W_{\alpha, \beta^b}\big(f^b\big) \geq0 \; \big\}.$$
\end{Rem}

\noindent
It is well known that the Clifford torus, and thus all homogenous tori smoothly close to the Clifford torus, is strictly stable (up to invariance). Therefore $\;\alpha^b\;$ is strictly positive by fixing an orientation, i.e., $\;\bigPi^1(f)\geq 0.\;$ We will compute in the next section that it is also finite. Further, since we show in Proposition \ref{1d} that the kernel of $\;\delta^2\mathcal W_{\alpha^b,\beta^b}\big(f^b\big)\;$ is $1$-dimensional for $\;b \sim 1\;$ and $\;b \neq 1\;$ (up to invariance), the third statement of Lemma \ref{mainobservation} implies that this kernel determines the normal variation of the candidate family $\;f_{(a,b)}\;$ up to reparametrization. Moreover, the normal variation $\;\varphi^b\in \delta^2 \mathcal W_{\alpha^b, \beta^b}\big({f^b}\big)\;$ for $\;b \neq1\;$ is computed to have (intrinsic) period one and independent of the $y$-direction (see Section \ref{stability}) of a reparametrized homogenous torus. More precisely, for
 
$$T^2_{b}:= \C /\left (2\pi \Z \oplus 2 \pi \frac{r^2+ 2irs}{r^2+4s^2} \Z \right) $$

we consider the homogenous torus $\;f^b\;$ parametrized as an $(1,2)$-equivariant surface\footnote{Equivariant surfaces are those with a $1$-parameter family of isometric symmetries, we discuss these surfaces in \cite{He1, HelNdi2}. The $\;T^2_b\;$ used in the definition is biholomorphic to $\;T^2_b  = \C/\left (2 \pi r \Z \oplus 2\pi s i \Z\right).\;$ We state the immersion here with this lattice to emphasize that it is $(1,2)$-parametrized.}

\begin{equation}\label{12fb}
\begin{split}
f^b: \ \ T^2_{b} &\longrightarrow S^3, \\
(x,y) &\longmapsto \begin{pmatrix}r e^{i \left(y+2\tfrac{s}{r}x\right)}, & s e^{i\left (2y-\tfrac{r}{s}x\right)}\end{pmatrix}.
\end{split}
\end{equation}

The independence of $\;\varphi^b\;$ w.r.t. the $y$-direction means that  the corresponding family $\; f_{(a,b)}\;$ (with the properties of Lemma \ref{mainobservation}) are infinitesimally $(1,2)$-equivariant. Furthermore, in our case knowing the limit Lagrange multiplier $\;\alpha^b\;$ is tantamount to knowing the normal variation $\;\varphi^b,\;$ since $\;\mathcal W_{\alpha^b, \beta^b}\;$ is linear in $\;\alpha^b$. \\

For $\;\alpha\in [0,\alpha^b)\;$ the second variation $\;\delta^2 \mathcal W_{\alpha, \beta^b}(f^b)\;$ is strictly positive (up to invariance), thus $2$-dimensional families deforming the homogenous tori smoothly with

 $$\lim_{a\rightarrow0} \alpha(a,b) = \alpha$$ 
 
 cannot exist. Indeed, the following Lemma shows that this is even true in $\;W^{4,2}$-topology. It can be proven by using the same arguments as in \cite{NdiayeSchätzle1}.\\

\begin{Lem}\label{alpha<alpha^b}For $\;b \sim 1\;$ fixed and $\;\alpha^b\;$ defined as in Definition \ref{alphab} let $\;\alpha \in \R \;$ with $\;\alpha<  \alpha^b.\;$ Then the homogenous tori $\;f^b\;$ is the unique solution (up to invariance) of the equation 

$$\delta \mathcal W (f) = \alpha_f \delta \bigPi^1(f) + \beta^b\delta \bigPi^2(f)$$

with $\;\alpha_f \sim \alpha\;$ and $\;f \sim f^b\;$ in $vW^{4,2}$, $\;\bigPi^1(f) \geq 0\;$ and $\;\bigPi^2(f) = b.$
 
\end{Lem}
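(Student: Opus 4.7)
The plan is to apply the implicit function theorem to the penalized Euler-Lagrange map, viewed as a function of the immersion $\;f\;$ with $\;\alpha\;$ treated as a parameter. The crucial starting point is the isothermicity of $\;f^b,\;$ which gives $\;\delta \bigPi^1(f^b)=0\;$ and hence
\begin{equation*}
\delta \mathcal{W}_{\alpha,\beta^b}(f^b) \;=\; \delta \mathcal{W}(f^b) - \beta^b\,\delta \bigPi^2(f^b) \;=\; 0 \qquad \text{for every } \alpha \in \R .
\end{equation*}
Thus $\;f^b\;$ is already a solution of the penalized equation for every value of $\;\alpha,\;$ and the lemma reduces to showing that, in a $\;W^{4,2}$-neighborhood and modulo invariance, it is the only one.

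Following the blueprint of \cite{NdiayeSch�tzle1}, I would first fix a slice $\;\mathcal{S}\;$ through $\;f^b\;$ transverse to the action of the Möbius group of $\;S^3\;$ and to reparametrizations, so that working on $\;\mathcal{S}\;$ is the same as working modulo invariance. On $\;\mathcal{S}\;$ define
\begin{equation*}
F : \mathcal{S} \times \R \longrightarrow (T_{f^b}\mathcal{S})^{\ast}, \qquad F(f,\tilde{\alpha}) := \delta \mathcal{W}_{\tilde{\alpha},\beta^b}(f)\big|_{T_f \mathcal{S}} .
\end{equation*}
For $\;\tilde{\alpha}\;$ close to $\;\alpha,\;$ which by the strict inequality $\;\alpha<\alpha^b\;$ still satisfies $\;\tilde{\alpha}<\alpha^b,\;$ the partial derivative $\;D_{f}F(f^b,\tilde{\alpha}) = \delta^{2}\mathcal{W}_{\tilde{\alpha},\beta^b}(f^b)\;$ is strictly positive on $\;T_{f^b}\mathcal{S}\;$ by Definition \ref{alphab}. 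Combined with the ellipticity of the linearized Willmore operator and standard Schauder/Sobolev estimates, this upgrades to an isomorphism in the $\;W^{4,2}$-based setting. The implicit function theorem applied to $\;F(\cdot,\tilde{\alpha})=0\;$ then produces a unique zero in a $\;W^{4,2}$-neighborhood of $\;f^b,\;$ which must therefore equal $\;f^b.\;$ Letting $\;\tilde{\alpha}\;$ range over a neighborhood of $\;\alpha\;$ covers all admissible $\;\alpha_f\sim \alpha;\;$ the constraint $\;\bigPi^2(f)=b\;$ is encoded in the choice of $\;\beta^b,\;$ and $\;\bigPi^1(f)\geq 0\;$ is automatic at $\;f^b.\;$

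The main obstacle is technical: constructing the transverse slice $\;\mathcal{S}\;$ through the \emph{degenerate} isothermic surface $\;f^b\;$ and converting mere coercivity of $\;\delta^2 \mathcal{W}_{\tilde{\alpha},\beta^b}(f^b)\;$ into an operator-level isomorphism between the correct $\;W^{4,2}$-type spaces. This is precisely where the strict inequality $\;\alpha<\alpha^b\;$ is essential: at $\;\alpha=\alpha^b\;$ the one-dimensional kernel identified in Proposition \ref{1d} would obstruct the inversion and the bifurcating family $\;f_{(a,b)}\;$ would emerge. Since the corresponding slice construction, the regularity theory and the attendant Schauder-type estimates have been carried out in detail in \cite{NdiayeSch�tzle1} in the rectangular regime, they transfer essentially verbatim to the present non-rectangular neighborhood of $\;f^b\;$ once $\;\alpha^b\;$ and $\;\beta^b\;$ have been identified.
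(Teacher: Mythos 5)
Your proposal is correct and follows the same route the paper intends: there is no written proof of Lemma \ref{alpha<alpha^b} in the text --- it is declared to follow ``by using the same arguments as in \cite{NdiayeSch�tzle1},'' i.e.\ precisely the implicit-function-theorem argument on a M\"obius- and reparametrization-transverse slice that you set up, using that $\delta\bigPi^1(f^b)=0$ makes $f^b$ a solution for every $\alpha$ and that strict positivity of the second variation for $\alpha<\alpha^b$ yields invertibility of the linearization. The only small imprecision is pinning the strict positivity of $\delta^2\mathcal W_{\tilde\alpha,\beta^b}(f^b)$ on Definition \ref{alphab} alone: that definition only gives non-negativity up to $\alpha^b$, and strictness for $\alpha<\alpha^b$ further uses that $\delta^2\mathcal W_{\alpha,\beta^b}(f^b)$ is affine in $\alpha$, that the kernel at $\alpha=\alpha^b$ is exactly $\langle\varphi^b,\tilde\varphi^b\rangle$ plus invariances (Section \ref{stability}), and that $\delta^2\bigPi^1(f^b)(\varphi^b,\varphi^b)>0$, so that lowering $\alpha$ tilts those kernel directions strictly positive while the remaining spectrum stays bounded away from zero.
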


At $\;\alpha= \alpha^b\;$ (and $\;b \sim1\;$, $b \neq1$) the situation is very different. Using Integrable Systems Theory we can construct a  family of $(1,2)$-equivariant constrained Willmore tori $\;f_{(a,b)}\;$ parametrized by their conformal type $\;(a, b) \sim (0,b)\;$ deforming smoothly the homogenous torus $\;f^b = f_{(0,b)}\;$ such that the corresponding Lagrange multipliers $\;\alpha_{(a,b)} \nearrow \alpha^b\;$ converge from below as $\;a \longrightarrow 0.\;$ In fact, we prove even more in \cite{HelNdi2}.\\

\begin{The}[\cite{HelNdi2}]
For $\;b = \tfrac{s}{r} \sim 1,\;$ with $\;r^2 + s^2 = 1\;$ and $\;b \neq 1\;$ fixed there exists for $\;a \sim_b 0^+\;$ a family of $(1,2)$-equivariant constrained Willmore immersions

 $$f_{(a,b)} :  T^2_{(a,b)}:= \C /2\pi r\big( \Z \oplus (a + ib) \Z\big) \longrightarrow S^3$$
 
 such that 
 
 $$(\sqrt{a}, b) \longmapsto f_{(a,b)} \in C^\omega \Big((0, a_0)\times[1, 1+ b_0), C^\infty_{\text{Imm}} \Big ) \cap C^2 \Big([0, a_0]\times[1, 1+ b_0), C^\infty_{\text{Imm}}\Big),$$

 where $\;C^\infty_{\text{Imm}}\; $ is the space of smooth immersions from a torus into $\;S^3\;$ and $\;C^\omega\;$ denote the space of real analytic maps. Moreover,
 
 $$(a, b)\longmapsto \mathcal W\big(f_{(a,b)}\big)  \in C^2 \Big([0, a_0]\times[1, 1+ b_0)\Big)$$
 
satisfy the following 

\begin{enumerate} 

\item $\text{For all } \;b\sim1, b\neq1 \text{ fixed}, \  f_{(a,b)}\;$ converge smoothly to the homogenous torus $\;f^b\;$ as $\;a \longrightarrow 0\;$ given by

$$f^b: T^2_{b} = T^2_{(0,b)} \longrightarrow S^3, \quad (x,y) \longmapsto \begin{pmatrix}r e^{i\left(2y+\tfrac{s}{r}x\right)}, & s e^{i\left(y-2\tfrac{r}{s}x\right)}\end{pmatrix}.$$ \\

\item The immersions $\;f_{(a,b)}\;$ are non-degenerate for $\;a \neq0\;$  and satisfy

 $$\delta \mathcal W(f_{(a,b)}) = \alpha_{(a,b)}\delta \bigPi^1 + \beta_{(a,b)}\delta \bigPi^2 \quad \text{ for } a \neq 0$$
 
 \noindent
with Lagrange multipliers $\;\big(\alpha_{(a,b)}, \beta_{(a,b)}\big)\;$ such that $\;\alpha_{(a,b)}\nearrow \alpha^b\;$ monotonically and $ \;\beta_{(a,b)} \longrightarrow \beta^b\;$ as $\;a \longrightarrow 0.$ \\

\end{enumerate}

\end{The}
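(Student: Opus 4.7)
The plan is to construct the family $f_{(a,b)}$ by integrable systems methods, viewing the homogenous torus $f^b$ as a spectral genus zero degeneration of $(1,2)$-equivariant constrained Willmore tori of spectral genus one. First, I would impose the $(1,2)$-equivariant ansatz, reducing the constrained Willmore PDE on $T^2$ to a system of ODEs along a profile curve; the homogenous torus corresponds to a constant profile and candidates correspond to small non-trivial perturbations of this profile.

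Second, I would use the spectral curve description of constrained Willmore tori of Bohle \cite{Bohle} and Bohle-Leschke-Pedit-Pinkall \cite{BohLesPedPin}, together with the equivariance, to produce a one-complex-parameter family of hyperelliptic spectral genus one data that degenerates to the spectral genus zero data of $f^b$. Finite-gap integration of these data yields a conformal immersion of a cylinder; the remaining step is to impose closing conditions so that the image is a genuinely immersed torus and then to identify the resulting conformal class with $(a,b)$.

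Third, I would solve the closing conditions by the implicit function theorem around the homogenous limit. The relevant periods on the spectral curve are integrals of abelian differentials, so the closing system is real analytic in the spectral parameters and the conformal moduli. Since $\delta \bigPi^1(f^b) = 0$ but $\delta \bigPi^2(f^b) \neq 0$, the natural deformation parameter transverse to the $(1,2)$-equivariant family is not $a$ but $\sqrt{a}$, which is precisely the source of the real analyticity in $(\sqrt{a}, b)$ and the merely $C^2$ regularity at $a=0$ asserted in the theorem.

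Finally, the Willmore energy and both Lagrange multipliers can be written as explicit period integrals on the spectral curve; differentiating these along the family gives the smooth convergence $f_{(a,b)}\to f^b$ and the limits $\alpha_{(a,b)}\to \alpha^b$, $\beta_{(a,b)}\to \beta^b$. The hard part will be twofold: first, matching the first-order deformation direction produced by the spectral family with the kernel element $\varphi^b$ of $\delta^2 \mathcal{W}_{\alpha^b,\beta^b}(f^b)$ supplied by Lemma \ref{mainobservation}, which is what forces the limit multiplier to be exactly $\alpha^b$ rather than some smaller admissible value; and second, upgrading the convergence $\alpha_{(a,b)}\to \alpha^b$ to the strict monotonicity $\alpha_{(a,b)}\nearrow \alpha^b$, which requires a quantitative second-order expansion in $\sqrt{a}$ of the period integrals that represent $\alpha_{(a,b)}$.
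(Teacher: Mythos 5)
This theorem is stated in the paper with an attribution to \cite{HelNdi2} and is \emph{not} proved here; it is imported wholesale from that companion preprint, so there is no in-paper argument to compare your sketch against. That said, the paper's surrounding discussion (``Using Integrable Systems Theory we can construct a family of $(1,2)$-equivariant constrained Willmore tori $f_{(a,b)}$\,\dots'', the remark that the minimizers lie in the associated family of constrained Willmore Hopf tori, and the references to \cite{He1,He2,KilianSchmidtSchmitt1,Bohle,BohLesPedPin}) confirms that your high-level outline --- equivariant reduction to a profile-curve ODE, spectral-genus-one data degenerating to the spectral-genus-zero data of $f^b$, closing conditions solved by an implicit function theorem in the spectral parameters with $\sqrt a$ as the natural variable --- is the right skeleton of what \cite{HelNdi2} does, and your explanation of why the regularity is $C^\omega$ in $(\sqrt a,b)$ off $a=0$ but only $C^2$ up to $a=0$ is exactly the expected mechanism.

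What you have, however, is a strategy rather than a proof, and the two points you flag as ``hard parts'' are precisely the ones the rest of the paper leans on and that cannot be left open. First, showing that the limit multiplier is \emph{exactly} $\alpha^b$ requires more than matching the limit deformation direction with an element of $\text{Ker}\,\delta^2\mathcal W_{\alpha^b,\beta^b}(f^b)$: Lemma \ref{mainobservation}(3) only says that, for any admissible $C^2$ family, the limit direction lies in the kernel of the second variation at \emph{whatever} the limit multiplier is, so you still need to verify independently that the spectral construction realizes the specific $(1,2)$-mode $\varphi^b$ computed in Section \ref{stability}, i.e.\ that its limit multiplier is the maximal admissible value $\alpha^b$ from Definition \ref{alphab} rather than some smaller one. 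Second, the monotone convergence $\alpha_{(a,b)}\nearrow\alpha^b$ together with the finer facts $\lim_{\tilde a\to 0}\partial_{\tilde a}\alpha_{(a,b)}=0$ and $\lim_{\tilde a\to 0}\partial^2_{\tilde a\tilde a}\alpha_{(a,b)}<0$ (which the present paper quotes from \cite[Lemma 2.2]{HelNdi2} inside the proof of Lemma \ref{n-thderivative}) demand an explicit second-order asymptotic expansion of the relevant period integrals near the spectral-curve degeneration; your sketch acknowledges the need but does not produce it. Since the bifurcation argument of Theorem \ref{cwm} hinges exactly on these facts, this is the gap that would have to be closed to turn your proposal into a proof.
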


\begin{Rem}
The candidates are constructed as conformal immersions from $\;T^2_{(a,b)}\;$ to $\;S^3.\;$ Since $\;T^2_{(a,b)}\;$ is $\;C^\infty$-diffeomorphic to $\;T^2_b,\;$ the space $\;C^\infty_{Imm}\big(T^2_{(a,b)}\big) \;$ is canonically isomorphic to $\; C^\infty_{Imm}\big(T^2_b\big),\;$ i.e., it does not depend on the conformal type of the domain.  By the convergence of $\;f_{(a,b)}\;$ to $\;f^b\;$ we mean the convergence of the maps under this identification $\;C^\infty_{Imm}\big(T^2_{(a,b)}\big) \cong C^\infty_{Imm}\big(T^2_b\big).$\\
\end{Rem}

\begin{Rem}\label{ab}
By Lemma \ref{mainobservation} we obtain that

$$\left(\partial_{\sqrt{a}} f_{(a,b)}|_{a= 0} \right)^\perp =: \varphi^b \in \text{Ker}\left(\delta^2 \mathcal W_{\alpha^b, \beta^b}\right)\big(f^b\big).$$

Moreover Lemma \ref{mainobservation} also implies that for $\;b \sim 1,\;$ $b \neq 1\;$ fixed, the map $\;a \longmapsto \mathcal W \big(f_{(a,b)}\big)\;$ is monotonically increasing and concave in $\;a \sim 0^+.\;$ Hence there exist $\;a^b>0\;$ and small such that for  all $\;a \in [0, a^b)$

\begin{equation}\label{fpmca}
\mathcal W_{\alpha^b}\big(f_{(a,b)}\big) < \mathcal W_{\alpha^b}\big(f^b \big).
\end{equation}

This means that  the homogenous tori $\;f^b\;$ cannot be the minimizer of $\;\mathcal W_{\alpha^b}\;$ among immersions $\;f\;$ with $\;0\leq \bigPi^1(f)\leq a^b\;$ and $\;\bigPi^2(f) = b$. 

\end{Rem}

At $\;f^b\;$ the second variation of $\;\mathcal W_{\alpha^b, \beta^b}\;$ is degenerate. Thus a simple application of the implicit function theorem as in \cite{NdiayeSchätzle1, NdiayeSchätzle2} to classify all solutions close to $\;f^b\;$ in $\;W^{4,2}\;$ is not possible. Instead, we use bifurcation theory from simple eigenvalues for the classification. For this we first show in Proposition \ref{1d} that the kernel of $\;\delta^2 \mathcal W_{\alpha^b, \beta^b}(f^b),\;$ for $\;b\neq 1,\;$ is only $1$-dimensional up to invariance. Then together with Lemma \ref{n-thderivative}  the following classification result is proven:\\

\begin{The}\label{classification}For $\;b \sim 1,\;$ $b\neq 1\;$ fixed and up to taking $\;a^b\;$ of Remark \ref{ab} smaller,  there exists  (up to invariance) a unique family of non-degenerate solutions $\;f(a,b)\;$ for $\;a \neq0\;$ to the constrained Euler-Lagrange equation \eqref{ELana} parametrized by their conformal type $\;(a,b)\;$ with $\;a \in [0, a^b),\;$ $f{(a,b)} \sim f^b\;$ in $\;W^{4,2}\;$ as $\;a \sim 0^+\;$ and $\;f{(0,b)} = f^b\;$ 

with its Lagrange multipliers $\;\alpha(a,b)\;$ and $\;\beta(a,b)\;$
 satisfying 
 
$$\alpha(a,b)  \nearrow \alpha^b \quad  \text{and} \quad \beta(a,b) \longrightarrow \beta^b \quad \text{as} \quad a \longrightarrow 0.$$

In particular, the only solution $\;f\;$ of the constrained Willmore equation with conformal type $\;\bigPi(f) = (0, b)$, $\;\alpha = \alpha^b\;$ and $\;\beta=\beta^b\;$ is the homogenous torus $\;f^b$.\\
\end{The}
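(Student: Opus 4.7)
The plan is to perform a Lyapunov-Schmidt reduction around the degenerate critical point $(f^b,\alpha^b,\beta^b)$ and to extract a scalar bifurcation equation whose unique solution branch coincides with the candidates $f_{(a,b)}$ of \cite{HelNdi2}. After passing to a slice transverse to the M\"obius and reparametrization invariances so that Definition \ref{alphab} becomes a genuine coercivity statement for $\delta^2\mathcal W_{\alpha,\beta^b}(f^b)$ on the orthogonal complement of its kernel, I would consider the nonlinear operator
$$F(f,\alpha,\beta;a) := \bigl(\,\delta\mathcal W(f)-\alpha\,\delta\bigPi^1(f)-\beta\,\delta\bigPi^2(f),\ \bigPi^1(f)-a,\ \bigPi^2(f)-b\,\bigr),$$
with parameters $(a,b)$ and base zero $F(f^b,\alpha^b,\beta^b;0)=0$. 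Because the $\bigPi^2$-direction is non-degenerate at $f^b$ (cf.\ Remark \ref{betab}), the last component of $F$ can be solved smoothly for $\beta=\beta(f,a,b)$ by the implicit function theorem, eliminating $\beta$ from the system.

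The remaining system has one-dimensional $f$-kernel $\mathrm{span}\{\varphi^b\}$ at $(f^b,\alpha^b;0)$, as supplied by Proposition \ref{1d}. Decompose $f = f^b + t\varphi^b + w$ with $w\perp\varphi^b$; by strict positivity of $\delta^2\mathcal W_{\alpha^b,\beta^b}(f^b)$ on $(\varphi^b)^\perp$, the $(\varphi^b)^\perp$-component of the Euler-Lagrange equation can be solved uniquely for $w=w(t,\alpha,a)=O(t^2+|\alpha-\alpha^b|+a)$. What remains is a two-variable reduced system in $(t,\alpha)$ consisting of the $\varphi^b$-projection of the Euler-Lagrange equation together with the scalar constraint $\bigPi^1(f)=a$. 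Using Lemma \ref{n-thderivative} to produce the expansions
$$\bigPi^1(f^b+t\varphi^b+w) = c\,t^2 + O(t^3),\qquad c:= \tfrac12\,\delta^2\bigPi^1(f^b)(\varphi^b,\varphi^b)\neq 0,$$
and the analogous Taylor data for the $\varphi^b$-projected Euler-Lagrange equation in $(t,\alpha-\alpha^b)$, the constraint forces $t=\sqrt{a/c}\,(1+o(1))$ after fixing the orientation $a\geq 0$. Substituting into the scalar Euler-Lagrange equation and applying the implicit function theorem in the rescaled parameter $\tilde a=\sqrt{a}$ yields a unique branch $\alpha=\alpha(a,b)$ with $\alpha(a,b)\to\alpha^b$ as $a\to 0$; the maximality of $\alpha^b$ in Definition \ref{alphab} fixes the sign so that $\alpha(a,b)\nearrow\alpha^b$ from below. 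Comparing with the candidates $f_{(a,b)}$ of \cite{HelNdi2}, which satisfy the same characterization, identifies $f(a,b)=f_{(a,b)}$.

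The final assertion---uniqueness at $(\bigPi(f),\alpha,\beta)=((0,b),\alpha^b,\beta^b)$---is then the $a\to 0$ limit of this local uniqueness, combined with Lemma \ref{alpha<alpha^b} to exclude any competing solution whose multiplier stays bounded strictly below $\alpha^b$. The main obstacle is the square-root bifurcation behavior: $\tilde a=\sqrt{a}$ rather than $a$ is the natural parameter, so the Lyapunov-Schmidt reduction must be carefully rescaled by $\tilde a$, and the precise content of Lemma \ref{n-thderivative}---namely that $c\neq 0$ and that the linearization of the rescaled reduced equation in $\alpha-\alpha^b$ is non-zero at the origin---is exactly what permits the implicit function theorem to be applied to a smooth, non-degenerate problem. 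A secondary technical point is checking that the $W^{4,2}$-closeness hypothesis is strong enough to control $F$, its derivatives and the projection onto $\varphi^b$ inside the orthogonality decomposition; this is routine once the $L^2$-pairings and the invariance slice are set up.
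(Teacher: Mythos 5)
Your overall strategy -- Lyapunov--Schmidt reduction at $(f^b,\alpha^b,\beta^b)$ followed by bifurcation from a simple eigenvalue in the rescaled parameter $\tilde a=\sqrt a$, then comparison with the candidates of \cite{HelNdi2} -- is the route the paper takes, and the final monotonicity/sign analysis pinning $\alpha(a,b)\leq\alpha^b$ is in the right spirit. But one step as you have written it is genuinely wrong. You decompose $f=\exp_{f^b}(t\varphi^b+w)$ with $w\perp\varphi^b$ and assert that the auxiliary equation is solvable by the implicit function theorem because $\delta^2\mathcal W_{\alpha^b,\beta^b}(f^b)$ is strictly positive on $(\varphi^b)^\perp$ after passing to a slice transverse to M\"obius and reparametrization. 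This fails: the kernel of $\delta^2\mathcal W_{\alpha^b,\beta^b}(f^b)$ modulo the $\mathrm{Moeb}\times\mathrm{Diff}$-orbit tangents is two-dimensional, spanned by $\varphi^b$ \emph{and} $\tilde\varphi^b$, and $\tilde\varphi^b$ is not tangent to the group orbit at $f^b$ itself, so a pointwise slice does not remove it. Proposition \ref{1d} does not say the kernel is one-dimensional; it says the two-parameter normal-form family $\exp_{f^b}(t\varphi^b+s\tilde\varphi^b)$ can be brought, after composing with a family of $S^3$-isometries $M(s,t)$ and reparametrizations $\sigma(s,t)$, to $\exp_{f^b}(d_1\varphi^b)$ -- a statement about the group action \emph{along} the two-parameter family, not at the basepoint. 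The paper accordingly keeps both $\varphi^b$ and $\tilde\varphi^b$ in the kernel block, solves the auxiliary equation on the genuine complement $Y=\bigl(\mathrm{Moeb}_{f^b}T^2_b\oplus\langle\varphi^b,\tilde\varphi^b\rangle\bigr)^{\perp}$, and only afterwards uses Proposition \ref{1d} to normalize the $\tilde\varphi^b$-component of the resulting two-parameter solution family to zero, arriving at a genuinely one-dimensional bifurcation equation. With your decomposition, $\delta^2\mathcal W_{\alpha^b,\beta^b}$ restricted to $\{w\perp\varphi^b\}$ is still degenerate along $\tilde\varphi^b$ and $w(t,\alpha,a)$ cannot be produced by the IFT.

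A second, lesser point: you attribute $c=\tfrac12\delta^2\bigPi^1(f^b)(\varphi^b,\varphi^b)\neq 0$ to Lemma \ref{n-thderivative}. That quantity is actually fixed by the normalization of $\varphi^b$ as $\lim_a(\partial_{\sqrt a}f_{(a,b)})^\perp$ (it equals $2$, cf.\ \eqref{pieq}); the content of Lemma \ref{n-thderivative} is instead the pitchfork non-degeneracy data $\delta^3\mathcal W_{\alpha^b,\beta^b}(f^b)(\varphi^b,\varphi^b,\varphi^b)=0$ together with the non-vanishing of the combined fourth-order term. Your proof needs precisely those two facts: the vanishing third variation is what makes the $\varphi^b$-projected reduced equation free of a $t^2$-term, so the branch is a pitchfork rather than transcritical and the rescaled IFT in $\tilde a$ applies with the correct sign $\alpha\leq\alpha^b$; the non-vanishing fourth-order term is the cubic non-degeneracy that yields uniqueness of the branch. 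These should be named explicitly, as otherwise the claim $t\sim\sqrt{a}$ and the one-sidedness of $\alpha(a,b)$ do not follow.
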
 

Since our candidate surfaces from Theorem \ref{explicitcandidates} has Lagrange multiplier $\;\alpha_{(a,b)} \nearrow \alpha^b\;$ and smoothly converge to $\;f^b\;$ as $\;a \longrightarrow 0\;$ we can conclude that $\;f_{(a,b)}= f{(a,b)}\;$ for all $\;a \in [0, a^b)\;$ and $\;b \sim 1,\;$ $b \neq1.$\\

To prove the main Theorem (Theorem \ref{MainTheorem}) it remains to show that there are abstract minimizers $\;f^{(a,b)}\;$ of the constrained Willmore problem for the conformal class $\;(a,b)\;$ with $\;b \sim 1\;$ and $\;a \in [0, a^b),\;$ which clearly exist by \cite{KuwertSchätzle},  satisfying the additional property that their Lagrange multipliers $\;\alpha^{(a,b)}\longrightarrow \alpha^b,\;$ $\beta^{(a,b)} \longrightarrow \beta^b\;$ and $\;f^{(a,b)} \sim f^b\;$ in $\;W^{4,2}\;$ as $\;a \longrightarrow 0.\;$ Then these abstract minimizers would be covered by the classification result given by Theorem \ref{classification}, and must therefore coincide with $\;f{(a,b)}\;$ and the candidate surfaces $\;f_{(a,b)}$. \\

\begin{Rem}
Due to technicalities we actually only show the convergence of the Lagrange multipliers $\;\alpha^{(a,b)} \longrightarrow \alpha^b\;$ for $\;a \longrightarrow 0\;$  almost everywhere (and $\;b \sim 1\;$ fixed). More precisely, we show that $\;\alpha(a,b) \longrightarrow \alpha^b\;$ for $\;a\longrightarrow 0\;$ and $\;a \in [0, a^b) \setminus A\;$ for a suitable zero set $\;A.\;$ From this we can conclude that the abstract minimizers $\;f^{(a,b)}\;$ coincide for almost every $\;a\in [0,a^b) \;$ with the candidates surfaces $\;f_{(a,b)}.\;$ Then by the continuity of the minimal energy $\;\omega(a,b)\;$ as shown in \cite{KuwertSchätzle} (and real analyticity of $\;f_{(a,b)}\;$ for $\;a \neq0\;$) we  obtain that $\;f_{(a,b)}\;$ are constrained Willmore minimizers  for every $\;a \in [0, a^b).$
\end{Rem}
The properties of the abstract minimizers are shown by considering a relaxed minimization problem for a penalized Willmore functional as in the following theorem.\\

\begin{The}\label{abstractmin}
For $\;b \sim 1\;$ fixed and up to taking $a^b$ smaller we have that for all $\;a \in [0, a^b)\;$ the minimization problem
\begin{equation}
\begin{split}
\Min_{(a,b)} := \inf \big\{ \mathcal W_{\alpha^b}(f)\; |\   &f: T^2_{b} \longrightarrow S^3 \text{ smooth immersion with }\\
  &0 \leq \bigPi^1(f)  \leq a \; \text{ and }\; \bigPi^2(f) = b\big\}
\end{split}
\end{equation}

is attained by a smooth and non-degenerate (for $\;a \neq 0\;$) constrained Willmore immersion 

$$f^{(a,b)}: T^2_b \longrightarrow S^3$$

 of conformal type $\;(a,b)\;$
with Lagrange multipliers  $\;\alpha^{(a,b)} \nearrow \alpha^b,\;$ $ \beta^{(a,b)} \longrightarrow \beta^b\;$  for almost every $\;a \longrightarrow 0\;$  and  $\;f^{(a,b)} \longrightarrow f^b\;$ in $\;W^{4,2}\;$ for almost every $\;a \in [0, a^b).$ \\\end{The}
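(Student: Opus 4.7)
The plan is to run the standard constrained Willmore existence/regularity machinery on the penalized functional $\mathcal W_{\alpha^b}$ with constraint set $\{0\le \bigPi^1\le a,\ \bigPi^2=b\}$, and then to analyze the limit as $a\to 0^+$ by combining the competitor estimate from Remark \ref{ab} with the classification result of Theorem \ref{classification} and the rigidity Lemma \ref{alpha<alpha^b}.

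For existence and regularity, the candidate $f_{(a,b)}$ of \cite{HelNdi2} provides an admissible competitor of Willmore energy close to $\mathcal W(f^b)\sim 2\pi^2<8\pi$, so that for $a^b$ sufficiently small the infimum $\Min_{(a,b)}$ is well below $8\pi$, and one may invoke the existence and regularity theory for constrained Willmore minimizers to obtain a smooth embedded minimizer $f^{(a,b)}$. The Karush-Kuhn-Tucker conditions for the one-sided constraint $\bigPi^1\le a$ yield
\begin{equation*}
\delta \mathcal W(f^{(a,b)})=\alpha^{(a,b)}\,\delta \bigPi^1+\beta^{(a,b)}\,\delta \bigPi^2, \quad \alpha^{(a,b)}=\alpha^b-\lambda^{(a,b)},\ \ \lambda^{(a,b)}\ge 0,
\end{equation*}
so in particular $\alpha^{(a,b)}\le \alpha^b$. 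Inequality \eqref{fpmca} of Remark \ref{ab} rules out $f^b$ as the minimizer for $a\in(0,a^b)$, which forces $\bigPi^1(f^{(a,b)})>0$; since degenerate critical points are isothermic and hence satisfy $\delta \bigPi^1=0$, the minimizer is non-degenerate for $a>0$.

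Next, I would run a compactness argument as $a\to 0^+$. The Willmore energies are uniformly bounded below $8\pi$, and the Lagrange multipliers $(\alpha^{(a,b)},\beta^{(a,b)})$ are bounded ($\alpha^{(a,b)}$ from $0\le \alpha^b-\alpha^{(a,b)}$ combined with a uniform upper bound obtained by testing the Euler-Lagrange equation against suitable variations, and $\beta^{(a,b)}$ from the non-degeneracy of the $\bigPi^2$ direction at the limit, as in \cite{NdiayeSchatzle1}). The standard compactness theory therefore provides a subsequence with $f^{(a,b)}\to f^*$ in $W^{4,2}$ (up to M\"obius invariance) and $(\alpha^{(a,b)},\beta^{(a,b)})\to(\alpha^*,\beta^*)$ with $\alpha^*\le\alpha^b$ and $\bigPi(f^*)=(0,b)$. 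Lemma \ref{alpha<alpha^b} combined with the classification Theorem \ref{classification} then forces $f^*=f^b$ and $\beta^*=\beta^b$, uniquely identifying the subsequential limit and yielding $f^{(a,b)}\to f^b$ in $W^{4,2}$ along the full sequence.

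The delicate final step, and the main obstacle, is upgrading the weak inequality $\alpha^*\le \alpha^b$ into the sharp convergence $\alpha^{(a,b)}\nearrow\alpha^b$. The plan is to exploit the monotonicity of $m(a):=\Min_{(a,b)}$, which is non-increasing on $[0,a^b)$ and hence differentiable almost everywhere. An envelope/sensitivity argument identifies $m'(a_0)=-\lambda^{(a_0,b)}=\alpha^{(a_0,b)}-\alpha^b$ at each differentiability point, while the competitor bound $m(a)\le \mathcal W_{\alpha^b}(f_{(a,b)})$ together with the concavity from Remark \ref{ab} gives $m(0)-m(a)=o(1)$ as $a\to 0^+$; monotonicity then forces $m'(a)\to 0$ on a full-measure subset, i.e.\ $\alpha^{(a,b)}\nearrow \alpha^b$ for almost every $a$. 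This ``almost every'' caveat is unavoidable at this level, and is precisely why the identification $f^{(a,b)}=f_{(a,b)}$ in the Main Theorem is completed \emph{a posteriori} from the continuity of $\omega(a,b)$ in \cite{KuwertSchatzle} together with the real analyticity of $f_{(a,b)}$ for $a>0$.
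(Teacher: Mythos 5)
Your overall architecture tracks the paper's: existence below $8\pi$, KKT/Lagrange multiplier inequalities from the one-sided constraint, monotonicity and a.e.\ differentiability of $m(a):=\Min_{(a,b)}$, the envelope identity $m'(a)=\alpha^{(a,b)}-\alpha^b$ at differentiability points, compactness of the minimizing family, and identification of the limit via the classification and rigidity results. Two remarks on the earlier steps: the KKT derivation $\alpha^{(a,b)}\le \alpha^b$ implicitly presupposes that for $a>0$ the minimizer sits on the boundary $\bigPi^1=a$ (equivalently, is not interior nor at $\bigPi^1=0$); the paper establishes this in its Step 1 using not just the competitor inequality \eqref{fpmca} but also the classification Theorem \ref{classification} to exclude interior minimizers for small $a$. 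Your compactness step also assumes boundedness of the multipliers $\alpha^{(a,b)}$, which is in fact part of what must be proved; only the upper bound is free, and the lower bound is exactly the hard content of the paper's Step 3.

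The genuine gap is in the final step. You assert that $m(0)-m(a)=o(1)$ together with monotonicity ``forces $m'(a)\to 0$ on a full-measure subset.'' This implication is false: continuity of a monotone function at $0$ says nothing about the behaviour of its a.e.\ derivative near $0$ (e.g.\ $m(a)=-a$). Even the refined one-sided bound that the competitor does furnish, namely $m(a)-m(0)\le \mathcal W_{\alpha^b}(f_{(a,b)})-\mathcal W_{\alpha^b}(f^b)=o(a)$ since $\alpha_{(a,b)}\to\alpha^b$, still allows $m'$ to stay bounded away from $0$ (and even to diverge) on positive-measure sets near $0$, because nothing in your argument controls $m$ from below. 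The ``concavity from Remark \ref{ab}'' pertains to the candidate energy $a\mapsto \mathcal W(f_{(a,b)})$, not to the value function $m$, so it cannot be transferred. What closes this gap in the paper is the rigidity Lemma \ref{alpha<alpha^b}, used twice: once to show $\alpha_{\sup}:=\limsup_{a\to 0,\,a.e.}\alpha^{(a,b)}=\alpha^b$ (any subsequence with multipliers converging to some $\alpha^*<\alpha^b$ has minimizers converging smoothly to $f^b$, and Lemma \ref{alpha<alpha^b} forces them to equal $f^b$, a contradiction to $a_k>0$), and once for $\alpha_{\inf}$. Before the second application, one must separately exclude $\alpha^{(a,b)}\to-\infty$; the paper does this with an interval argument, picking $[\tilde a_k,a_k]$ on which $\omega(\cdot,b)$ achieves an interior minimum $\hat a_k$ and producing a smooth local family through $f^{(\hat a_k,b)}$ with nonzero derivative $\alpha^{(\hat a_k,b)}$, which yields a competitor strictly undercutting the interior minimum. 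Your proposal omits both the $\alpha_{\sup}$ rigidity step and the entire $\alpha_{\inf}\ge 0$ argument, and the claim that monotonicity alone does the job cannot replace them.
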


The minimizers with respect to the penalized functional $\;\mathcal W_{\alpha^b}\;$ automatically minimize the plain constrained Willmore problem. We briefly discuss the main ingredients for the proof of Theorem \ref{abstractmin}: By the work of Kuwert and Sch\"atzle \cite{KuwertSchätzle2} and Sch\"atzle \cite{Schätzle} we obtain the existence of the minimizers $\;f^{(a,b)}.\;$ Because of Equation \eqref{fpmca} and the classification (Theorem \ref{classification}), these minimizers are always attained at the boundary, i.e.,  $\;\bigPi^1\big(f^{(a,b)}\big) = a.\;$ This together with the relaxation of our constraint imply that $\;\Min_{(a,b)}\;$ is monotonic. Due to this monotonicity of $\;\Min_{(a,b)}\;$ we obtain that the minimal Willmore energy $\;\omega(a,b)\;$ is almost everywhere differentiable with respect to $\;a.\;$ In a second step we show that $\;\del_a\omega(a,b),\;$ where it exists, corresponds to $\;\alpha^{(a,b)}\;$ by constructing a smooth family of surfaces $\;\bar f^{(a,b)}\;$ whose Willmore energy approximates $\;\omega(a,b)\;$ at $\;a_0\;$ up to second order. By the monotonicity of $\;\text{Min}_{(a, b)}\;$ and Lemma \ref{alpha<alpha^b}  we show 
$$\alpha_{sup} := \limsup_{a \rightarrow 0, a.e. } \alpha^{(a,b)}=\alpha^b.$$ 
For $$\alpha_{inf}:=\liminf_{a \rightarrow 0, a.e.} \alpha^{(a,b)}$$
we use again the family $\;\bar f^{(a,b)}\;$ to show that $$\alpha_{inf} \geq 0. $$ Then by Lemma \ref{alpha<alpha^b} we show $$\alpha_{inf} \geq \alpha^b.$$ 

The remaining convergence of $\;\beta^{(a,b)} \longrightarrow \beta^b\;$ and $\;f^{(a,b)} \longrightarrow f^b\;$ in $\;W^{4,2}\;$ for almost every $\;a \longrightarrow 0\;$ follow from \cite{NdiayeSchätzle1}.\\

\section{Stability properties of a penalized Willmore energy}\label{stability}
In the computations below we mostly follow \cite{KuwertLorenz} and thus we refer to that paper for details. To fix the notations, we consider immersions

$$f: T^2=\C / \Gamma \longrightarrow (S^3, \;g_{S^3}),$$

where $\;\Gamma\;$ is a lattice and $\;g_{S^3}\;$ is the round metric on $\;S^3.\;$ Let Imm$\;(\C / \Gamma)\;$ denote the space of all such immersions and $\;$Met$(\C / \Gamma)\;$ the space of all metrics on the torus $\;T^2.\;$ Moreover, let 

\begin{equation*}
\begin{split}G: \text{Imm }(T^2) &\longrightarrow  \text{Met }(T^2)\\
 f &\longmapsto f^*g_{S^3}
\end{split}
\end{equation*}

 be the map which assigns to every immersion its induced metric.  We denote by $\;\pi\;$ the projection from the space of metrics to the Teichm\"ulller space, which we model by the upper half plane $\;\H^2\;$ and with the notations above we can define $\;\bigPi\;$ to be:
 
 $$\bigPi = \pi \circ G : \text{Imm }(T^2) \longrightarrow \H^2.$$ 

As in \cite{KuwertLorenz} we parametrize the homogenous torus with conformal class $\;b= \tfrac{s}{r},\;$ and $\;r^2 + s^2 = 1\;$ as

\begin{equation}\label{parametrizationofhomogenous}
\begin{split}
 f^b: T^2_b &\longrightarrow S^3,\\
 (x,y) &\longmapsto\begin{pmatrix}r e^{i\tfrac{x}{r}}, &s e^{i\tfrac{y}{s}}\end{pmatrix}.
 \end{split}
\end{equation}

We want to compute the value of $\;\alpha^b\;$ which we recall to be

 $$\alpha^b=\text{ max }\big\{\;\alpha \;  | \  \delta^2 \mathcal W_{\alpha, \beta^b}\big(f^b\big) \geq0\;\big\}.$$
 
From \cite{KuwertLorenz} we can derive that $\;\alpha^b\;$ is characterized by the fact that $\;\delta^2 \mathcal W_{\alpha^b, \beta^b}|_{f^b}\geq 0\;$ and there exist a non-trivial normal variation $\;\varphi^b\;$ of $\;f^b\;$ such that 

$$ \delta^2 \mathcal W_{\alpha^b, \beta^b}\big(f^b\big)\big(\varphi^b, \varphi^b\big)= 0, \quad \text{and}\quad \delta^2 \mathcal W_{\alpha, \beta^b}\big(f^b\big)\big(\varphi^b, \varphi^b\big)<0, \text{ for } \alpha >\alpha^b.$$  

We will show that for $\;b \neq 1\;$ the variation $\;\varphi^b\;$ is unique up to scaling, isometry of the ambient space and reparametrization of the surface $\;f^b.\;$ We will also choose the orientation of $\;f^b\;$ and the variation $\;\varphi^b\;$ such that $\;\delta^2 \bigPi^1\big(f^b\big) \geq 0.$ \\

While for $\;b= 1\;$ the exact value of $\;\alpha^1\;$ and the associated normal variations can be computed, $\;\alpha^b\;$ for $\;b \neq 1\;$ does not have a nice explicit form. Nevertheless, we will show that the unique normal variation $\;\varphi^b\;$ characterizing $\;\alpha^b\;$ remain the same (in a appropriate sense) for all $\;b\sim1.\;$  In fact, the normal variation $\;\lim_{a\rightarrow 0} \big(\del_a f_{(a,b)}\big)^\perp\;$ is the information we use to show that the Lagrange multipliers of our candidates $\;f_{(a,b)}\;$ converge to the $\;\alpha^b\;$ as $\;a \longrightarrow 0,\;$ see Section \ref{convergence}.\\

We first restrict to the case $\;b=1$ -- the Clifford torus. Since $\;\beta^1 = 0\;$ 
we investigate for which $\;\alpha\;$ the Clifford torus $f^1$ is stable for the penalized Willmore functional $\;\mathcal W_{\alpha} = \mathcal W - \alpha\bigPi^1.$

The second variation of the Willmore functional is well known. Thus we first concentrate on the computation of the second variation of $\;\bigPi^1.\;$ Another well known fact is $\;\delta \bigPi^1(f^1) = 0.\;$ Moreover, we have 

$$D^2 \bigPi^1\big(f^1\big)\big(\varphi, \varphi\big) = D^2\pi^1\Big(G\big(f^1\big)\Big)\Big(DG\big(f^1\big)\varphi, DG\big(f^1\big)\varphi\Big) +  D\pi^1\Big(G\big(f^1\big)\Big)\Big(D^2G\big(f^1\big)\big(\varphi,\varphi\big)\Big)$$

The first term is computed in Lemma $4$ of \cite{KuwertLorenz} to be

$$D \pi^1\Big(G\big(f^1\big)\Big)\Big(D^2G\big(f^1\big)\big(\varphi,\varphi\big)\Big) = - \tfrac{1}{\pi^2} \int_{T^2_1} <\nabla^2_{12} \varphi, \varphi> d \mu_{g_{euc}},$$

for normal variations $\varphi.$ It remains to compute the second term

$$D^2\pi^1\Big(G\big(f^1\big)\Big)\Big(DG\big(f^1\big)\varphi, DG\big(f^1\big)\varphi\Big).$$

By a straight forward computation (or by Lemma $2$ of \cite{KuwertLorenz}) we have  

$$D G\big(f^1\big)\varphi = - 2 \int_{T^2_1}<\text{II}, \varphi> d \mu_{g_{euc}},$$

where $\;\text{II}\;$ is the second fundamental form of the Clifford torus, which is trace free.\\

\noindent
Let $\;u\;$ and $\;v \in S_2(T^2_1)\;$ be symmetric $2-$forms satisfying 

$$\Tr_{euc} u = \Tr_{euc}v = 0 \quad \text{ and } \quad v \perp_{euc} S_2^{TT}(g_{euc}),$$

where $\;S_2^{TT}(g_{euc})\;$ is the space of symmetric, covariant, transverse traceless $2$-tensors with standard basis $\;q^1\;$ and $\;q^2\;$ and $\;q^i(t)\;$ the corresponding basis of $\;g(t).\;$ Let $\;g(t) = g_{euc} + t u\;$ and $\;q^i(t) = q^i\big(g(t)\big)\;$ such that $\;\big(q^i(t) - q^i\big) \perp_{euc} S_2^{TT}(g_{euc}).\;$ Then we can expand $\;v\;$ by 

$$v= v_i(t) q^i(t) + v^\perp(t), \quad \text{where }v^\perp(t)\perp_{euc} S_2^{TT}(g_{euc}).$$

By assumption we have $\;v_i(0) = 0\;$ and thus 
$$D^2\pi^1(g_{euc})(u, v) = \frac{d}{dt} D\pi\big(g(t)\big) \cdot v|_{t=0} = v'_{1}(0) D\pi(g_{euc}) \cdot q^1,$$
where
$$v'_{1}(0) = \tfrac{1}{4\pi^2} <v, (q^1)'(0)>_{L^2(g_{euc})},$$
as computed in  \cite{KuwertLorenz}. \\

Let $\;\eta:= \big(q^1\big)'(0) \; $ and $\;\eta^\circ = \eta_1 q^1 + \eta_2 q^2\;$ be its traceless part, then by Lemma 6 of \cite{KuwertLorenz} we have

\begin{equation}\label{etagleichung}
\begin{split}
(\text{div}_{euc} \eta^{\circ})_1 &= (\text{div}_{euc} u)_2\\
(\text{div}_{euc} \eta^{\circ})_2 &= (\text{div}_{euc} u)_1.
\end{split}
\end{equation}

For $\;u = u_1 q^1 + u_2 q^2\;$ we obtain,

$$ (\text{div}_{euc} u)_1 = \partial_2 u_1 - \partial_1 u_2, \quad (\text{div}_{euc} u)_2 = \partial_1 u_1 + \partial_2 u_2,$$

and therefore the Equations \eqref{etagleichung} become 

\begin{equation}\label{Ungleichung}
\begin{split}
 \partial_2 \eta_1 - \partial_1 \eta_2 &= \partial_1 u_1 + \partial_2 u_2\\
 \partial_1 \eta_1 + \partial_2 \eta_2 &= \partial_2 u_1 - \partial_1 u_2.
\end{split}
\end{equation} 

If we specialize to the relevant case $\;u = u_2 q^2\;$ and $\;v = v_2 q^2\;$ this yields 

$$\big(D^2\pi(g_{euc})(u, v)\big)_1 = \tfrac{1}{4\pi^2}< v_2 q^2 ,\eta^\circ>_{L^2(g_{euc})},$$ 

and we only need to concentrate on $\;\eta_2.\;$  Differentiating the Equations \eqref{Ungleichung} and subtracting these form each other gives (with $\;u_1 = 0$)

\begin{equation}\label{laplace}
\Delta \eta_2 = - 2 \partial_1\partial_2 u_2.
\end{equation}

In order to compute $\;\eta_2\; $ we restrict to normal variations $\;\varphi = \Phi \vec{n}\;$ for doubly periodic functions $\;\Phi\;$ in a Fourier space, i.e.,  $\;\Phi\;$ is a doubly periodic function on $\;\C\;$ with respect to the lattice $\;\sqrt{2}\pi \Z + \sqrt{2}\pi i \Z.\;$ The Fourier space $\;\mathcal F \big(T^2_1\big)\;$ of doubly periodic functions is the disjoint union of the constant functions and the $4$-dimensional spaces $\;\mathcal A_{kl}\big(T^2_1\big),\;$ $(k,l) \in \N\setminus \{(0,0)\}\;$ with basis

\begin{equation}
\begin{split}
&\sin\big(\sqrt{2} kx\big)\cos\big(\sqrt{2}ly\big), \quad \cos\big(\sqrt{2}kx\big)\sin\big(\sqrt{2}ly\big),\\ &\cos\big(\sqrt{2}kx\big)\cos\big(\sqrt{2}ly\big), \quad \sin\big(\sqrt{2}kx\big)\sin\big(\sqrt{2}ly\big).
\end{split}
\end{equation}

We restrict to the case where $\;\Phi = \Phi_{kl} \in \mathcal A_{kl},$ $(k,l) \in \N^2\setminus (0,0)\;$ in the following. Then for $\;u = v = \Phi_{kl} \vec{n}\;$ we obtain that 

$$\eta_2 = \tfrac{2}{k^2+ l^2}\partial_1\partial_2 \Phi_{kl}$$ 

solves equation \eqref{laplace}. The integration constant is hereby chosen such that $\;<\eta^{0}, q_1>_{L^2(g_{euc})} = 0.$
 
 Thus
 $$D^2\pi^1\big(G\big(f^1\big)\big)(u, v) = \tfrac{1}{2\pi^2 (k^2 + l^2)} \int_{T^2_1} \big(\partial^2_{12} \Phi_{kl}\big) \Phi_{kl}.$$
 
Put all calculations together we obtain

\begin{equation*}
\begin{split}
D^2\bigPi^1\big(f^1\big)\big(\varphi, \varphi\big)
&= -\tfrac{1}{\pi^2} \int_{T^2_1} \big(\partial^2_{12} \Phi_{kl}\big) \Phi_{kl}  \\
 &+ \tfrac{2}{\pi^2(k^2 + l^2)} \int_{T^2_1} \big(\partial^2_{12} \Phi_{kl}\big) \Phi_{kl}.
\end{split}
\end{equation*}

\begin{Rem}
The second variation for general normal variation $\;\varphi = \left(\sum_{k,l \in \N^2} a_{k,l} \Phi_{k,l}\right) \vec{n}\;$ is obtained by linearity. Terms obtain by pairing $\;\Phi_{k,l}\;$ and $\;\Phi_{m,n},\;$ where $\;(k,l) \neq (m,n)\;$ vanishes. To determine stability of $\;\mathcal W_\alpha\;$ we can thus restrict ourselves with out loss of generality to the case $\;\varphi = \Phi_{k,l} \vec{n}.$\\
\end{Rem}

Clearly, if for a normal variation $\;\varphi\;$ we have 

$$D^2\bigPi^1\big(f^1\big)\big(\varphi, \varphi\big) \leq 0,$$

 then by the stability of the Clifford torus 
 
$$D^2 \mathcal W_{\alpha}\big(f^1\big)\big(\varphi, \varphi\big) \geq 0$$  

for all $\;\alpha \geq 0.\;$ Moreover, for

\begin{equation*}
\begin{split}
\Phi_{kl} &= a \sin\big(\sqrt{2}kx\big)\cos\big(\sqrt{2}ly\big) + b \cos\big(\sqrt{2}kx\big)\sin\big(\sqrt{2}ly\big)\\ &+ c \cos\big(\sqrt{2}kx\big)\cos\big(\sqrt{2}ly\big) + d \sin\big(\sqrt{2}kx\big)\sin\big(\sqrt{2}ly\big)
\end{split}
\end{equation*}

with $\;k,l \in \N \setminus \{0\}\;$ and $\;a,\; b, \;c, \;d \in \R\;$ we have:

\begin{equation}\label{D^2Pi^1}
\begin{split}
D^2\bigPi^1\big(f^1\big)\big(\varphi, \varphi)  &=\tfrac{1}{\pi^2} \big(2kl - \tfrac{4kl}{k^2+l^2} \big)\tfrac{2ab - 2cd}{a^2 + b^2+ c^2+ d^2}<\varphi, \varphi>_{L^2}\\& \leq \tfrac{1}{\pi^2} \big(2kl - \tfrac{4kl}{k^2+l^2} \big)<\varphi, \varphi>_{L^2},
\end{split}
\end{equation}

with equality if and only if 
\begin{equation}\label{Pi2=}
a = b \quad \text{ and } \quad c = - d.
\end{equation}

\noindent
The second variation of the Willmore functional at the Clifford torus (Lemma 3 \cite{KuwertLorenz}) is given by:

\begin{equation}\label{D^2W}
\begin{split}
D^2 \mathcal W\big(f^1\big) (\varphi, \varphi) &= <\big(\tfrac{1}{2}\Delta^2 + 3\Delta + 4\big)\varphi,\varphi >_{L^2}\\
&= \big(2 (k^2 + l^2)^2 - 6(k^2 + l^2) + 4\big)<\varphi,\varphi>_{L^2}.
\end{split}
\end{equation}

Therefore we have

$$D^2 \mathcal W\big(f^1\big) (\varphi, \varphi) =  0,$$ if and only if
$\;k =\pm 1\;$ and $\;l = \pm 1,\;$
or $\;k = 0\;$ and $\;l = \pm1,\;$
 or $\;k = \pm1\;$ and $\;l = 0.$\\

\noindent
Let $\;c:= \tfrac{k}{l}\;$ and we assume without loss of generality that $\;c \geq 1,\;$ then the second variation formulas \eqref{D^2Pi^1} and \eqref{D^2W} simplifies to:

\begin{equation*}
\begin{split}
D^2 \mathcal W\big(f^1\big) (\varphi, \varphi) &= \big(2 (c^2 + 1)^2 l^4 - 6 (c^2 + 1) l^2 + 4\big)<\varphi,\varphi>_{L^2}\\
D^2\bigPi^1\big(f^1\big)(\varphi, \varphi)  &\leq  \tfrac{1}{\pi^2} \big(2c l^2 -4 \tfrac{c}{c^2 +1}\big)<\varphi,\varphi>_{L^2}.
\end{split}
\end{equation*}

Hence we obtain for $\; \tilde\alpha = \tfrac{1}{4\pi^2} \alpha$

$$D^2 \mathcal W_{\alpha} (f^1)(\varphi, \varphi) \geq \Big(2 (c^2 + 1)^2 l^4 - \big(6(c^2 + 1) +  8\tilde \alpha c\big)l^2 + 4 + 16\tilde \alpha \tfrac{c}{c^2 +1} \Big) <\varphi,\varphi>_{L^2}.$$

with equality is and only if $\;\Phi\;$ is satisfies \eqref{Pi2=}.
We still want to determine the range of $\;\alpha\;$ for which $\;\mathcal W_{\alpha}\;$ is stable.  At $\;\alpha = \alpha^b\;$ the second variation of $\;\mathcal W_{\alpha}\;$ have zero directions in the normal part which are not  M\"obius variations. Thus we need to determine the roots the polynomial 

$$g_{\tilde \alpha, c}(l): = \Big(2 (c^2 + 1)^2 l^4 - \big(6(c^2 + 1) +  8\tilde \alpha c\big)l^2 + 4 + 16\tilde \alpha \tfrac{c}{c^2 +1} \Big)$$

The polynomial $\;g_{\tilde \alpha, c}\;$ is even, its leading coefficient is positive and its roots satisfy:

\begin{equation}\label{rootsl}
l^2 = \tfrac{2}{c^2 +1}, \text{ or } \quad l^2= \tfrac{1}{c^2 +1} + 4 \tilde \alpha\tfrac{c}{(c^2 +1)^2}.
\end{equation}

The values of $\;l \in \N\;$ for which $\;g_{\tilde \alpha, c}\;$ is negative lies exactly between the positive roots of $\;g_{\tilde \alpha, c}.\;$ So we want to determine $\;\tilde \alpha\;$ such that this region of negativity  for $\;g_{\tilde \alpha, c},\;$ i.e., the intervall between the two positive solutions $\;l_1(\tilde \alpha, c)\;$ and $\;l_2(\tilde \alpha, c)\;$ of \eqref{rootsl} contain no positive integer for all $\;c  = \tfrac{k}{l}\;$ (other than those combinations leading to a M\"obius variation).
\noindent 
We consider two different cases:

$$c= 1 \text{ and } c > 1.$$

For $\;c = 1\;$ the four roots of $\;g_{\tilde \alpha,1}\;$ are determined by:  

$$l^2 =1, \quad l^2 = \tfrac{1}{2} + \tilde \alpha.$$

Since the case of $\;l^2 = 1,\;$ i.e., $\;l=k = \pm1\;$ corresponds to M\"obius variations, we can rule out the existence of negative values of $\;g_{\tilde \alpha, 1}\;$ if and only if the second root satisfies 

$$|l| \leq 2, \quad \text{ or equivalently, } \quad  l^2 = \tfrac{1}{2} + \tilde \alpha \leq 4.$$

 From which we obtain $\;\tilde \alpha \leq \frac{7}{2}.$\\

For $\;c > 1,\;$ the first equation $\;l^2 = \tfrac{2}{c^2 +1}< 1\;$ is never satisfied for an integer $\;l.\;$ Thus we only need to consider the equation
$$l^2 = \tfrac{1}{c^2 +1} + 4 \tilde \alpha\tfrac{c}{(c^2 +1)^2}.$$

To rule out negative directions for $\;D^2\mathcal W_{4 \pi^2 \tilde \alpha}\;$ it is necessary and sufficient to have 

$$l^2 = \tfrac{1}{c^2 +1} + 4 \tilde \alpha\tfrac{c}{(c^2 +1)^2} \leq 1 $$

for appropriate $\;c= \tfrac{k}{l}.\;$ For $\;l^2 = 1\;$ we obtain that $\;c = \tfrac{k}{l} \in \N_{>1}\;$ and $\;\tilde \alpha\;$ satisfies:

$$\tilde \alpha  =  \tfrac{1}{4} (c^3+ c).$$

The right hand side is monotonic in $\;c\;$ and therefore the minimum for $\;c \in \N_{>1}\;$ is attained at $\;c=2\;$ which is equivalent to $\;\tilde \alpha  =\frac{5}{2}.\;$ Since $\;\frac{5}{2} < \tfrac{7}{2}\;$ which was the maximum $\;\tilde \alpha\;$ in the $\;c=1\;$ case, we get that  $\;\delta^2\mathcal W_{10 \pi^2}\big(f^1\big) \geq 0.\;$ Further, at $\;\tilde \alpha = \frac{5}{2}\;$ the (non-M\"obius) normal variations in the kernel of $\;\delta^2\mathcal W_{10 \pi^2}\big(f^1\big)\;$ are given by

\begin{equation}
\begin{split}
\Phi_1 &= \sin(2 \sqrt{2} y) \cos(\sqrt{2} x) +  \cos(2\sqrt{2} y) \sin(\sqrt{2} x) = \sin\big(\sqrt{2}(x+ 2y)\big)\\
\tilde  \Phi_1  &= \sin(2 \sqrt{2} y) \sin(\sqrt{2} x) - \cos(2\sqrt{2} y) \cos(\sqrt{2} x) = \cos\big(\sqrt{2}(x+2y)\big)
\end{split}
\end{equation}

and by symmetry of $\;k\;$ and $\;l\;$ (we have assumed $\;c\geq1$): 

\begin{equation}
\begin{split}
\Phi_2 &= \sin(2 \sqrt{2} x) \cos(\sqrt{2} y) +  \cos(2\sqrt{2} x) \sin(\sqrt{2} y) = \sin\big(\sqrt{2}(2x + y)\big)\\
\tilde  \Phi_2  &= \sin(2 \sqrt{2} x) \sin(\sqrt{2} y) - \cos(2\sqrt{2} x) \cos(\sqrt{2} y)= \cos\big(\sqrt{2}(2x+y)\big),
\end{split}
\end{equation}

where $\;\tilde \Phi_i (x,y) = \Phi_i (x, y+ \tfrac{\pi}{2}),\;$ i.e., $\;\Phi_i$ and $\tilde \Phi_i\;$ differ only by a translation. We have shown the following Lemma.\\

\begin{Lem}\label{kernvonalpha}
At $\;b = 1\;$ we have that 

$$\alpha^1 = \text{ max }\big\{\;\alpha >0 \  | \ \delta^2\mathcal W_{\alpha}\big(f^1\big) \geq 0\; \big\}$$

is computed to be $\;10 \pi^2.\;$\\
\end{Lem}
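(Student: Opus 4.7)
The plan is to exploit the Fourier decomposition of doubly-periodic normal variations already set up in the text, reducing the problem to a one-parameter family of quartic polynomial inequalities indexed by the wave numbers $(k,l) \in \N^2$.

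First I would observe that both bilinear forms $\delta^2 \mathcal{W}(f^1)$ and $\delta^2 \bigPi^1(f^1)$ preserve the Fourier decomposition $\mathcal{F}(T^2_1) = \R \oplus \bigoplus_{(k,l)} \mathcal{A}_{kl}$: cross terms between distinct modes vanish by orthogonality of the trigonometric basis. Therefore checking $\delta^2 \mathcal{W}_\alpha(f^1) \geq 0$ reduces to checking it on each $\mathcal{A}_{kl}$ separately and on constants (where both forms vanish). Within a fixed mode $\mathcal{A}_{kl}$, the inequalities \eqref{D^2Pi^1} and \eqref{D^2W} derived above give, after setting $\tilde{\alpha} = \alpha/(4\pi^2)$ and $c = k/l$, the pointwise bound
\begin{equation*}
\delta^2 \mathcal{W}_\alpha(f^1)(\varphi,\varphi) \;\geq\; g_{\tilde{\alpha},c}(l) \, \|\varphi\|_{L^2}^2,
\end{equation*}
with equality exactly on the two-parameter family \eqref{Pi2=}. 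So stability of $\mathcal{W}_\alpha$ on the normal bundle (away from the conformal directions) is equivalent to requiring $g_{\tilde{\alpha},c}(l) \geq 0$ for every admissible $(k,l)$.

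Next I would analyze the even quartic $g_{\tilde{\alpha},c}$ in $l$ by reading off its positive roots from \eqref{rootsl}: they are $l^2 = 2/(c^2+1)$ and $l^2 = 1/(c^2+1) + 4\tilde{\alpha} c/(c^2+1)^2$. Since the leading coefficient is positive, $g_{\tilde{\alpha},c}$ is nonnegative on the integers iff no integer $l \geq 1$ lies strictly between these two roots (or one may collide with a Möbius direction, which belongs to the conformal kernel and does not obstruct $\alpha^1$). Splitting into cases $c = 1$ and $c > 1$, as in the excerpt, the first root $l^2 = 2/(c^2+1)$ only reaches an integer in the Möbius case $c = 1$, $l = 1$, so the binding constraint always comes from the second root. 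Taking $l = 1$ in $l^2 = 1/(c^2+1) + 4\tilde{\alpha} c/(c^2+1)^2 \leq 1$ gives $\tilde{\alpha} \leq (c^3 + c)/4$, and the worst (smallest) value over integer candidates $c \geq 2$ is attained at $c = 2$, yielding $\tilde{\alpha} \leq 5/2$. For $c = 1$ the analogous analysis gives the looser bound $\tilde{\alpha} \leq 7/2$.

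Therefore the maximum $\tilde{\alpha}$ for which $\delta^2 \mathcal{W}_\alpha(f^1) \geq 0$ is $5/2$, giving $\alpha^1 = 10\pi^2$. At this critical value the kernel (modulo Möbius variations) is generated by the modes $\Phi_i, \tilde{\Phi}_i$ exhibited at the end of the excerpt, so strict positivity fails exactly at $\alpha = \alpha^1$ and is violated for $\alpha > \alpha^1$, confirming the definition of $\alpha^b$ at $b = 1$. The only place that requires care is correctly identifying the Möbius variations (the kernel of $\delta^2\mathcal{W}$ itself, which sits at $l^2 = 1$ with $c = 1$) so as not to mistake them for obstructions; this is the main bookkeeping subtlety, but it is handled automatically because those directions are annihilated by both forms simultaneously.
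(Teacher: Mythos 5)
Your proposal is correct and follows essentially the same route as the paper's own argument: the Fourier mode decomposition into the spaces $\mathcal{A}_{kl}$, the reduction to the quartic $g_{\tilde\alpha,c}(l)$, the case split $c=1$ versus $c>1$, and the minimization of $(c^3+c)/4$ over $c\in\N_{>1}$ giving $\tilde\alpha\le 5/2$ and hence $\alpha^1=10\pi^2$. One small imprecision worth flagging: you identify the Möbius kernel of $\delta^2\mathcal{W}(f^1)$ with $l^2=1$, $c=1$, but as the computation \eqref{D^2W} shows it also contains the modes $(k,l)=(1,0)$ and $(0,1)$; since those have $kl=0$ they contribute nothing to $\delta^2\bigPi^1$ and so pose no obstruction, but they should be included when you describe the kernel of $\delta^2\mathcal{W}$.
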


The problem at $\;b= 1\;$ is that the kernel dimension of $\;\delta^2 \mathcal W_{\alpha^1}\big(f^1\big)\;$ is too high. Even using the invariance of the equation it is not possible to reduce it to $1$, which is needed for the bifurcation theory from simple eigenvalues. The main reason is that linear combinations of the two $\;\Phi_i\;$ cannot be reduced to a translation and scaling of $\;\Phi_1\;$ only. This situation is different for $\;b\neq 1,\;$ see Proposition \ref{1d}, because for homogenous tori \eqref{parametrizationofhomogenous} the immersion is not symmetric w.r.t. parameter directions $\;x\;$ and $\;y\;$. For $\;b \neq 1\;$ we have that $\;\beta^b \neq 0\;$ and thus the second variation of $\;\bigPi^2\;$ enters the calculation of  
$$\alpha^b= \text{ max }\big\{\;\alpha>0 \;| \; \delta^2\mathcal W_{\alpha, \beta^b} \geq0\;\big\}.$$

Moreover, $\;A_{k,l}\big( T^2_1\big)\;$ is canonically isomorphic to $\;A_{k,l}\big( T^2_b\big)\;$ via

\begin{equation}
\begin{split}
\sin\big(k \sqrt{2}x\big)\cos\big(l \sqrt{2}y\big) &\longmapsto \sin\big(\tfrac{k x}{r}\big)\cos\big(\tfrac{l y}{s}\big),\\
\sin\big(k \sqrt{2}x\big)\sin\big(l \sqrt{2}y\big) &\longmapsto \sin\big(\tfrac{k x}{r}\big)\sin\big(\tfrac{l y}{s}\big),\\
\cos\big(k \sqrt{2}x\big)\sin\big(l \sqrt{2}y\big) &\longmapsto \cos\big(\tfrac{k x}{r}\big)\sin\big(\tfrac{l y}{s}\big),\\
\cos\big(k \sqrt{2}x\big)\cos\big(l \sqrt{2}y\big) &\longmapsto \cos\big(\tfrac{k x}{r}\big)\cos\big(\tfrac{l y}{s}\big).
\end{split}
\end{equation}

To emphasis this isomorphism, we denote in the following normal variations at the Clifford torus by $\;\varphi^1 = \Phi^1 \vec{n}^1,\;$ with $\;\Phi^1\;$ a well defined function on $\;T^2_1,\;$  and the corresponding normal variations at homogenous tori $f^b$ under the above isomorphism by $\;\varphi^b = \Phi^b \vec{n}^b.\;$
Since $\;\delta^2 \mathcal W \geq0\;$ and $\;\delta^2\bigPi^1 \geq0\;$
we obtain the following Lemma using Lemma 4 and 7 of \cite{KuwertLorenz}.\\

\begin{Lem}\label{spectralproperty}
With the notations as above let $\;\alpha_0 \in \R_+\;$ be fixed and $\;\Phi^1 \in A_{k,l}(T^2_1)\;$ such that 

$$\delta^2\mathcal W_{\alpha_0}\big(f^1\big)\big( \varphi^1,  \varphi^1\big) >0.$$

Then for $\;b\sim1\;$ close enough we also have

$$\delta^2\mathcal W_{\alpha, \beta^b}\big(f^b\big)\big(  \varphi^b, \varphi^b\big)>0,$$

for all $\;\alpha \leq \alpha_0.\;$\\
\end{Lem}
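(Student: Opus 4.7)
The plan is to argue by continuity in the conformal parameter $b$, exploiting monotonicity in $\alpha$. By the explicit formulae for the second variations at a homogeneous torus (Lemmas 4 and 7 of \cite{KuwertLorenz}, together with the Fourier decomposition computation used in the $b=1$ case above), each of the three maps $b\mapsto \delta^2\mathcal W(f^b)(\varphi^b,\varphi^b)$, $b\mapsto \delta^2\bigPi^1(f^b)(\varphi^b,\varphi^b)$ and $b\mapsto \delta^2\bigPi^2(f^b)(\varphi^b,\varphi^b)$ is continuous under the canonical identification $A_{k,l}(T^2_1)\cong A_{k,l}(T^2_b)$ used to transport $\varphi^1$ to $\varphi^b$. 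Together with continuity of $b\mapsto \beta^b$ at $b=1$ (where $\beta^1=0$, since the Clifford torus is minimal), this gives continuity of
$$(\alpha,b)\longmapsto \delta^2\mathcal W_{\alpha,\beta^b}(f^b)(\varphi^b,\varphi^b).$$

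The first step is a monotonicity argument in $\alpha$. The choice of orientation has been fixed so that $\delta^2\bigPi^1(f^1)(\varphi^1,\varphi^1)\geq 0$; by continuity in $b$ the same sign persists on the transported mode, so $\delta^2\bigPi^1(f^b)(\varphi^b,\varphi^b)\geq 0$ for $b$ close enough to $1$. Therefore, for every $\alpha\leq \alpha_0$ and such $b$,
$$\delta^2\mathcal W_{\alpha,\beta^b}(f^b)(\varphi^b,\varphi^b)\;\geq\;\delta^2\mathcal W_{\alpha_0,\beta^b}(f^b)(\varphi^b,\varphi^b).$$
The second step is to evaluate the right-hand side at $b=1$: since $\beta^1=0$, it reduces to $\delta^2\mathcal W_{\alpha_0}(f^1)(\varphi^1,\varphi^1)$, which is strictly positive by hypothesis. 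Applying the continuity statement above, this strict positivity propagates to a neighborhood of $b=1$; chained with the monotonicity in $\alpha$ this yields the claim for all $\alpha\leq\alpha_0$ and all $b\sim 1$.

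The main obstacle is not conceptual but bookkeeping: one must verify that the $b$-dependent versions of the formulae from \cite{KuwertLorenz}, restricted to the transported Fourier mode $\varphi^b\in A_{k,l}(T^2_b)$, indeed depend continuously (in fact smoothly) on $b$, and that the non-negativity $\delta^2\bigPi^1(f^b)(\varphi^b,\varphi^b)\geq 0$ is inherited for $b$ near $1$ rather than holding only at $b=1$. Both facts follow from the smooth dependence of the induced metric, second fundamental form and normal of $f^b$ on $b$, combined with the trigonometric computations carried out at the Clifford torus. No global stability of $\mathcal W_{\alpha_0}$ is used; the only input is the strict positivity on the single mode $\varphi^1$ at $b=1$, and continuity takes care of the rest.
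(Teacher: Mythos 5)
Your approach is the same one the paper sketches (continuity of the Kuwert--Lorenz formulae in $b$, monotonicity in $\alpha$), but the monotonicity step in your second paragraph rests on $\delta^2\bigPi^1(f^1)(\varphi^1,\varphi^1)\geq 0$, and this does \emph{not} follow from the orientation convention for an arbitrary mode $\Phi^1\in A_{k,l}(T^2_1)$. Reversing orientation sends $\bigPi^1$ to $-\bigPi^1$ and so flips the sign of $\delta^2\bigPi^1$ on \emph{every} mode simultaneously; it can therefore fix the sign on one distinguished direction (the kernel mode $\varphi^b$, which is what the paper's convention refers to) but cannot make the quadratic form nonnegative on all of $A_{k,l}$. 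Indeed the explicit formula \eqref{D^2Pi^1} shows that, with the $(k,l)$-prefactor nonnegative for $k,l\geq 1$, the sign of $\delta^2\bigPi^1(f^1)(\varphi^1,\varphi^1)$ is the sign of $2ab-2cd$ in the notation used there, which is negative for some admissible coefficient choices (e.g.\ $a=-b$, $c=d$) that still satisfy $\delta^2\mathcal W_{\alpha_0}(f^1)(\varphi^1,\varphi^1)>0$. On such a mode your inequality $\delta^2\mathcal W_{\alpha,\beta^b}\geq\delta^2\mathcal W_{\alpha_0,\beta^b}$ points the wrong way, and if $\alpha$ is allowed to be arbitrarily negative the asserted positivity actually fails already at $b=1$. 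So the statement must implicitly carry the restriction $\alpha\geq 0$ (all uses in the paper have $\alpha$ near the positive number $\alpha^b$).

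With $\alpha$ restricted to $[0,\alpha_0]$ the argument can be repaired without any sign assumption on $\delta^2\bigPi^1$. The map $\alpha\mapsto\delta^2\mathcal W_{\alpha}(f^1)(\varphi^1,\varphi^1)$ is affine, so its minimum on $[0,\alpha_0]$ is attained at an endpoint. At $\alpha=\alpha_0$ it is positive by hypothesis. At $\alpha=0$ it equals $\delta^2\mathcal W(f^1)(\varphi^1,\varphi^1)\geq 0$ by stability of the Clifford torus, and equality would force $(k,l)\in\{(1,1),(1,0),(0,1)\}$ by \eqref{D^2W}; on those modes the $(k,l)$-prefactor in \eqref{D^2Pi^1} vanishes, so $\delta^2\bigPi^1(f^1)(\varphi^1,\varphi^1)=0$ and hence $\delta^2\mathcal W_{\alpha_0}(f^1)(\varphi^1,\varphi^1)=0$, contradicting the hypothesis. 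Thus $\min_{\alpha\in[0,\alpha_0]}\delta^2\mathcal W_{\alpha}(f^1)(\varphi^1,\varphi^1)>0$, and your continuity-in-$b$ argument (uniform over the compact interval $[0,\alpha_0]$, since the $\alpha$-dependence is affine with coefficients continuous in $b$) then yields the conclusion for all $\alpha\in[0,\alpha_0]$ and $b\sim 1$. This fills the gap while staying within your framework; note that the paper's one-line justification quoting ``$\delta^2\bigPi^1\geq 0$'' has the same issue and needs the same case analysis.
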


Kuwert and Lorenz  \cite{KuwertLorenz} computed the second derivative of $\;\bigPi^2\;$ for $\;\varphi^b = \Phi^b_{k,l} \vec{n}^b \;$ to be 

\begin{equation}
\begin{split}
D^2 \bigPi^2\big(f^b\big)\big( \varphi^b,  \varphi^b\big) &=\tfrac{1}{4\pi^2 r^2} \int_{T^2_b}<\del^2_{11} \Phi^b- \del^2_{22} \varphi^b,  \varphi^b>dA\\
 &+ \tfrac{r^2-s^2}{4\pi^2 r^4s^2}\int_{T^2_b} | \varphi^b|^2dA \\
&- \tfrac{2(r^2-s^2) + c_r(k,l)}{4\pi^2 r^4s^2} \int_{T^2_b} | \varphi^b|^2dA,
\end{split}
\end{equation}

where $\; c_r(k,l) := \tfrac{k^2s^2- l^2r^2}{k^2s^2+l^2r^2} \;$ and $\; \varphi^b \in A_{k,l}\big(T^2_b\big) \vec{n}^b.$

For $\;b \sim 1,\;$ i.e., $\;r \sim \tfrac{1}{\sqrt{2}}\;$ this yields 

$$D^2\bigPi^2\big(f^b\big)\big( \varphi^b_1,  \varphi^b_1\big) >D^2 \bigPi^2\big(f^b\big)\big( \varphi^b_2,  \varphi^b_2\big),$$

for $\; \varphi_i^b\;$ are the images of $ \;\varphi_i^1 \in $ Ker $\delta^2\mathcal W_{\alpha^1}\big(f^1\big)\;$ under the canonical isomorphism and since for  $\;b>1,\;$ i.e.,  $\;r<s\;$ and $\;\beta^b >0\;$ we obtain

$$\delta^2 \mathcal W_{\alpha^1, \beta^b}\big(f^b\big)\big( \varphi^b_1,  \varphi^b_1\big) < \delta^2 \mathcal W_{\alpha^1, \beta^b}\big(f^b\big)\big( \varphi^b_2,  \varphi^b_2\big) < 0.$$

Thus $\;\alpha^b < \alpha^1\;$ and we obtain that for $\;b \sim 1\;$ and $\;b \neq 1\;$ the kernel of $\;\delta^2\mathcal W_{\alpha^b, \beta^b}\big(f^b\big)\;$ is $2$-dimensional and consists of either $\; \varphi^b_1\;$ and $\;\tilde  \varphi^b_1\;$ for $\;b>1\;$  or $ \;\varphi^b_2\;$ and $\; \varphi^b_2\;$ for $\;b<1.\;$ Both choices of $\;b\;$ lead to M\"obius invariant surfaces. We summarize the results in the following Lemma:\\

\begin{Lem}
For $\;b \sim1,\;$ $\;b>1\;$ we have that $\;\alpha^b\;$ is uniquely determined by the kernel of 
$\;\delta^2 \mathcal W_{\alpha^b, \beta^b}\big(f^b\big)\;$ which is $2$ dimensional and spanned  (up to invariance) by the normal variations 

$$ \varphi^b_1=  \sin\big(\sqrt{2} (\tfrac{x}{r} + \tfrac{2y}{s})\big) \vec{n}^b\quad \text{and} \quad \tilde \varphi^b_1 =  \cos\big(\sqrt{2} (\tfrac{x}{r} + \tfrac{2y}{s})\big)\vec{n}^b.$$

\end{Lem}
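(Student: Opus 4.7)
The content of the Lemma is essentially a summary of the computation carried out above; my plan is to package that computation into a three-step argument that fixes the exact value of $\alpha^b$ and shows that the kernel is precisely the claimed $2$-plane.

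\emph{Step 1: Localise the potential kernel.} By Lemma \ref{spectralproperty}, every Fourier mode $\Phi^1 \in A_{k,l}(T^2_1)$ on which $\delta^2\mathcal W_{\alpha^1}(f^1)$ is strictly positive still produces a strictly positive value of $\delta^2\mathcal W_{\alpha,\beta^b}(f^b)$ for $b$ close to $1$ and $\alpha \le \alpha^1$; combined with the explicit determination of $\alpha^1$ in Lemma \ref{kernvonalpha}, this isolates the only possibly singular blocks, namely $A_{1,2}(T^2_b)\vec n^b$ and $A_{2,1}(T^2_b)\vec n^b$. Within each block, the bound \eqref{D^2Pi^1} on $\delta^2\bigPi^1$ is sharp only on the $2$-dimensional subspace selected by \eqref{Pi2=}; on its complement, $\delta^2\mathcal W_{\alpha^1}(f^1)$ is already strictly positive, so by continuity the same is true for $\delta^2\mathcal W_{\alpha,\beta^b}(f^b)$ with $\alpha \le \alpha^1$ and $b \sim 1$.

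\emph{Step 2: Compare the two candidate planes.} On the distinguished planes $V_i^b := \Span\{\varphi_i^b,\tilde\varphi_i^b\}$ ($i=1,2$) I write
\begin{equation*}
\delta^2\mathcal W_{\alpha,\beta^b}(f^b)(\varphi_i^b,\varphi_i^b) \;=\; A_i(b) - \alpha\,B_i(b),
\end{equation*}
with $B_i(b) := \delta^2\bigPi^1(f^b)(\varphi_i^b,\varphi_i^b)$ and $A_i(b)$ the remaining terms coming from $\delta^2\mathcal W$ and $\beta^b\,\delta^2\bigPi^2$. At $b=1$ the symmetry $(k,l)\leftrightarrow(l,k)$ yields $A_1(1)=A_2(1)$, $B_1(1)=B_2(1)$, with common ratio $\alpha^1$. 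The already established inequality
\begin{equation*}
\delta^2\mathcal W_{\alpha^1,\beta^b}(f^b)(\varphi_1^b,\varphi_1^b) \;<\; \delta^2\mathcal W_{\alpha^1,\beta^b}(f^b)(\varphi_2^b,\varphi_2^b) \;<\; 0
\end{equation*}
together with $B_i(b)>0$ and the continuity $B_1(b)\sim B_2(b)$ then implies $A_1(b)/B_1(b) < A_2(b)/B_2(b) < \alpha^1$ for $b > 1$ close to $1$. Hence, as $\alpha$ is decreased from $\alpha^1$, the form first becomes non-negative on $V_1^b$ at $\alpha^b := A_1(b)/B_1(b)$, where it is still strictly positive on $V_2^b$.

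\emph{Step 3: Conclude.} At $\alpha=\alpha^b$ the form $\delta^2\mathcal W_{\alpha^b,\beta^b}(f^b)$ is non-negative on all normal variations, strictly positive off $V_1^b$, and vanishes on $V_1^b = \Span\{\varphi_1^b,\tilde\varphi_1^b\}$; the explicit basis follows from \eqref{Pi2=} for $(k,l)=(1,2)$ after transport through the canonical isomorphism $A_{1,2}(T^2_1)\to A_{1,2}(T^2_b)$. The main obstacle is making the perturbation argument in Step 1 uniform in the Fourier index: one needs a spectral gap for $\delta^2\mathcal W_{\alpha^1}(f^1)$ away from its four-dimensional kernel, which on high modes is provided by the leading $\Delta^2$-term in \eqref{D^2W} and on the remaining finite set of low modes by inspection.
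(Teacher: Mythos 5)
Your three-step scheme reproduces the paper's own argument: isolate the $(1,2)$ and $(2,1)$ blocks using Lemma \ref{kernvonalpha} together with Lemma \ref{spectralproperty}, compare the two distinguished planes via the computed inequality $D^2\bigPi^2(f^b)(\varphi_1^b,\varphi_1^b) > D^2\bigPi^2(f^b)(\varphi_2^b,\varphi_2^b)$ combined with $\beta^b>0$ for $b>1$, and conclude that the form at $\alpha^b$ degenerates precisely on $\Span\{\varphi_1^b,\tilde\varphi_1^b\}$. The only genuine added value is your explicit flagging of the uniformity-in-$(k,l)$ issue (a spectral gap from the $\Delta^2$-term on high modes plus finite inspection on low modes), which the paper silently relies on; otherwise the route and the key estimates are the same.
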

Now, for $\;b\sim 1\;$ consider the reparametrization of the homogenous torus as a $(2,-1)$-equivariant surface

\begin{equation*}
\begin{split}
\tilde f^b: \C /\big(2\pi \Z + 2\pi \tfrac{2r^2 + isr}{4r^2 + s^2} \Z\big) & \longrightarrow S^3\subset \C^2, \\
 (\tilde x, \tilde y)&\longmapsto \begin{pmatrix}r e^{i2\tilde y +  \tfrac{i s \tilde x}{r}}, s e^{-i\tilde y+  \tfrac{   i  2r \tilde x}{s}}\end{pmatrix}.
 \end{split}
 \end{equation*}

Using these new coordinates the kernel of $\;\delta^2 \mathcal W_{\alpha^b, \beta^b}\big(f^b\big)\;$ for $\;b = \tfrac{s}{r}>1\;$ is given by

$$\Phi^b= \sin \big((\tfrac{s}{r}+ 4 \tfrac{r}{s})\tilde x\big), \quad \tilde \Phi^b = \cos\big((\tfrac{s}{r}+ 4 \tfrac{r}{s})\tilde x\big).$$

Thus infinitesimally the $\;\tilde y$-direction of the surface is not affected by a deformation with normal variation $\;\Phi^b\vec{n}^b,\;$ i.e., the $\;(2, -1)$-equivariance is infinitesimally preserved. Since the space of $\;(2,-1)$-equivariant surfaces and $\;(1,2)$-equivariant surfaces are isomorphic and differ only by the orientation of the surface and an isometry of $\;S^3\;$, we will consider $\;(1,2)$-equivariant surfaces for convenience. Moreover, it is important to note that for all real numbers $\;c_1,\;$ $c_2\;$ there exist $\;d_1, d_2 \in \R\;$ such that

\begin{equation}\label{additionstheoreme}
\begin{split}
c_1 \Phi_1^b + c_2 \tilde \Phi_1^b &= c_1 \sin\big((\tfrac{s}{r}+ 4 \tfrac{r}{s})\tilde x\big) + c_2 \cos\big((\tfrac{s}{r}+ 4 \tfrac{r}{s})\tilde x\big) \\
&= d_1 \sin\big((\tfrac{s}{r}+ 4 \tfrac{r}{s})\tilde x+ d_2\big) =  d_1\Phi_1^b\big((\tfrac{s}{r}+ 4 \tfrac{r}{s})\tilde x + d_2\big).
\end{split}
\end{equation}

Since homogenous tori $\;\tilde f^b\;$ satisfy $\;\tilde f^b(\tilde x + d_2,\tilde y) = M \tilde f^b(\tilde x, \tilde y),\;$ where $\;M\;$ is a isometry of $\;S^3,\;$ we obtain the following proposition reducing the kernel dimension of $\;\delta^2\mathcal W_{\alpha^b, \beta^b}\big(f^b\big)\;$ to $1$ (up to invariance). \\

\begin{Pro}\label{1d}

For a family of $f^b_{(s,t)} = \exp_{f^b}\big(t \varphi^b_1 + s \tilde \varphi^b_1\big)$ be a family of immersions from $T^2_b \longrightarrow S^3.$ Then there exist M\"obius transformations $\;M(s,t),\;$ reparametrizations $\;\sigma(s,t),\;$ and a function $\;c(s,t)\;$ such that

$$M(s,t) \circ f^b_{(s,t)} \circ \sigma(s,t)  = \exp_{f^b}\big(d_1(s,t) \varphi^b\big)$$

\end{Pro}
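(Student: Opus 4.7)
The plan is to reduce the two-parameter kernel to a one-parameter family by combining two ingredients: the trigonometric identity \eqref{additionstheoreme} and the translational symmetry of the homogeneous torus $f^b$ in the $\tilde x$-direction. No bifurcation or functional-analytic machinery is required; the statement is essentially a pointwise symmetry reduction that can be verified by direct substitution.

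First, by \eqref{additionstheoreme} I would rewrite the normal variation as
\[
t\varphi^b_1+s\tilde\varphi^b_1 \;=\; \big(t\Phi^b_1+s\tilde\Phi^b_1\big)\vec n^b \;=\; d_1(s,t)\,\Phi^b_1\!\big(\tilde x+\delta(s,t)\big)\,\vec n^b,
\]
with $d_1(s,t)=\sqrt{s^2+t^2}$ and $\delta(s,t)$ the phase shift produced by the addition formula for sine. Thus, up to a translation in the $\tilde x$-coordinate, the whole two-parameter variation is a scalar multiple of the single normal direction $\varphi^b:=\Phi^b_1\vec n^b$.

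Next I would invoke the equivariance of the homogeneous torus: the $\tilde x$-translation $T_\delta\colon(\tilde x,\tilde y)\mapsto(\tilde x+\delta,\tilde y)$ is realised ambiently by an isometry (hence M\"obius transformation) $M_\delta$ of $S^3$ with $f^b\circ T_\delta=M_\delta\circ f^b$, and correspondingly $(M_\delta)_*\vec n^b_{(\tilde x,\tilde y)}=\vec n^b_{T_\delta(\tilde x,\tilde y)}$. Setting $\sigma(s,t):=T_{-\delta(s,t)}$ and $M(s,t):=M_{\delta(s,t)}$, and using that an isometry of $S^3$ intertwines the exponential map via $M_\delta\circ\exp_p(v)=\exp_{M_\delta p}\!\big((M_\delta)_*v\big)$, a direct substitution yields
\[
M(s,t)\circ f^b_{(s,t)}\circ\sigma(s,t)(\tilde x,\tilde y) \;=\; \exp_{f^b(\tilde x,\tilde y)}\!\big(d_1(s,t)\Phi^b_1(\tilde x)\vec n^b(\tilde x,\tilde y)\big),
\]
which is exactly $\exp_{f^b}\!\big(d_1(s,t)\varphi^b\big)(\tilde x,\tilde y)$. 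Smooth dependence of $d_1,\delta,M,\sigma$ on $(s,t)$ is evident from the closed-form expressions.

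The only point that requires care is the bookkeeping of basepoints and normal directions after the substitution: upon precomposing with $T_{-\delta}$, the exponential is based at $f^b(\tilde x-\delta,\tilde y)$ in the direction $d_1\Phi^b_1(\tilde x)\vec n^b(\tilde x-\delta,\tilde y)$, and it is precisely the equivariance relations $M_\delta\circ f^b(\tilde x-\delta,\tilde y)=f^b(\tilde x,\tilde y)$ and $(M_\delta)_*\vec n^b(\tilde x-\delta,\tilde y)=\vec n^b(\tilde x,\tilde y)$ that re-base the exponential at the original point $f^b(\tilde x,\tilde y)$. This is the only place where the specific parametrisation \eqref{12fb} (or its $(2,-1)$-equivariant reformulation introduced before the statement) is used.
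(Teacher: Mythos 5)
Your proof is correct and takes essentially the same route as the paper: apply the addition formula \eqref{additionstheoreme} to collapse the two-parameter variation into a phase-shifted multiple of $\Phi^b_1$, then use the ambient isometry realizing the $\tilde x$-translation together with the reparametrization $T_{-\delta}$ to undo the phase shift. You spell out the intertwining of the exponential map and the normal-vector bookkeeping a bit more explicitly than the paper, but the argument is the same.
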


\begin{proof}

Let $\;\varphi^b = \big(s \Phi^b_1 + t \tilde \Phi^b_1\big) \vec{n}^b(\tilde x, \tilde y).\;$ Then
by Equation \eqref{additionstheoreme} we obtain real functions  $\;d_1(s,t)\;$ and $\;d_2(s,t)\;$ satisfying 

$$\varphi^b =\big(d_1(s,t) \Phi^b_1\big(\tilde x+ d_2(s,t)\big)\big) \vec{n}^b(\tilde x, \tilde y).$$

By definition of the homogenous tori there exist a isometry $\;M(s,t)\;$ of $\;S^3\;$ such that $\;M(s,t)\circ f^b = f^b(\tilde x+d_2(s,t), \tilde y).\;$ Thus $\;M\;$ induces a map, which we again denote by $M,$ on the normal vector $\;\vec{n}^b\;$ given by 

 $$M\circ \big(\vec{n}^b(x,y)\big) = \vec{n}^b(\tilde x + d_2(s,t), \tilde y).$$
 
 Therefore, $\;M \circ \varphi^b = \big(d_1 \Phi^b_1(\tilde x+ d_2)\big) \vec{n}^b(\tilde x+d_2, \tilde y)\;$ and with 
 
 $$\sigma(s,t) :  T^2_b \longrightarrow T^2_b , \quad (\tilde x, \tilde y) \longmapsto (\tilde x - d_2, \tilde y )$$

\noindent 
we hence obtain the desired property.
\end{proof}

\section{A classification of constrained Willmore tori}
Before classifying all solutions to the Euler-Lagrange equation \eqref{ELana} with control on the Lagrange multiplier, we first show a technical lemma that allow us to use Bifurcation Theory. 
\begin{Lem}\label{n-thderivative}
For $\;b \sim 1\;$ we obtain with the notations introduced in Section \ref{strategy} and \ref{stability}

 $$\delta^3 \mathcal W_{\alpha^b, \beta^b}\big(f^{b}\big)\big(\varphi^b, \varphi^b, \varphi^b\big) = 0.$$
 
Moreover, the fourth variation of the Willmore functional satisfies

$$ \delta^4 \mathcal W_{\alpha^b, \beta^b}\big(f^{b}\big)\big(\varphi^b,\cdots, \varphi^b\big) + \delta^3 \mathcal W_{\alpha^b, \beta^b}\big(f^{b}\big)\big(\del_{\tilde a}\varphi(a)|_{\tilde a=0}, \varphi^b, \varphi^b\big) \neq 0.
\footnote{Recall that $\;\varphi(a)= \big(\del_{\tilde a} \tilde f^{(a,b)}\big)^\perp$ for as family $\;\tilde f^{(a,b)}\;$ deforming $f^b.$}$$
\end{Lem}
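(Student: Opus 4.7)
The plan is to analyse both assertions along the candidate family $f(\tilde a) := f_{(\tilde a^2, b)}$ of Theorem~\ref{explicitcandidates}, which is $C^2$ in $\tilde a$ with $f(0) = f^b$, $\partial_{\tilde a} f|_0 = \varphi^b$, $\bigPi^1(f(\tilde a)) = \tilde a^2$ and $\bigPi^2(f(\tilde a)) = b$. Two structural facts will be used throughout: (i) $f^b$ is critical for $\mathcal{W}_{\alpha^b, \beta^b}$ since $\delta \bigPi^1(f^b) = 0$ and $\delta \mathcal{W}(f^b) = \beta^b \delta \bigPi^2(f^b)$, and (ii) positive semidefiniteness of $\delta^2 \mathcal{W}_{\alpha^b, \beta^b}(f^b)$ together with $\varphi^b$ being a zero direction forces $\delta^2 \mathcal{W}_{\alpha^b, \beta^b}(f^b)(\varphi^b, \cdot) \equiv 0$ (by Cauchy--Schwarz).

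For the first claim, set $G(\tilde a) := \mathcal{W}_{\alpha^b, \beta^b}(f(\tilde a)) = \omega(\tilde a^2, b) - \alpha^b \tilde a^2 - \beta^b b$. Since $G$ depends on $\tilde a$ only through $\tilde a^2$, its Taylor expansion at $\tilde a = 0$ carries no $\tilde a^3$ term. Expanding $G(\tilde a) - G(0)$ directly from $\mathcal W_{\alpha^b, \beta^b}(f^b + \xi)$ with $\xi = \tilde a \varphi^b + \tfrac{\tilde a^2}{2} f''(0) + o(\tilde a^2)$ and collecting the $\tilde a^3$-coefficient, only $\tfrac{1}{6}\, \delta^3 \mathcal{W}_{\alpha^b, \beta^b}(f^b)(\varphi^b, \varphi^b, \varphi^b)$ survives (the cross term $\delta^2 \mathcal{W}_{\alpha^b, \beta^b}(f^b)(\varphi^b, f''(0))$ vanishes by (ii)), yielding the first assertion. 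A more conceptual alternative exploits the reflection symmetry of the homogenous torus in the $(2,-1)$-parametrization: the isometry $\mathrm{diag}(e^{is\pi/(cr)}, e^{i2r\pi/(cs)})$ of $S^3$ combined with the translation $\tilde x \mapsto \tilde x + \pi/c$ stabilises $\tilde f^b$ while reversing the sign of $\varphi^b$, and invariance of $\mathcal{W}_{\alpha^b, \beta^b}$ then forces the symmetric trilinear form $\delta^3 \mathcal{W}_{\alpha^b, \beta^b}(f^b)$ to vanish on $(\varphi^b, \varphi^b, \varphi^b)$.

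For the second claim, I recognise the target as $m''(0)$ for the scalar function $m(\tilde a) := \delta^2 \mathcal{W}_{\alpha^b, \beta^b}(f(\tilde a))(\varphi^b, \varphi^b)$: by (ii) one has $m(0) = 0$, the first claim gives $m'(0) = 0$, and the chain rule yields
\[
m''(0) = \delta^4 \mathcal{W}_{\alpha^b, \beta^b}(f^b)(\varphi^b, \ldots, \varphi^b) + \delta^3 \mathcal{W}_{\alpha^b, \beta^b}(f^b)(\partial_{\tilde a}\varphi|_0, \varphi^b, \varphi^b).
\]
To prove $m''(0) \neq 0$, I extract two identities for the three unknowns $A := \delta^2\mathcal{W}_{\alpha^b,\beta^b}(f^b)(f''(0), f''(0))$, $B := \delta^3 \mathcal{W}_{\alpha^b,\beta^b}(f^b)(f''(0), \varphi^b, \varphi^b)$, and $C := \delta^4 \mathcal{W}_{\alpha^b,\beta^b}(f^b)(\varphi^b, \ldots, \varphi^b)$: matching the $\tilde a^4$-coefficient of $G$ computed in two ways gives $3A + 6B + C = 12 \omega_{aa}(0,b)$, while differentiating the Euler--Lagrange equation $\delta\mathcal{W}(f(\tilde a)) = \alpha(\tilde a)\delta\bigPi^1 + \beta(\tilde a)\delta\bigPi^2$ twice in $\tilde a$ and testing with $f''(0)^\perp$ — using $\alpha'(0) = \beta'(0) = 0$, which follows from the joint $C^2$-regularity of $\omega$ in $a$ — gives $A + B = -2 \omega_{ab}(0,b)\, \delta^2 \bigPi^2(f^b)(\varphi^b, \varphi^b)$. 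Eliminating $A$ expresses $m''(0) = C + B$ as a linear combination of $\omega_{aa}(0,b)$, $\omega_{ab}(0,b)\, \delta^2\bigPi^2(f^b)(\varphi^b,\varphi^b)$, and the residual unknown $B$; the latter is then computed directly using the Fourier decomposition of normal variations at homogenous tori from Section~\ref{stability} together with the explicit description of $f''(0)^\perp$ dictated by the integrable-systems construction of $f_{(a,b)}$ and the parametrization~\eqref{12fb} of $f^b$.

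The main obstacle will be establishing strict concavity $\omega_{aa}(0,b) < 0$ at the endpoint, which is not immediate from the (non-strict) concavity of Theorem~\ref{MainTheorem}. Strictness should follow by combining the strict monotonicity $\alpha(\tilde a) \nearrow \alpha^b$ with the real-analyticity of $\mathcal W(f_{(a,b)})$ for $a > 0$ asserted in Theorem~\ref{explicitcandidates}: if $\omega_{aa}(0, b) = 0$, then by concavity and analyticity $\omega_{aa}(\cdot, b)$ would have to vanish identically in a right neighbourhood of $0$, contradicting strict monotonicity of $\alpha = \omega_a$. The remaining subtlety is carefully tracking which specific linear combination of the two algebraic identities above yields $C + B$ rather than $C + 3B$ (the latter being the naive output of differentiating the EL equation thrice and testing with $\varphi^b$); this is precisely why the direct computation of $B$ from the explicit $\varphi^b$ is needed to close the argument.
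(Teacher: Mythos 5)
Your treatment of the first assertion is correct and gives a cleaner alternative route than the paper: you observe that $G(\tilde a) := \mathcal W_{\alpha^b,\beta^b}(f(\tilde a))$ depends on $\tilde a$ only through $a=\tilde a^2$, so $G'''(0)=0$, and then you isolate $\delta^3\mathcal W_{\alpha^b,\beta^b}(f^b)(\varphi^b,\varphi^b,\varphi^b)$ from $G'''(0)$ using criticality and the Cauchy--Schwarz consequence $\delta^2\mathcal W_{\alpha^b,\beta^b}(f^b)(\varphi^b,\cdot)\equiv 0$ on the kernel direction. The paper instead obtains the same conclusion by differentiating the tested Euler--Lagrange equation \eqref{firstvariation} twice in $\tilde a$ and invoking the explicit vanishing $\lim_{\tilde a\to 0}\partial_{\tilde a}\alpha_{(a,b)}=\lim_{\tilde a\to 0}\partial_{\tilde a}\beta_{(a,b)}=0$ proved for the candidate family in \cite[Lemma~2.2]{HelNdi2}; morally these are the same input (evenness of the Lagrange multipliers in $\tilde a$), and your packaging is perfectly acceptable. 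Your symmetry-based alternative for this step is plausible but not needed.

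For the second assertion, however, your argument has a genuine gap and takes a detour the paper does not take. You correctly recognize the target quantity as $m''(0)$ with $m(\tilde a)=\delta^2\mathcal W_{\alpha^b,\beta^b}(f(\tilde a))(\varphi^b,\varphi^b)$, and you are also right to be suspicious of the coefficient mismatch between ``differentiating the tested EL equation thrice'' and the formula as stated. But you then reduce the problem to three unknowns $A,B,C$, write two identities, and declare that ``the residual unknown $B$ is computed directly using the Fourier decomposition\ldots and the explicit description of $f''(0)^\perp$ dictated by the integrable-systems construction.'' That computation is never performed; the nonvanishing of $m''(0)$ is therefore never actually established. The paper avoids the $A,B,C$ system entirely: it derives
$$\delta^4\mathcal W_{\alpha^b,\beta^b}(f^b)(\varphi^b,\ldots,\varphi^b)+\delta^3\mathcal W_{\alpha^b,\beta^b}(f^b)(\partial_{\tilde a}\varphi|_0,\varphi^b,\varphi^b)=\lim_{\tilde a\to 0}\partial^2_{\tilde a\tilde a}\alpha_{(a,b)}\,\delta^2\bigPi^1(\varphi^b,\varphi^b)+\lim_{\tilde a\to 0}\partial^2_{\tilde a\tilde a}\beta_{(a,b)}\,\delta^2\bigPi^2(\varphi^b,\varphi^b)$$
and closes the argument purely from sign information on the right-hand side: $\lim\partial^2_{\tilde a\tilde a}\alpha_{(a,b)}=2\partial_a\alpha_{(a,b)}|_{a=0}<0$ (strict, from \cite{HelNdi2}), $\delta^2\bigPi^1(f^b)(\varphi^b,\varphi^b)=2>0$, and $\lim\partial^2_{\tilde a\tilde a}\beta_{(a,b)}=\partial_b\alpha^b$ vanishes at $b=1$, so for $b\sim1$ the right side is strictly negative. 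Your ``strict concavity via analyticity'' remark is really aiming at this same inequality $\partial_a\alpha_{(a,b)}|_{a=0}<0$, but taking it as a given input from the candidate construction is both simpler and what the paper actually does. In short: claim one, correct by a different (valid) route; claim two, reduced to an uncomputed quantity and therefore incomplete.
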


 \begin{proof}
For fixed $\;b \sim 1\;$ and candidate surfaces $\;f_{(a,b)}\;$ constructed in Section 4 let $\;\varphi(a) := \big(\del_{\sqrt{a}} f_{(a,b)}\big) ^\perp.\;$ This implies 

\begin{equation}\label{pieq}
\delta \bigPi^2\big(f_{(a,b)}\big)\varphi(a) = 0 \quad \text{ and } \quad \delta \bigPi^1\big(f_{(a,b)}\big) \big(\varphi(a)\big) = 2 \sqrt{a}.
\end{equation} 

Further, recall that $\;\varphi^b = \lim_{a \rightarrow 0}\varphi(a)\;$ and  $\;\alpha_{(a,b)}\;$ and $\;\beta_{(a,b)}\;$ are the Lagrange multipliers of the candidate surfaces with

\begin{equation}\label{alphabetaeq}
\alpha^b= \lim_{a \rightarrow 0} \alpha_{(a,b)} \quad \text{ and } \quad \beta^b= \lim_{a \rightarrow 0} \beta_{(a,b)}.
\end{equation} 

The surfaces $\;f_{(a,b)}\;$ all satisfy the Euler Lagrange equation  \eqref{ELana}. Therefore, testing  \eqref{ELana} with $\;\varphi(a)\;$ gives

\begin{equation}\label{firstvariation}
\delta \mathcal W\big(f_{(a,b)}\big)\big(\varphi(a)\big) = \alpha_{(a,b)} \delta\bigPi^1\big(f_{(a,b)}\big) \big(\varphi(a)\big) + \beta_{(a,b)} \delta\bigPi^2\big(f_{(a,b)}\big)\big(\varphi(a)\big).
\end{equation}

Differentiate the above equation with respect to $\;\sqrt{a}\;$ together with the Euler-Lagrange equation  yields

\begin{equation}
\begin{split}
\delta^2 \mathcal W\big(f_{(a,b)}\big)\big(\varphi, \varphi\big) &= \alpha_{(a,b)} \delta^2 \bigPi^1\big(f_{(a,b)}\big)\big(\varphi, \varphi\big)  + \beta_{(a,b)} \delta^2 \bigPi^2\big(f_{(a,b)}\big)\big(\varphi, \varphi\big)\\& +\del_{\sqrt{a}}  \alpha_{(a,b)} \delta \bigPi^1\big(f_{(a,b)}\big)\big(\varphi\big) + \del_{\sqrt{a}} \beta_{(a,b)} \delta \bigPi^2\big(f_{(a,b)}\big)\big(\varphi\big). 
\end{split}
\end{equation}

Differentiating once again and evaluating at $\;a = 0\;$ combined with \eqref{pieq} and \eqref{alphabetaeq} results in the following equation for the third derivative:

\begin{equation}
\begin{split}
\delta^3 \mathcal W \big(f^b\big)\big(\varphi^b, \varphi^b, \varphi^b\big) &=  \alpha^b\delta^3 \bigPi^1\big(f^b\big)\big(\varphi^b, \varphi^b, \varphi^b\big) + \beta^b\delta^3 \bigPi^2\big(f^b\big)\big(\varphi^b, \varphi^b, \varphi^b\big)\\ &+ 2 \lim_{a \rightarrow 0}\del_{\sqrt{a}} \alpha_{(a,b)} \delta^2 \bigPi^1\big(f^b\big)\big(\varphi^b, \varphi^b\big) + 2\lim_{a \rightarrow 0}\del_{\sqrt{a}} \beta_{(a,b)} \delta^2 \bigPi^2\big(f^b\big) \big(\varphi^b, \varphi^b\big).
\end{split}
\end{equation}

By \eqref{pieq} we have $\;\delta^2 \bigPi^1\big(f^b\big)\big(\varphi^b, \varphi^b\big) = 2\;$ and by \cite[Lemma 2.2] {HelNdi2} the candidates satisfies

 $$\lim_{a\rightarrow 0}\del_{\sqrt{a}}\alpha_{(a,b)}=0 \quad \text{ and } \quad \lim_{a\rightarrow 0}\del_{\sqrt{a}}\beta_{(a,b)}=0.$$

Therefore, we obtain 

$$\delta^3 \mathcal W_{\alpha^b, \beta^b} \big(f^b\big)\big(\varphi^b, \varphi^b, \varphi^b\big) =0.$$

Differentiating the equation \eqref{firstvariation} three times and taking the limit for $\;\tilde a: = \sqrt{ a} \longrightarrow 0\;$ gives the following formula:

\begin{equation}
\begin{split}
\delta^4 \mathcal W_{\alpha^b, \beta^b}\big(f^b\big)\big(\varphi^b,\cdots, \varphi^b\big) + \delta^3 \mathcal W_{\alpha^b, \beta^b}\big(f^b\big)\big(\del_{\tilde a}\varphi|_{\tilde a= 0}, \varphi^b, \varphi^b\big) = \\\lim_{\tilde a \rightarrow 0}\del^2_{\tilde a \tilde{a}} \alpha_{(a,b)} \delta^2\bigPi^1\big(f^b\big)\big(\varphi^b, \varphi^b\big)  + \lim_{\tilde a \rightarrow 0}\del^2_{\tilde a \tilde{a}} \beta_{(a,b)} \delta^2\bigPi^2\big(f^b\big)\big(\varphi^b, \varphi^b\big).
\end{split}
\end{equation}

We have computed for the candidates that 

$$\lim_{\tilde a\rightarrow 0}\del^2_{\tilde a \tilde{a}}\beta_{(a,b)} = \del_b \alpha^b \leq 0 \quad \text{and} \quad  \lim_{\tilde a\rightarrow}\del^2_{\tilde a \tilde{a}}|_{\tilde a=0}  \alpha_{(a,b)} = 2 \del_a|_{a=0}  \alpha_{(a,b)}<0.$$

 Together with $$\;\delta^2 \bigPi^1\big(f^b\big) (\varphi^b, \varphi^b) >0\quad \text{ and }\del_b \alpha^b|_{b=1} = 0\;$$ we conclude that the second formula of the Proposition holds for $\;b \sim 1$.

\end{proof}

Now we can turn to the main theorem of the section.

\begin{The} \label{cwm}
For $\;b\sim 1\;$ and $\;b \neq 1\;$ fixed there exist a $\;a^b >0\;$ such that there exists a unique branch  of solution (up to invariance) to the Euler-Lagrange equation

 \begin{equation}\label{locpro}
\begin{split}
\delta \mathcal W _{\alpha, \beta} (f) = 0, \quad &\text{with} \quad \alpha \sim \alpha^b, \;\alpha \leq \alpha^b, \;\beta \sim \beta^b, \;f \sim f^b\; \text{ smoothly}\\
&\text{and} \quad  \bigPi^1(f) = a, \;\bigPi^2(f) = b\; \text{ with } \;b \sim 1 \;\text{ fixed and } \;a \in [0, a^b).
\end{split}
\end{equation}

In particular, for $\;\alpha = \alpha^b\;$ and $\;\bigPi^2(f) = b\;$ the only solution of \eqref{locpro} is the homogenous torus.
\end{The}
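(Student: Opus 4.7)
The plan is to perform a Lyapunov--Schmidt reduction of the Euler--Lagrange equation near $(f^b, \alpha^b, \beta^b)$ and then analyze the resulting finite-dimensional branching equation using the higher-order derivative information of Lemma \ref{n-thderivative}. This is a Crandall--Rabinowitz-type bifurcation from a simple eigenvalue, but of pitchfork shape: the vanishing of the third derivative forces the leading nontrivial term in the reduced equation to be quadratic in the amplitude.

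Working on a slice in $W^{4,2}$ transverse to the full invariance group, Proposition \ref{1d} identifies $\ker \delta^2 \mathcal{W}_{\alpha^b,\beta^b}(f^b)$ with the one-dimensional span of $\varphi^b$. Write a nearby immersion as $f = \exp_{f^b}(s\varphi^b + \psi)$ with $\psi \perp_{L^2} \varphi^b$, and let
$$F(s,\psi,\alpha,\beta) := \delta \mathcal{W}(f) - \alpha\, \delta \bigPi^1(f) - \beta\, \delta \bigPi^2(f).$$
Since $\delta^2\mathcal{W}_{\alpha^b,\beta^b}(f^b)$ is an isomorphism on $(\varphi^b)^{\perp}$ and $\delta\bigPi^2(f^b)$ is non-degenerate, the implicit function theorem solves the $(\varphi^b)^{\perp}$-projection of $F=0$ together with the constraint $\bigPi^2(f) = b$ for real-analytic $\psi = \psi(s,\alpha)$ and $\beta = \beta(s,\alpha)$, with $\psi(0,\alpha^b)=0$ and $\beta(0,\alpha^b) = \beta^b$.

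What remains are two scalar equations in two unknowns $(s,\alpha)$: the $\varphi^b$-projection of $F=0$, call it $g(s,\alpha)=0$, and the conformal-class constraint $\bigPi^1(f) = a$. Since $\delta\bigPi^1(f^b)=0$ while $\delta^2\bigPi^1(f^b)(\varphi^b,\varphi^b)>0$ by the orientation convention of Section \ref{stability}, the latter reads $a = cs^2 + O(s^3)$ with $c > 0$, so $s$ is a real-analytic function of $\tilde a := \sqrt a$. Expanding $g$: the order-$s$ coefficient vanishes because $\varphi^b\in \ker \delta^2 \mathcal{W}_{\alpha^b,\beta^b}(f^b)$; the order-$s^2$ coefficient vanishes by the first identity of Lemma \ref{n-thderivative}; and the coefficient of $(\alpha-\alpha^b)$ equals $-\delta^2\bigPi^1(f^b)(\varphi^b,\varphi^b) \neq 0$. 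Hence the implicit function theorem solves $g=0$ uniquely as $\alpha - \alpha^b = \mu s^2 + O(s^3)$, with $\mu$ determined (and rendered non-zero) by the second, non-vanishing statement of Lemma \ref{n-thderivative}. The sign $\mu \leq 0$, equivalently $\alpha \leq \alpha^b$, follows by comparison with the candidate surfaces $f_{(a,b)}$ of \cite{HelNdi2}: they satisfy the same reduced equation and obey $\alpha_{(a,b)} \nearrow \alpha^b$, so by IFT uniqueness they coincide with the branch constructed here.

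The principal obstacle is executing the reduction on a genuinely invariance-free slice so that the kernel remains one-dimensional and so that the conformal-class constraints do not reintroduce degeneracies; in particular, the $(2,-1)$-equivariance symmetry used in Proposition \ref{1d} must be preserved by the projection $\psi(s,\alpha)$. A secondary technical point is that the natural analytic parameter is $\tilde a=\sqrt a$ rather than $a$, so regularity is $C^\omega$ in $\tilde a$ but only $C^2$ in $a$ at $a=0$, which matches the asymmetric regularity encoded in Lemma \ref{mainobservation}. Once the branch is constructed, the last clause of the theorem is immediate: if $\alpha = \alpha^b$, $\bigPi^2(f) = b$, and $f\sim f^b$, then $(s,\alpha-\alpha^b)$ lies on the branch, and $\alpha - \alpha^b = \mu s^2 + O(s^3)$ with $\mu\neq 0$ forces $s=0$, i.e.\ $f = f^b$ and $a = 0$.
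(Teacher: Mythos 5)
Your plan is essentially the same Lyapunov--Schmidt plus bifurcation-from-a-simple-eigenvalue argument that the paper carries out (auxiliary/bifurcation splitting, reduction of the kernel to $\Span\{\varphi^b\}$ via Proposition \ref{1d}, use of Lemma \ref{n-thderivative} for the vanishing of the cubic term and the non-vanishing of the quartic combination, identification of the branch with the candidates of \cite{HelNdi2}), merely packaged slightly differently: you impose the $\bigPi^2=b$ constraint already when solving the auxiliary part, whereas the paper first produces $V(m,\alpha,\beta,t,s)$ and only fixes $\bigPi^2$ and $\bigPi^1$ in its Step (4). Both routes are fine.

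One step needs tightening. You claim that, in the expansion of the reduced scalar $g(s,\alpha)$, ``the coefficient of $(\alpha-\alpha^b)$ equals $-\delta^2\bigPi^1(f^b)(\varphi^b,\varphi^b)\neq 0$,'' and from this plus the vanishing of the $s$ and $s^2$ coefficients you conclude via IFT that $\alpha-\alpha^b=\mu s^2+O(s^3)$. As written these do not fit together: if the linear coefficient of $(\alpha-\alpha^b)$ were nonzero while the $s$ and $s^2$ coefficients vanish, IFT would give $\alpha-\alpha^b=O(s^3)$, not $O(s^2)$. In fact $\partial_\alpha g(0,\alpha^b)=0$, because $g(0,\alpha)\equiv 0$ for all $\alpha\sim\alpha^b$: $f^b$ solves the Euler--Lagrange equation for every $\alpha$ precisely because $\delta\bigPi^1(f^b)=0$ (so $\psi(0,\alpha)=0$, $\beta(0,\alpha)=\beta^b$). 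What is non-zero is the mixed derivative $\partial_s\partial_\alpha g(0,\alpha^b)=-\delta^2\bigPi^1(f^b)(\varphi^b,\varphi^b)$. The correct bookkeeping is to factor $g(s,\alpha)=s\,h(s,\alpha)$ (which is legitimate exactly because of $g(0,\cdot)\equiv 0$), note $h(0,\alpha^b)=0$, $\partial_s h(0,\alpha^b)=0$ (this is the content of the first identity of Lemma \ref{n-thderivative}), $\partial^2_s h(0,\alpha^b)\neq 0$ (the second identity), and $\partial_\alpha h(0,\alpha^b)=\partial_s\partial_\alpha g(0,\alpha^b)\neq 0$; then IFT applied to $h$ gives the pitchfork $\alpha-\alpha^b=\mu s^2+O(s^3)$. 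With that correction your argument is the one in the paper. Your remark that the sign $\mu\leq 0$ is read off by matching the branch against the known candidates $f_{(a,b)}$ is also what the paper effectively does (it is baked into the proof of the inner Lemma giving $\partial^3_{ttt}\Phi\cdot\varphi^b<0$). Finally, you gloss over the Möbius/reparametrization reduction with ``work on a slice transverse to the full invariance group''; the paper spends its Step (3) verifying that the quotient is well-behaved and that the conformal-class map survives the quotient, and some version of that work is needed to make the one-dimensional reduction and the subsequent conformal-class inversion legitimate.
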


\begin{proof}
We prove the above theorem using  Bifurcation Theory from Non Linear Analysis, more precisely bifurcation from simple eigenvalues, see \cite{AP}.

We subdivide the proof into the following four steps:
 \begin{enumerate}
\item the splitting of the Euler-Lagrange equation \eqref{locpro} into an auxiliary and a bifurcation part,\\
\item classification of all solutions to the auxiliary equation,\\
\item classification of all solutions to the bifurcation equation,\\
\item identification of the  Teichm\"uller class of the previously obtained solutions.
\end{enumerate}

We first fix some further notations: we will work on the following Sobolev space given by

$$W^{4,2}\big(T^2_b, S^3\big) := \big\{\;V : T_b^2 \longrightarrow S^3 \subset \R^4 \; \mathlarger{|} \;\text{ each } V^i \in W^{4,2}\big(T^2_b, \R\big)\;\big \},$$

where $\;W^{4,2}\big(T^2_b, \R\big)\;$ is the usual Sobolev space, namely

\begin{equation*}
\begin{split}
W^{4,2}\big(T^2_b, \R\big) := \big\{\; V: T^2_b \longrightarrow \R\;\mathlarger{|}\; V \text{ and its derivatives up to order 4 are all } \\
L^2 \text{ integrable with respect to } g_b = (f^b)^*\big(g_{S^3}\big)\;\big\}.
\end{split}
\end{equation*}

Since tangential variations only lead to a reparametrization of the surface preserving $\;\mathcal W\;$ and $\;\bigPi\;$ we can restrict ourselves to the space

$$W^{4,2, \perp}\big(T^2_b, S^3\big) := \big\{ \; V\in W^{4,2}\big(T^2_b, S^3\big)\; \mathlarger{|}\; V \perp df^b \; \text{ on }\; T^2_b\; \big\}.$$

Further, for an appropriate neighborhood $\;U(0)\;$ of 

$$0 \;\in\; <\varphi^b, \tilde \varphi^b>^{\perp, W^{4,2,\perp}}:= \big\{ \text{ orthogonal complement of } \;<\varphi^b, \tilde \varphi^b> \;\text{ in } W^{4,2,\perp} \text{ with } W^{4,2}\text{-topology }\big\}$$

we consider the map

$$\Phi : U(0) \times \R^2 \times \R \times \R\longrightarrow L^{2, \perp}\big(T_b^2, S^3\big) := L^2\big(T_b^2, S^3\big) \cap W^{4,2,\perp}\big(T_b^2, S^3\big),$$

given by 

\begin{equation*}
\begin{split}
\Phi(V, \alpha, \beta, t, s) &= \delta \mathcal W_{\alpha, \beta}\big(\exp_{ f^b} \big( V + t\varphi^b+ s \tilde \varphi^b\big)\big) 
\\ &=\delta \mathcal W\big( \exp_{ f^b} \big( V + t\varphi^b+ s \tilde \varphi^b\big)\big)  \\&- \alpha \delta \bigPi^1\big( \exp_{ f^b} \big( V + t\varphi^b+ s \tilde \varphi^b\big)\big) \\&- \beta \delta \bigPi^2\big( \exp_{ f^b} \big( V + t\varphi^b+ s \tilde \varphi^b\big)\big),
\end{split}
\end{equation*}

where $\;L^2\big(T^2_b, S^3\big) := \big\{ f: T^2_b \longrightarrow S^3 \;|\; f^i \in L^2\big(T^2_b, \R\big)\big\}\;$ and $\;L^2\big(T^2_b, \R\big)\;$ is the usual $\;L^2$-Lebesgue space. 
By \cite{NdiayeSchätzle1} the map $\;\Phi\;$ is smooth in $\;W^{4,2}$-topology and the solutions of \eqref{locpro} are exactly the zero locus of $\;\Phi.$

\subsection{Step (1)} $\ $\\We first observe that

$$\Phi\big(0, \alpha^b, \beta^b, 0, 0\big) = \delta \mathcal W_{\alpha^b, \beta^b}\big(f^b\big) = 0.$$ 

Moreover, $\;\alpha^b\;$ is chosen such that the homogenous tori are stable with respect to the functional  $\;\mathcal W_{\alpha^b. \beta^b}\;$, see Section \ref{stability}, and we have 

\begin{equation}\label{positivity1}
\partial_V \Phi \big(0, \alpha^b, \beta^b, 0, 0\big) \cdot Z = \delta^2 \mathcal W_{\alpha^b, \beta^b}\big(f^b\big)\big(Z, . \big).
\end{equation}

Moreover, $\;\partial_V \Phi \big(0, \alpha^b, \beta^b, 0, 0\big)\;$ is a Fredholm operator of index $\;0\;$ by \cite{NdiayeSchätzle1}. The stability computations in Section \ref{stability} further shows 

\begin{equation}\label{positivity2}
\begin{split}
&\delta^2 \mathcal W_{\alpha^b, \beta^b}\big(f^b\big) \big(Z,Z\big) \geq 0,  \\
\text{ and moreover } \quad \quad &\delta^2 \mathcal W_{\alpha^b, \beta^b}\big(f^b\big) \big(Z,Z\big) = 0 \\
\Leftrightarrow \; \quad \quad&Z \in \;<\varphi^b, \tilde \varphi^b> \oplus \text{ Moeb}_{f^b}T^2_b \oplus T_{f^b} T^2_b.
\end{split}
\end{equation}

Thus we obtain with the same arguments in the proof of equation (3.20) of \cite{NdiayeSchätzle1} that

\begin{equation}\label{I} 
\text{Ker }\big(\partial_V \Phi (0, \alpha^b, \beta^b, 0, 0)\big) =\; <\varphi^b, \tilde \varphi^b>  \oplus \text{ Moeb}_{f^b}T^2_b \oplus T_{f^b}T^2_b.
\end{equation}

On the other hand, using the symmetry of $\;\delta^2 \mathcal W_{\alpha^b, \beta^b}\big(f^b\big)\;$ and by the arguments of the proof of formula (3.21) of \cite{NdiayeSchätzle1} we get

\begin{equation}\label{II}
<\varphi^b, \tilde \varphi^b>\oplus \text{ Moeb}_{f^b}T^2_b \perp \text{ Im}\big(\partial_V \Phi (0, \alpha^b, \beta^b, 0)\big)  \text{ in } L^{2, \perp}\big(T_b^2, S^3\big).
\end{equation}

However, since $\;\partial_V \Phi \big(0, \alpha^b, \beta^b, 0, 0\big)\;$ is Fredholm with index $\;0\;$ we obtain by \eqref{I}

\begin{equation}\label{III}
\begin{split}
\dim\left( \bigslant{L^{2,\perp}\big(T_b^2, S^3\big)}{\text{ Im}\Big(\partial_V \Phi \big(0, \alpha^b, \beta^b, 0, 0\big) \Big)} \right) = \dim \big(\text{ Moeb}_{f^b} T_b^2 \; \oplus<\varphi^b, \tilde \varphi^b>\big) \\
= \dim \left ( \bigslant{L^{2,\perp}\big(T_b^2, S^3\big)}{\big(\text{ Moeb}_{f^b} T_b^2  \;\oplus <\varphi^b, \tilde \varphi^b>\big)^{\perp, L^{2,\perp}\big(T_b^2, \;S^3\big)}}\right).
\end{split}
\end{equation}

Together with Property \eqref{II} this yields 

$$\text{ Im}\Big(\partial_V \Phi \big(0, \alpha^b, \beta^b, 0, 0\big) \Big) = \big(\text{Moeb}_{f^b} T_b^2 \; \oplus <\varphi^b, \tilde \varphi^b>\big)^{\perp, L^{2,\perp}\big(T_b^2, \;S^3\big)}.$$

Let

$$Y := \big(\text{Moeb}_{f^b} T_b^2  \;\oplus <\varphi^b, \tilde \varphi^b>\big)^{\perp, L^{2, \perp}\big(T_b^2, \; S^3\big)}.$$

Since $\;\text{Moeb}_{f^b} T_b^2 \;\oplus\; <\varphi^b, \tilde \varphi^b>\;$ is finite dimensional we obtain 

$$L^2\big(T_b^2, S^3\big)^\perp = Y \oplus \text{ Moeb}_{f^b} T_b^2  \;\oplus \;<\varphi^b, \tilde \varphi^b>,$$

and thus

$$L^2\big(T_b^2, S^3\big) =  Y \oplus \text{ Moeb}_{f^b} T_b^2 \; \oplus <\varphi^b, \tilde \varphi^b> \;\oplus \; T_{f^b}T^2_b.$$

The above splitting still holds (though not as orthogonal decomposition) for 

$$V \in U(0) \subset  W^{4,2, \perp}\big(T^2_b, S^3\big) \subset \mathcal C^1\big(T_b^2, S^3\big)$$

and $\;t,\; s\;$ small (see proposition B.3 of \cite{NdiayeSchätzle1}), i.e, 

\begin{equation}\label{IV}
L^2\big(T_b^2, S^3\big) =  Y \oplus \text{ Moeb}_{\exp_{ f^b} \big( V + t\varphi^b+ s \tilde \varphi^b\big)} T_b^2 \oplus <\varphi^b, \tilde \varphi^b> \oplus T_{\exp_{ f^b} \big( V + t\varphi^b+ s \tilde \varphi^b\big)}T^2_b.
\end{equation}

On the other hand, since $\;\text{ Moeb}_{\exp_{ f^b} \big( V + t\varphi^b+ s \tilde \varphi^b\big)} T_b^2 \oplus<\varphi^b, \tilde \varphi^b>\;$ is finite dimensional we obtain for 

$$X:= \big(\text{ Moeb}_{f^b}T^2_b \oplus <\varphi^b, \tilde \varphi^b>\big)^{\perp, W^{4,2, \perp}\big(T^2_b, \; S^3\big)} \subset  W^{4,2, \perp}\big(T^2_b, \; S^3\big)$$

an analogous splitting for $\;W^{4,2},\;$ i.e., 

$$X \oplus \text{ Moeb}_{f^b} T_b^2 \;\oplus <\varphi^b, \tilde \varphi^b>\;= W^{4,2, \perp}(T^2_b, S^3).$$

To continue we define the following projection maps:

\begin{equation}
\begin{split}
&\bigPi_Y :  L^2\big(T^2_b, S^3\big)^\perp \longrightarrow Y,\\
&\bigPi_{\text{Moeb}_{f^b}T^2_b\oplus <\varphi^b, \tilde \varphi^b>} :  W^{4,2, \perp}\big(T^2_b, S^3\big) \longrightarrow \text{ Moeb}_{f^b}T^2_b \; \oplus <\varphi^b, \tilde \varphi^b>,\\
\text{ and } \quad &\bigPi_X : W^{4,2, \perp}\big(T^2_b, S^3\big) \longrightarrow X.
\end{split}
\end{equation}

This splitting \eqref{IV} ensures that we can decompose the equation $\;\Phi = 0\;$ close to $\;\big(0, \alpha^b, \beta^b, 0, 0\big)\;$ into two equations which we solve successively in the following:

\begin{equation}
\begin{cases}
&\bigPi_Y \Phi = 0\\
&\bigPi_{\text{Moeb}_{f^b}T^2_b\oplus <\varphi^b, \tilde \varphi^b>} \Phi = 0.
\end{cases}
\end{equation}

In the language of Bifurcation Theory the first equation is called the Auxiliary Equation and the second the Bifurcation Equation. We deal with the Auxiliary Equation first. 
\subsection{Step (2)}  $\ $\\
For 

$$\Psi:= \bigPi_Y \circ \Phi \;: \;U(0) \times \R^2 \times \R \times \R \longrightarrow Y$$ 

we have that 

$$\del_V \Psi\big(0, \alpha^b, \beta^b, 0, 0\big)\mathlarger{|}_X = \bigPi_Y \circ \partial_V \Phi\big(0, \alpha^b, \beta^b, 0, 0\big)|_X.$$

By  \eqref{positivity2}  the map

$$\del_V \Psi\big(0,\alpha^b, \beta^b, 0, 0\big)\mathlarger{|}_X : X \longrightarrow Y$$

is an isomorphism and hence through the implicit function theorem there exist $\;\varepsilon_i >0, i = 1,2,3,\;$ an open neighborhood $\;U_{\text{Moeb}}(0) \subset \text{ Moeb}_{f^b}T^2_b\;$ and a smooth function
 
\begin{equation} \label{tildeVV}
\tilde V : U_{\text{Moeb}}(0) \times ]-\varepsilon_1+ \alpha^b, \alpha^b + \varepsilon_1[ \times]-\varepsilon_2+ \beta^b, \beta^b + \varepsilon_2[ \times ]-\varepsilon_3,  \varepsilon_3[ \times ]-\varepsilon_4,  \varepsilon_4[
\\ \longrightarrow U(0)\; \cap\; X \subset W^{4,2, \perp}\big(T^2_b, S^3\big) 
\end{equation}

such that $\;V\big(m, \alpha, \beta, t, s\big) = m + \tilde V\big(m, \alpha, \beta, t, s\big)\;$ satisfies

$$\Psi\big(V\big(m, \alpha, \beta, t, s\big), \alpha, \beta, t, s\big) = 0$$

for all 

$$\big(m, \alpha, \beta, t, s\big) \in U_{\text{Moeb}}(0) \times ]-\varepsilon_1+ \alpha^b, \alpha^b + \varepsilon_1[ \times]-\varepsilon_2+ \beta^b, \beta^b + \varepsilon_2[ \times ]-\varepsilon_3,  \varepsilon_3[ \times ]-\varepsilon_4,  \varepsilon_4[.$$

 Further, these are the only solutions to 
 
 $$\;\Psi\big(V, \alpha, \beta, t, s\big) = 0\quad\text{ with }\quad V \in W^{4,2, \perp}\big(T^2_b, S^3\big)\;$$
 
  close to $\;0\;$ in the $\;W^{4,2}$-topology and $\;\alpha \sim \alpha^b,\;$ $\beta \sim \beta^b\;$ and $\;t,\;s\sim 0.\;$ By the definition of $\;\Psi\;$ we have classified all solutions of 

\begin{equation}\label{aux}
\bigPi_Y\Big( \delta \mathcal W_{\alpha, \beta} \big(\exp_{ f^b} \big( V + t\varphi^b+ s \tilde \varphi^b\big)\big)\Big) = \bigPi_Y \Big(\Phi\big(V,\alpha, \beta, t, s\big)\Big) = 0 
\end{equation}

with $\;V \in W^{4,2 \perp}\big(T^2_b, S^3\big)\;$ close to $\;0,\;$ $\alpha \sim \alpha^b, \;$ $\beta\sim \beta^b\;$ and $\;t,\;s \sim 0.$

\subsection{Step (3)}$\ $\\
We now turn to the bifurcation equation

$$\bigPi_{\text{Moeb}_{f^b}T^2_b \oplus <\varphi^b, \tilde \varphi^b>} \Phi\big(V, \alpha, \beta, t, s\big) = 0,$$

which we split into two equations

\begin{equation}\label{bifeq}
\begin{cases}
&\bigPi_{\text{Moeb}_{f^b}T^2_b}\Phi\big(V, \alpha, \beta, t, s\big) = 0\\
&\bigPi_{<\varphi^b, \tilde \varphi^b>} \Phi\big(V, \alpha, \beta, t, s\big) = 0.
\end{cases}
\end{equation}

The first equation has already been dealt with in \cite{NdiayeSchätzle1} (see Proposition B.2 and Equation (B.7)). The M\"obius invariance of $\;\mathcal W\;$ and $\;\bigPi\;$ implies that every solution of \eqref{aux} already solves the equation

$$\bigPi_{\text{Moeb}_{f^b}T^2_b}\Phi\big(V, \alpha, \beta, t, s\big) = 0,$$

for $\;V \in W^{4,2,\perp}\big(T^2_b, S^3\big) \;\text{ close to }\; 0\; \text{ and } \;\alpha \sim \alpha^b,\;\; \beta \sim \beta^b,\;\; t, \;s\sim 0.\;$ Let

$$f^b_{(t,s)} = \exp_{ f^b}\big(  t \varphi^b + s \tilde \varphi^b  \big)  $$ 

the family of surfaces considered in Proposition \ref{1d} by which there exist families of M\"obius transformations $\;M(t,s)\;$ and $\;\sigma(t,s)\;$ such that 

$$M(t,s) \circ f^b_{(t,s)}\circ \sigma(t,s) = \exp_{ f^b}\big( c(t,s) \varphi^b \big).$$ 

 Because $\;M(s,t)\;$ act on $\;S^3\;$ as isometries, we obtain for any solution of the Auxiliary Equation in Step (1) that

$$f(V(m, \alpha, \beta, t, s), t,s) = M(t,s) \circ \big( \exp_{ f^b}\big( V(m, \alpha, \beta, t, s) +  t \varphi^b + s \tilde \varphi^b \big)\circ \sigma(t,s) $$

 is given by 
 
$$f(V(m, \alpha, \beta, t, s), t,s) =  \exp_{ f^b}\big( \bar V(m, \alpha, \beta, t, s) + c(t,s) \varphi^b \big), $$
 
 with 
 
 $$\;\bar V(m, \alpha, \beta, t, s) = M(t,s) \circ V(m, \alpha, \beta, t, s)\circ \sigma(t,s) \;\perp\; <\varphi^b, \tilde \varphi^b>.$$
 
Therefore we can restrict ourselves without loss of generality to the equation

$$ \bigPi_{<\varphi^b>} \Phi\big(V, \alpha, \beta, t,0\big) = 0.$$

Note that this equation and the maps involved remain well-defined for $\;b \longrightarrow 1.\;$ Now the situation is very similar to the situation of bifurcation from simple eigenvalues. To abbreviate the notations let 

$$\Phi\big(V, \alpha, \beta, t\big) :=\Phi\big(V, \alpha, \beta, t, 0\big) \quad \text{ and }\quad V\big(m, \alpha, \beta, t\big):=V\big(m, \alpha, \beta, t, 0\big).$$

 We have derived that there exist a smooth function $V$ satisfying 
 
$$\bigPi_{Y \oplus \text{ Moeb}_{f^b}T^2_b}\;\Phi\big(V\big(m, \alpha, \beta, t\big), \alpha, \beta, t\big) = 0$$

for all $\;\big(m, \alpha, \beta, t\big) \in U_{\text{Moeb}}(0) \times ]-\varepsilon_1+ \alpha^b, \alpha^b + \varepsilon_1[ \times]-\varepsilon_2+ \beta^b, \beta^b + \varepsilon_2[ \times ]-\varepsilon_3,  \varepsilon_3[.\;$ It remains to solve

 $$\bigPi_{<\varphi^b>}\Phi\big(V\big(m, \alpha, \beta, t\big), \alpha, \beta, t\big) = 0,$$
 
 or equivalently 
 
  $$\Phi\big(V\big(m, \alpha, \beta, t\big), \alpha, \beta, t\big)\cdot \varphi^b = 0,$$
  
for $\big(m, \alpha, \beta, t\big) \in U_{\text{Moeb}}(0) \times ]-\varepsilon_1+ \alpha^b, \alpha^b + \varepsilon_1[ \times]-\varepsilon_2+ \beta^b, \beta^b + \varepsilon_2[ \times ]-\varepsilon_3,  \varepsilon_3[.$\\

For the smooth family of surfaces

$$f_t^b =  \exp_{ f^b}\big( V(m, \alpha, \beta, t) + t \varphi^b \big)$$

we observe

\begin{equation*}
\begin{split}
\Phi\big(0, \alpha^b, \beta^b, 0\big) \cdot \varphi^b &= \delta \mathcal W_{\alpha^b, \beta^b}\big(f^b \big)\big(\varphi^b\big) = 0\\
\del_t|_{t = 0} \Phi\big(0, \alpha^b, \beta^b, 0\big) \cdot \varphi^b & =  \delta^2 \mathcal W_{\alpha^b, \beta^b}\big(f^b \big) \big(\dot f^b, \;\varphi^b\big)=0\\
\del^2_{tt}|_{t = 0} \Phi\big(0, \alpha^b, \beta^b, 0\big) \cdot \varphi^b & =  \delta^3 \mathcal W_{\alpha^b, \beta^b}\big(f^b \big)\big(\dot f^b, \;\dot f^b,  \;\varphi^b\big) \\
\del^3_{ttt}|_{t = 0} \Phi\big(0, \alpha^b, \beta^b, 0\big)\cdot \varphi^b &= \delta^4 \mathcal W_{\alpha^b, \beta^b}\big(f^b \big) \big(\dot f^b,  \;\dot f^b,  \;\dot f^b,  \;\varphi^b\big) + \delta^3 \mathcal W_{\alpha^b, \; \beta^b}\big(f^b \big)\big(\ddot f^b,  \;\dot f^b,  \;\varphi^b\big),
\end{split}
\end{equation*}

where $\dot {()}$ denote the derivative with respect to $\;t\;$ at $\;t=0$ and $\;\ddot f^b := \nabla_{\dot f^b} \dot f^b,\;$ where $\nabla$ is te Levi-Civita connection of $S^3.$\\

\begin{Lem}
With the notations above  we have for $\;b\sim 1$

$$\del^2_{tt}|_{t = 0} \Phi\big(0, \alpha^b, \beta^b, 0\big) \cdot \varphi^b =  0 \ \text{and} \ \ \del^3_{ttt}|_{t = 0} \Phi\big(0, \alpha^b, \beta^b, 0\big)\cdot \varphi^b < 0.$$

\end{Lem}

\begin{proof}
The aim is to use Proposition \ref{n-thderivative} for the conclusion. For this it is necessary to identify $\;\dot f\;$ with $\;\varphi^b\;$ appropriately.  
 For $\;b \sim 1\;$  consider again
 
$$f^b_t= \exp_{ f^b}\big( V(m, \alpha, \beta, t) + t \varphi^b \big).$$

Then we have $\;\dot f^b =\varphi^b + \dot V(m, \alpha^b, \beta^b, 0).\;$ Since $\;V \in X\;$ we have that also $\;\dot V(m, \alpha^b, \beta^b, 0) \in X.\;$ Further, $\;f_b(t)\;$ solves the constrained Willmore equation on $\;X\;$ from which we obtain

$$\delta^2 \mathcal W_{\alpha^b, \beta^b}\big(f^b\big) \big( \dot V , \;V_0\big) = 0 \text{ for all } V_0 \in X.$$

From this we have $\;\dot V \in X^\perp\;$ and therefore $\;\dot V \in X \cap X^\perp\;$ and we obtain $\;\dot V=0\;$ and $\;\dot f^b = \varphi^b\;$ showing the assertion. \\

For the second derivative $\;\ddot f^b = \ddot V(m, \alpha, \beta, t)\;$ consider the candidates constructed in \cite{HelNdi2}. They are $\;W^{4,2}\;$ close to the homogenous torus and thus there exist maps $\;t(a,b)\;$ and $\;V\big(m(a,b), \alpha(a,b), \beta(a,b), t(a,b)\big)\;$ such that the candidate surfaces have the following representation:

$$f_{(a,b)} =  \exp_{ f^b}\Big( V\big(m(a,b), \alpha(a,b), \beta(a,b), t(a,b) \big)+ t(a,b) \varphi^b\Big)$$

Since $\;\big(\del_{\tilde a} f_{(a,b)}|_{\tilde a=0}\big)^\perp = \varphi^b\;$, we have that 

$$\del_{\tilde a} t(a,b)|_{\tilde a=0} = 1 $$

 and 
 
 $$ \del_{\tilde a} V|_{\tilde a= 0} = \Big(\del_{\tilde a} \alpha(a,b) \del_{\alpha} V + \del_{\tilde a} \beta(a,b) \del_{\beta} V + \del_{\tilde a} t(a,b) \del_{t} V\Big)|_{\tilde a= 0} = 0.$$

For $\;b \longrightarrow 1\;$ we obtain with similar arguments as for $\;\dot V\;$ that 

$$\delta^2 \mathcal W_{\alpha^1, \beta^1}\big(f^1\big) \big(\del_{\alpha}|_{\alpha= \alpha^b} V (\alpha^1, \beta^1, 0),\; \cdot\big)  =  \delta \bigPi^1\big(f^1\big)\big(\cdot \big) = 0$$

from which we obtain that $\;\del_\alpha V|_{\alpha = \alpha^b} = 0.\;$ Further, 

$$\del^2_{\tilde a \tilde a} \beta(a,b)|_{\tilde a= 0} = \del_a \beta(a,b)|_{a= 0} = \del_b \alpha^b|_{b = 1} = 0. $$

The last equality is due to the fact that  $\;\alpha^{b} = \alpha^{\tfrac{1}{b}}.\;$
Moreover, we have already computed that $\;\del_{\tilde a} \alpha(a, b) = \del_{\tilde a} \beta(a,b) = 0.\;$
For the second derivative $\;\del^2_{\tilde a \tilde a}f_{(a,b)} := \nabla_{\del_{\tilde a}f_{(a,b)}}\del_{\tilde a}f_{(a,b)}\;$ we thus obtain

\begin{equation*}
\begin{split}
\lim_{b \rightarrow1}\del^2_{\tilde a \tilde a} f_{(a,b)}|_{\tilde a = 0}&=\lim_{b \rightarrow1}\del_{\tilde a}\varphi(a)|_{a=0} \\
 &= \lim_{b \rightarrow1}\del^2_{\tilde a\tilde a}  t(a,b)|_{\tilde a = 0} \varphi^b + \lim_{b \rightarrow1}\big(\del_{\tilde a} t(a,b)\big)^2|_{\tilde a=0} \ddot V_b \big(m, \alpha^b, \beta^b, 0\big)\\
&= \lim_{b \rightarrow1}\del^2_{\tilde a\tilde a}  t(a,b)|_{\tilde a = 0} \varphi^b + \lim_{b \rightarrow1}\ddot V \big(m, \alpha^b, \beta^b, 0\big)\\
& = \del^2_{\tilde a\tilde a}  t(a,1)|_{\tilde a = 0} \varphi^1 + \ddot f^1.
\end{split}
\end{equation*}

By the first assertion of Proposition \ref{n-thderivative} we thus obtain

$$\delta^3 \mathcal W_{\alpha^1, \beta^1} \big (\del_{\tilde a}\varphi(a)|_{a=0, b=1}, \varphi^1, \varphi^1 \big) = \delta^3 \mathcal W_{\alpha^1, \beta^1} \big(\ddot f^1, \varphi^1, \varphi^1\big)$$

and therefore

$$\lim_{b \rightarrow 1}\partial^3_{ttt} \Phi\big(0, \alpha^b, \beta^b, t \big) \cdot \varphi^b < 0.$$

By continuity we get that this remains true for $\;b \sim 1\;$ close enough.

\end{proof} 

Now we can use classical arguments in bifurcation theory (bifurcation from simple eigenvalues) to obtain a unique function $\;t(m, \alpha, \beta)\sim 0\;$  satisfying

$$ \Phi\Big(V\big(m, \alpha, \beta, t(m, \alpha, \beta)\big), \alpha, \beta, t(m, \alpha, \beta)\Big) \cdot \varphi^b = 0.$$

Moreover, all solutions to 
 
 $$\Phi\big(V(m, \alpha, \beta, t), \alpha, \beta, t \big) \cdot \varphi^b = 0$$
 
 for $\;(m, \alpha, \beta, t) \in U_{\text{Moeb}}(0) \times ]-\varepsilon_1+ \alpha^b, \alpha^b + \varepsilon_1[ \times]-\varepsilon_2+ \beta^b, \beta^b + \varepsilon_2[ \times ]-\varepsilon_3,  \varepsilon_3[\;$ are of this form for sufficiently small $\;\varepsilon_i\;$ and $\;U_{\text{Moeb}}(0).\;$ In other words, 
 
$$f_{\alpha, \beta}^{m}:= \exp_{ f^b}\Big( V\big(m, \alpha, \beta, t(m, \alpha, \beta)\big) + t(m, \alpha, \beta) \varphi^b\Big)$$

are the only solutions to 

\begin{equation}\label{*}
\begin{split}
&\delta \mathcal W_{\alpha, \beta} \big(f \big) = 0 \quad \text{ with}\\
&f = \exp_{ f^b}\Big( W^{4,2 \perp}\big(T^2_b, S^3\big) \;\cap\; <\tilde \varphi^b>^\perp \Big)
\end{split}
\end{equation}

which are $\;W^{4,2}$-close to $\;f^b\;$ $\alpha \sim \alpha^b,\;$ and $\;\beta \sim \beta^b.\;$ For fixed $\;(\alpha, \beta) \sim (\alpha^b, \beta^b)\;$ we thus obtain a manifold worth of solutions of dimension $\;\dim \big(\text{Moeb}_{f^b}T^2_b\big) + 1.$\\

Since $\;\mathcal W\;$ and $\;\bigPi\;$ is M\"obius and parametrization invariant, we get for any M\"obius transformation $\;M\;$ with

$$M \circ \exp_{ f^b}\Big(  V\big(m ,\alpha, \beta, t(m,\alpha, \beta)\big) + t(m ,\alpha, \beta)\varphi^b\Big)\subset S^3$$

and every

$$\sigma \in \text{Diff} = \text{Diff}_{T^2_b} := \big\{\;\psi : T^2_b \longrightarrow T^2_b \; \; | \; \; \psi  \;\text{ is a smooth diffeomorphism }\big\}$$

that the following equation holds

$$\delta \mathcal W_{\alpha, \beta}\left (M \circ \exp_{ f^b}\left( V\big(m ,\alpha, \beta, t(m,\alpha, \beta)\big) + t(m ,\alpha, \beta)\varphi^b\right) \circ \sigma \right) = 0.$$

The M\"obius group $\;$Moeb$(3)\;$ of $\;S^3\;$ is a finite dimensional Lie group and for an appropriate neighborhood $\;U($Id$)\; \subset \text{Moeb}(3)\;$ and $\;(\alpha, \beta) \in\; ]-\varepsilon_1 + \alpha^b, \alpha^b+ \varepsilon_1[ \times ]-\varepsilon_2 + \beta^b, \beta^b+ \varepsilon_2[\;$ we have

$$M \circ \exp_{ f^b}\left( V\big(m ,\alpha, \beta, t(m,\alpha, \beta)\big) + t(m ,\alpha, \beta)\varphi^b \right)$$

 is $\;\mathcal C^1$-close to $\;f^b\;$ and hence we can write 

$$M \circ\exp_{ f^b}\Big( V\big(m ,\alpha, \beta, t(m,\alpha, \beta)\big) + t(m ,\alpha, \beta)\varphi^b\Big) \circ \sigma =\exp_{f^b}( W)$$

for an appropriate $\;W \in W^{4,2, \perp}(T^2_b,S^3)\;$ and $\;\sigma \in \text{Diff}.\;$ More precisely, for the nearest point projection 

$$\bigPi_{f^b}: U_{\delta} := \big\{\; x \in S^3 \; | \ \text{dist}\big(x, f^b(T^2_b)\big) < \delta \;\big\} \longrightarrow f^b\big(T^2_b\big)$$

for an appropriate small positive $\;\delta,\;$ we have

$$\sigma := \sigma (M, \alpha, \beta) = \big(f^b\big)^{-1}\circ \bigPi_{f^b}\circ  M \circ f^b$$

and 

$$W = W(M, \alpha, \beta) = M \circ\Big(V\big(m ,\alpha, \beta, t(m,\alpha, \beta)\big) + t(m ,\alpha, \beta)\varphi^b\Big) \circ \sigma.$$

Now since $\;f^{m}_{\alpha, \beta}\;$ are the only solutions to \eqref{*} in $\; \exp_{f^b} \big(W^{4,2,\perp} (T^2_b, S^3)\;\cap <\tilde \varphi>^\perp\big)\;$ which are $\;W^{4,2}$-close to $\;f^b\;$  we get

\begin{equation*}
\begin{split}
W(M, \alpha, \beta) &= V\big(m, \alpha, \beta, t(m, \alpha, \beta)\big) + t (m , \alpha, \beta)\varphi^b \\
&= m + \tilde V\big(m, \alpha, \beta, t(m, \alpha, \beta)\big) + t(m , \alpha, \beta)\varphi^b
\end{split}
\end{equation*}

for some $\;m \in U_{\text{Moeb}}(0) \subset \text{ Moeb}_{f^b}(T^2_b).\;$ More precisely we have

$$m:=  m(M, \alpha, \beta) = \bigPi_{\text{Moeb}_{f^b}T^2_b} W\big(M, \alpha, \beta\big).$$

Since $\;\tilde V\;$ is a smooth map into $\;W^{4,2}\big(T^2_b, S^3\big) \subset \mathcal C^2\big(T^2_b, S^3\big)\;$ we obtain that the maps 

$$\big(M, \alpha, \beta\big) \longmapsto \sigma, \;W, \;m$$

are continuously differentiable into $\;C^1\big(T^2_b, S^3\big).\;$ Hence we obtain for $\;\chi \in T_{Id}$ Moeb$(3)$   

$$\del_{M} W\big(M, \alpha^b, \beta^b\big)\cdot \chi\mathlarger{|}_{M= Id} = \big(\chi \circ f^b\big) \; +  \;df^b  \big(\del_M\sigma(M, \alpha^b, \beta^b)\mathlarger{|}_{M= Id}\cdot \chi\big) = P_{\chi \circ f^b}\big(\chi \circ f^b\big) \in\text{ Moeb}_{f^b}T^2_b$$

and thus

$$\del_{M}m\big(Id, \alpha^b, \beta^b\big) \cdot \chi = \bigPi_{\text{Moeb}_{f^b}T^2_b}\Big( P_{\chi \circ f^b}\big(\chi \circ f^b\big)\Big) = P_{\chi \circ f^b}\big(\chi \circ f^b\big).$$

By definition of $\;$Moeb$_{f^b}T^2_b\;$ we thus obtain that

 $$\del_M m\big(Id, \alpha^b, \beta^b\big) :  T_{Id}\text{Moeb}(3) \longrightarrow \text{Moeb}_{f^b} T^2_b$$
 
 is surjective and hence by implicit function theorem and $\;m\big(Id, \alpha^b, \beta^b\big) = 0\;$ we have 

$$\tilde U_{\text{Moeb}}(0) \subset m\Big(U(0) \times \big\{(\alpha, \beta)\big\}\Big) $$

for some  open neighborhood $\;\tilde U_{\text{Moeb}}(0)\;$ of $\;0\;$ in $\;$ Moeb$_{f^b} T^2_b\;$ independent of $\;(\alpha, \beta).\;$ Therefore we have that

\begin{equation}\label{xx}
M \circ \exp_{ f^b}\Big(V\big(m ,\alpha, \beta, t(m,\alpha, \beta)\big) + t(m ,\alpha, \beta)\varphi^b\Big) \circ \sigma
\end{equation} 

are the only solutions to \eqref{*} which are $\;W^{4,2}$-close to $\;f^b.$

\subsection{Step (4)}$\ $\\The aim is to identify the Teichm\"uller class of the solutions of \eqref{*} given by \eqref{xx} for fixed $\;b \sim 1\;$ and $\;b \neq1.\;$ In particular, we show that the solutions of \eqref{*} induces a local diffeomorphism between the space of Lagrange multipliers (around $\;(\alpha^b, \beta^b)$) to the Teichm\"uller space of tori around the class of the Clifford torus $\;(0,1) \in \H^2.\;$ 
Clearly, by setting 

\begin{equation}\label{VbV}
V_b = V\big(\alpha, \beta, t(0,\alpha, \beta)\big) + t(0, \alpha, \beta)\varphi^b
\end{equation}

we have 

$$\bigPi\left(M \circ \exp_{ f^b}\big( V(\alpha, \beta)\big) \circ \sigma\right)^*g_{S^3} = \bigPi\left(\exp_{ f^b}\left(V(\alpha, \beta)\right)\right)^*g_{S^3}.$$

Thus for all solutions of \eqref{*} we have that

\begin{equation*}
\begin{split}
\begin{pmatrix}c (\alpha, \beta) \\ d(\alpha, \beta)\end{pmatrix} &= \begin{pmatrix} \bigPi^1\exp_{ f^b}\left(V(\alpha, \beta)\right)^*g_{S^3} \\ \bigPi^2\exp_{ f^b}\left(V(\alpha, \beta)\right)^*g_{S^3}\end{pmatrix}\\
&= \begin{pmatrix} \bigPi^1\exp_{ f^b}\left(V(\alpha, \beta)\right)^*g_{S^3} \\ \bigPi^2\exp_{ f^b}\left(V(\alpha, \beta)\right)^*g_{S^3} \end{pmatrix}
\end{split}
\end{equation*}

independently of $\;m \in \tilde U_{\text{Moeb}}(0).\;$ We first solve for $\;\bigPi^2,\;$ i.e., want to solve the equation 

$$d(\alpha, \beta) =  \tilde b \quad \text{ for } \quad \tilde b \sim b.$$

By definition we have

$$d(\alpha^b, \beta^b) = b$$

and further

$$\del_{\beta}\mathlarger{|}_{\beta= \beta^b} d(\alpha^b, \beta^b) = \delta \bigPi^2\big(f^b\big)\big(\del_{\beta}\mathlarger{|}_{\beta = \beta^b} V_b(\alpha^b, \beta)\big).$$

Then from 

$$\Phi\big(V(\alpha, \beta), \alpha, \beta, t(0, \alpha, \beta)\big) = 0$$

with $\;V(\alpha, \beta) := \tilde V\big(0, \alpha, \beta, t(0, \alpha, \beta)\big)\;$
and 

$$\del_V \Phi\big(0, \alpha^b, \beta^b, 0\big) \cdot Z = \delta^2 \mathcal W_{\alpha^b, \beta^b}\big(f^b\big)\big(Z,\;.\big)$$

we derive that

$$\del_V \Phi\big(0, \alpha^b, \beta^b, 0\big) \cdot \del_\beta\mathlarger{|}_{\beta= \beta^b} V\big(\alpha^b, \beta^b\big) + \del_\beta\mathlarger{|}_{\beta= \beta^b} \Phi\big(0, \alpha^b, \beta^b, 0\big) + \del_t \Phi\big(0, \alpha^b, \beta^b, 0\big) \del_\beta\mathlarger{|}_{\beta= \beta^b}t\big(0, \alpha^b, \beta^b\big) = 0.$$

Thus we get

\begin{equation}
\begin{split}\delta^2\mathcal W_{\alpha^b, \beta^b}\big(f^b\big)\big(\del_\beta|_{\beta  = \beta^b} V(\alpha^b, \beta^b), \;.\big) &- \delta \bigPi^2\big(f^b\big) + \delta^2\mathcal W_{\alpha^b, \beta^b}\big(f^b\big)\big( \del_\beta|_{\beta  = \beta^b} t(0, \alpha^b, \beta^b) \varphi^b, \;.\big) =0 \\
\Leftrightarrow \quad  \quad \delta^2\mathcal W_{\alpha^b, \beta^b}\big(f^b\big)\big(\del_\beta|_{\beta  = \beta^b} V(\alpha^b, \beta^b),\; .\big) &= \delta \bigPi^2\big(f^b\big).
\end{split}
\end{equation}

On the other hand, there exist a $\;V^0_b \in  C^\infty \big(T^2_b, S^3\big)\;$ such that $\;\delta \bigPi^2_{f^b} \big(V^0_b\big) \neq 0\;$ by Proposition 3.2. of \cite{NdiayeSchätzle1}. This implies

$$\delta^2\mathcal W_{\alpha^b, \beta^b}\big(f^b\big)\big(\del_\beta|_{\beta  = \beta^b} V(\alpha^b, \beta^b), V^0_b\big) \neq 0$$

 therefore $\;\del_\beta|_{\beta = \beta^b} V(\alpha^b, \beta^b) \notin \text{Moeb}_{f^b}T^2_b\; \oplus <\varphi^b, \tilde \varphi^b>\;$ and 

$$\delta^2\mathcal W_{\alpha^b, \beta^b}\big(f^b\big)\big(\del_\beta|_{\beta  = \beta^b} V(\alpha^b, \beta^b), \;\del_\beta|_{\beta  = \beta^b} V(\alpha^b, \beta^b) \big)>0$$

by \eqref{positivity2} or the computations in Section \ref{stability}. Hence using the implicit function theorem we have for $\;\alpha \sim \alpha^b\;$ and $\;\tilde b \sim b\;$ a unique $\;\beta(\alpha, d) \sim \beta^b$\; such that

$$d \big(\alpha, \beta(\alpha, \tilde b) \big) = \tilde b \quad \text{and} \quad \beta(\alpha^b, b)  = \beta^b$$

and the map $\;(\alpha, \tilde b) \longrightarrow \beta(\alpha, \tilde b) \;$ is smooth. In particular, If $\;\bigPi^2(f)= b\;$ and $\;\alpha(f)= \alpha^b\;$
 we obtain $\;\beta(f) = \beta^b.\;$ It remains to determine $\;\bigPi^1\;$ of the solutions of \eqref{*} given in \eqref{xx}. The equation we aim to solve is 

$$c\big(\alpha(\tilde b, \beta), \;\beta\big) = a \quad \text{with} \quad a\sim 0.$$

We have 

\begin{equation}
\begin{split}
&c\big(\alpha^b, \;\beta(\alpha^b, b)\big) =0\\
\del_{\alpha}\mathlarger{|}_{\alpha = \alpha^b}\Big[&c\big(\alpha, \;\beta(\alpha, b)\big)\Big] = \delta \bigPi^1\big(f^b\big) \cdot \Big(\del_{\alpha}\mathlarger{|}_{\alpha= \alpha^b} \big[V_b\big(\alpha,\; \beta(\alpha, b)\big)\big]\Big) = 0\\
\del^2_{\alpha}\mathlarger{|}_{\alpha= \alpha^b}\Big[&c\big(\alpha,\; \beta(\alpha, b)\big)\Big] = \delta^2 \bigPi^1\big(f^b\big) \Big(\del_{\alpha}\mathlarger{|}_{\alpha= \alpha^b}\big[V_b\big(\alpha,\; \beta(\alpha, b)\big)\big], \del_{\alpha}\mathlarger{|}_{\alpha= \alpha^b} \big[V_b\big(\alpha,\; \beta(\alpha, b)\big)\big] \Big).
\end{split}
\end{equation}

Now, using the fact that 

$$\Phi\Big(V\big(\alpha, \;\beta(\alpha, b)\big),\; \alpha,\; \beta(\alpha, b), \; t\big(0, \alpha, \beta(\alpha,  b)\big)\Big)=0$$

we get

\begin{equation}
\begin{split}
\del_{\alpha}\mathlarger{|}_{\alpha= \alpha^b}\Phi\big(0, \alpha^b, \beta^b, 0\big) + \del_V\Phi\big(0, \alpha^b, \beta^b, 0\big)\cdot \Big(\del_{\alpha}\mathlarger{|}_{\alpha= \alpha^b} \big[V_b\big(\alpha, \beta(\alpha,  b)\big)\big]\Big)\\
 + \del_t  \Phi\big(0, \alpha^b, \beta^b, 0\big) \cdot \Big(\del_{\alpha}\mathlarger{|}_{\alpha= \alpha^b}\big[t\big(0,\alpha, \beta(\alpha,  b)\big)\big] \varphi^b\Big)  = 0.
\end{split}
\end{equation}

Thus we obtain $\;\Big($using $V_b \big(\alpha, \beta(\alpha, b)\big) = V\big(\alpha, \beta(\alpha,  b)\big) + t\big(0, \alpha, \beta(\alpha,  b)\big)\varphi^b\;$, see \eqref{VbV},  and  the fact that $\;V\big(\alpha, \beta(\alpha,  b)\big) \perp \;<\varphi^b, \tilde \varphi^b>\;$ by definition \eqref{tildeVV}$\Big)\;$

$$-2\del_{\alpha}\mathlarger{|}_{\alpha= \alpha^b}\delta\bigPi^1\big(f^b\big) + \delta^2 \mathcal W_{\alpha^b, \beta^b}\big(f^b\big) \Big(\del_{\alpha}\mathlarger{\mathlarger{|}}_{\alpha= \alpha^b}\big[V_b\big(\alpha, \beta(\alpha, b)\big)\big],\; .\Big) = 0$$

 and therefore we have
 
 \begin{equation}\label{xxx}
  \delta^2 \mathcal W_{\alpha^b, \beta^b}\big(f^b\big) \Big(\del_{\alpha}\mathlarger{\mathlarger{|}}_{\alpha= \alpha^b}\big[V_b\big(\alpha, \beta(\alpha, b)\big)\big], .\Big) = 0,
  \end{equation}
  
  which means that
  
 $$ \del_{\alpha}\mathlarger{\mathlarger{|}}_{\alpha= \alpha^b} \Big[V_b\big(\alpha, \beta(\alpha, b)\big)\Big] \in \text{ Moeb}_{f^b}T^2 \oplus <\varphi^b, \tilde \varphi^b>,$$
 
 i.e., by  \eqref{VbV} and \eqref{tildeVV}
 
 $$ \del_{\alpha}\mathlarger{\mathlarger{|}}_{\alpha= \alpha^b}\Big[V\big(\alpha, \beta(\alpha, b)\big)\Big] \in \text{ Moeb}_{f^b}T^2.$$
 
 Therefore, we get by \eqref{xxx} 
  
  \begin{equation}
  \begin{split}
  \alpha^b\delta^2 \bigPi^1\big(f^b\big) &\Big( \del_{\alpha}\mathlarger{\mathlarger{|}}_{\alpha= \alpha^b} \big[V_b\big(\alpha, \beta(\alpha, b)\big)\big],  \del_{\alpha}\mathlarger{\mathlarger{|}}_{\alpha= \alpha^b} \big[V_b\big(\alpha, \beta(\alpha, b)\big)\big]\Big)\\
   &= \delta^2 \mathcal W_{\beta^b}\big(f^b\big) \big(\varphi^b, \varphi^b\big) \Big(\del_{\alpha}\mathlarger{\mathlarger{|}}_{\alpha= \alpha^b} \big[t\big(0,\alpha, \beta(\alpha, b)\big)\big]\Big)^2.
   \end{split}
   \end{equation}
   
Using $\;\del_{\alpha}\mathlarger{\mathlarger{|}}_{\alpha= \alpha^b} \Big[t\big(0,\alpha, \beta(\alpha, b)\big)\Big] \neq 0\;$ this implies that

$$\del^2_{\alpha}\mathlarger{\mathlarger{|}}_{\alpha= \alpha^b} \Big[c\big(\alpha, \beta(\alpha, b)\big)\Big] > 0.$$

Hence as above, using classical arguments in bifurcation theory via monotonicity we have that there exist a unique branch of solutions 
$\;\alpha(a, \tilde b)\;$ such that 

$$c\Big(\alpha(a, \tilde b), \;\beta\big(\alpha (a,\tilde b), \tilde b\big)\Big) = a$$

for $\;a \sim 0^+\;$ and $\;\tilde b \sim b\;$ with $\;\alpha(0, b) = \alpha^b, \;$ and $\;\alpha(a,b) \leq \alpha^b.\;$
Altogether we obtain for  $\;b \sim 1\;$ but $\; b\neq1\;$ fixed, a family of smooth solutions to (up to invariance)

$$\delta \mathcal W_{\alpha, \beta}\big(f\big) = 0, \quad \alpha \sim \alpha^b, \quad \alpha \leq \alpha^b\quad \text{ and } \quad \beta \sim \beta^b$$ 

parametrized by their conformal type $\;a \sim 0^+,\;$ such that the only solution with 
$\;\alpha= \alpha^b\;$ and  $\;\bigPi^2(f)= b\;$ is the homogenous torus of conformal class $(0,b).$

\end{proof}

\section{Reduction of the global problem to a local one}
We use  penalization and relaxation techniques of Calculus of Variations to establish Theorem \ref{minimizers} providing the existence of appropriate global minimizers in an open neighborhood of each rectangular class close to the square class. By appropriate global minimizer we mean those reducing our clearly global problem to a local problem, i.e., which are close to the Clifford torus in $\;W^{4,2}\;$ with prescribed behavior of its Lagrange multipliers. Then Theorem \ref{cwm} shows that these abstract minimizers coincides with the candidate surfaces. \\

\begin{The}\label{minimizers}
For every $\;b \sim 1\;$ there exists an $\;a^b \;$ small with the property that 
 for all $\;a \in [0, a^b]\;$ the infimum of Willmore energy 
 
$$\Min_{(a,b)} = \inf \;\Big\{\; \mathcal W_{\alpha^b}\big(f\big)\; \mathlarger{\mathlarger{|}}\   f: T^2_b \longrightarrow S^3 \text{ smooth immersion } \mathlarger{|} \  0 \leq \bigPi^1\big(f\big)  \leq a \text{ and } \bigPi^2\big(f\big) = b\;\Big\}$$

is attained by a smooth immersion $\;f^{(a,b)}: T^2_b \longrightarrow S^3\;$ of conformal type $\;(a,b)\;$ and verifying 

$$\delta \mathcal W_{\alpha^{(a,b)}, \;\beta^{(a,b)}}\big(f^{(a,b)}\big) = 0$$ 

with  $\;\alpha^{(a,b)}\leq \alpha^b$ and $\alpha^{(a,b)} \longrightarrow \alpha^b\;$ almost everywhere as $\;a \longrightarrow 0\;$
 and $\;\beta^{(a,b)} \longrightarrow  \beta^b,\;$ as $\; a \longrightarrow 0\;$ where $\;(\alpha^b, \beta^b)\in \R^2\;$ as defined in Theorem \ref{cwm}.
\end{The}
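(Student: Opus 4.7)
The plan follows the outline sketched in the paragraphs immediately preceding the statement. First, invoke the existence and regularity theory of Kuwert--Sch\"atzle \cite{KuwertSch�tzle2} and Sch\"atzle \cite{Sch�tzle} to produce a smooth minimizer $f^{(a,b)}$ of $\mathcal W_{\alpha^b}$ in the prescribed class, for each $a \in [0, a^b]$ with $a^b$ sufficiently small. The strict sub-$8\pi$ energy bound needed for that regularity theory follows from the existence of $2$-lobed Delaunay tori in the rectangular class $(0,b)$ combined with smallness of $a^b$ and continuity of the minimal Willmore energy in the conformal class. The minimizer satisfies an Euler--Lagrange equation with Lagrange multipliers $\alpha^{(a,b)}, \beta^{(a,b)}$, and the Kuhn--Tucker conditions for the inequality constraint $\bigPi^1 \leq a$ together with activity at the upper boundary yield the sign condition $\alpha^{(a,b)} \leq \alpha^b$.

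Next I would show that the upper constraint is saturated, i.e., $\bigPi^1(f^{(a,b)}) = a$. If instead $\bigPi^1(f^{(a,b)}) < a$ strictly, then $f^{(a,b)}$ is stationary for $\mathcal W_{\alpha^b}$ under the equality constraint $\bigPi^2 = b$ alone, forcing $\alpha^{(a,b)} = \alpha^b$. The classification Theorem \ref{cwm} then forces $f^{(a,b)} = f^b$, contradicting the strict inequality \eqref{fpmca} which exhibits candidate surfaces $f_{(a,b)}$ with lower $\mathcal W_{\alpha^b}$-energy than $f^b$. Saturation implies that $a \mapsto \Min_{(a,b)}$ is monotonically non-increasing (since increasing $a$ enlarges the admissible class), hence $\omega(a,b) := \Min_{(a,b)} + \alpha^b a$ is almost everywhere differentiable in $a$. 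To identify $\partial_a \omega(a,b) = \alpha^{(a,b)}$ at points of differentiability, I would construct a smooth comparison family $\bar f^{(a,b)}$ with $\bar f^{(a_0,b)} = f^{(a_0,b)}$, conformal type $(a,b)$, and Willmore energy matching $\omega(\cdot,b)$ at $a_0$ up to second order; differentiating $\mathcal W(\bar f^{(a,b)})$ at $a_0$ using the Euler--Lagrange equation together with $\delta \bigPi^1(\partial_a \bar f^{(a_0,b)}) = 1$ and $\delta \bigPi^2(\partial_a \bar f^{(a_0,b)}) = 0$ gives the identity.

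Write $\alpha_{sup}$ and $\alpha_{inf}$ for the almost-everywhere $\limsup$ and $\liminf$ of $\alpha^{(a,b)}$ as $a \to 0^+$. The sign condition gives $\alpha_{sup} \leq \alpha^b$ immediately. Comparing $\omega(a,b) \leq \mathcal W(f_{(a,b)})$ against the candidate family and using the monotone identification $\partial_a \mathcal W(f_{(a,b)}) = \alpha_{(a,b)} \nearrow \alpha^b$ from Remark \ref{ab} yields $\alpha_{sup} \geq \alpha^b$, hence equality. For the lower bound, the comparison family $\bar f^{(a,b)}$ produces $\alpha_{inf} \geq 0$. Given any subsequence $a_n \to 0$ along which $\alpha^{(a_n,b)} \to \alpha_\infty \in [0, \alpha^b)$, the compactness arguments of \cite{NdiayeSch�tzle1} (available because of the $8\pi$-bound) extract a further subsequence along which $f^{(a_n,b)} \to f^b$ in $W^{4,2}$ and $\beta^{(a_n,b)} \to \beta^b$; Lemma \ref{alpha<alpha^b} then forces $f^{(a_n,b)} = f^b$ for large $n$, contradicting $\bigPi^1(f^{(a_n,b)}) = a_n > 0$. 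Hence $\alpha_{inf} = \alpha^b$, so $\alpha^{(a,b)} \to \alpha^b$ almost everywhere, and the convergences $\beta^{(a,b)} \to \beta^b$ and $f^{(a,b)} \to f^b$ in $W^{4,2}$ follow from the same compactness arguments.

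The main technical obstacle I expect is the construction of the comparison family $\bar f^{(a,b)}$ with genuine second-order matching of $\omega(\cdot,b)$ at $a_0$, since this is the mechanism that converts almost-everywhere differentiability of $\omega$ into the pointwise identification $\partial_a \omega = \alpha^{(a,b)}$. The almost-everywhere restriction in the convergence of $\alpha^{(a,b)}$ appears intrinsic to this monotonicity-based strategy: monotonicity of $\Min_{(a,b)}$ alone does not yield continuity of the multiplier in $a$, and this is the origin of the zero set $A$ mentioned in the surrounding discussion. Lifting the a.e.\ identification to $f^{(a,b)} = f_{(a,b)}$ for all $a$ is then handled separately, via continuity of $\omega(\cdot,b)$ from \cite{KuwertSch�tzle} and real analyticity of the candidate family, as outlined in the strategy section.
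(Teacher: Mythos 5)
Your strategy matches the paper's quite closely: existence via the Kuwert--Sch\"atzle/Sch\"atzle theory, saturation of the $\bigPi^1$-constraint via the classification theorem and the strict inequality \eqref{fpmca}, monotonicity of $\Min_{(a,b)}$ giving a.e.\ differentiability of $\omega(\cdot,b)$, a smooth comparison family $\bar f^{(a,b)}$ identifying $\partial_a\omega = \alpha^{(a,b)}$ a.e., and finally compactness from \cite{NdiayeSch�tzle1} together with Lemma \ref{alpha<alpha^b} to pin down the limits of the Lagrange multipliers. Your Kuhn--Tucker derivation of $\alpha^{(a,b)}\leq\alpha^b$ is a legitimate pointwise alternative to the paper's route through $\partial_a\varphi\leq 0$; and deriving the saturation contradiction from \eqref{fpmca} rather than from $\tilde a_n>0$ versus $\tilde a_n=0$ is a cosmetic variant.

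There is, however, a genuine gap in the claim that comparing $\omega(a,b)\leq\mathcal W(f_{(a,b)})$ with $\partial_a\mathcal W(f_{(a,b)}) = \alpha_{(a,b)}\nearrow\alpha^b$ yields $\alpha_{sup}\geq\alpha^b$. Both functions agree at $a=0$ and $\omega$ lies below the candidate energy, so (granting absolute continuity, which already requires an argument) the most you extract is $\int_0^a\alpha^{(s,b)}ds\leq\int_0^a\alpha_{(s,b)}ds < a\,\alpha^b$: a Ces\`aro \emph{upper} bound consistent with $\alpha_{sup}\leq\alpha^b$, not the lower bound you want. The paper handles $\alpha_{sup}$ differently: it first shows $\alpha_{sup}\geq 0$ by combining the monotonicity of $\varphi(a,b)=\omega(a,b)-\alpha^b a$ with continuity of $\omega$ (if $\alpha_{sup}<0$ then $\omega$ would be decreasing, contradicting that $\omega(0,b)$ is the minimum over rectangular classes), and \emph{then} applies the very same compactness-plus-Lemma~\ref{alpha<alpha^b} argument that you correctly invoke for $\alpha_{inf}$. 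This ordering is not optional: establishing $\alpha_{sup}\geq 0$ is needed to guarantee a convergent subsequence of multipliers so that compactness can be applied at all, and the subsequent step showing $\alpha_{inf}\geq 0$ (interlacing $\tilde a_k<\hat a_k<a_k$ and comparing against the interior minimum of $\omega$ via the comparison family $\bar f_k$) explicitly uses $\alpha_{sup}=\alpha^b$ to produce the sequence $a_k$ with $\alpha^{(a_k,b)}\to\alpha^b$. So while your compactness argument for $\alpha_{inf}$ would, in principle, cover $\alpha_{sup}$ as well, the logical chain as you wrote it leaves a circular dependency: $\alpha_{inf}\geq 0$ depends on $\alpha_{sup}=\alpha^b$, and you have not correctly established $\alpha_{sup}\geq\alpha^b$. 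Replace the comparison argument for $\alpha_{sup}$ with the same compactness-plus-Lemma~\ref{alpha<alpha^b} reasoning, after first proving $\alpha_{sup}\geq 0$, and the proof closes.
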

\begin{proof} 
By taking $\;b\sim 1\;$ close enough, we have that (using the same arguments as in \cite{NdiayeSchätzle1}, existence part) there exists  $\;a^b >0\;$ small with the property that for all $\;a \in [0, a^b]\;$ the minimization problem 

$$\text{Min}_{(a,b)} = \inf \;\Big\{\; \mathcal W_{\alpha^b}\big(f\big) \;\mathlarger{|}\   f: T^2_b \longrightarrow S^3 \;\text{ smooth immersion } \mathlarger{|} \ 0 \leq \bigPi^1\big(f\big)  \leq a \;\text{ and } \;\bigPi^2\big(f\big) = b \;\Big\}$$

is attained by a smooth immersion $\;f^{(\tilde a,b)}_a\;$ with conformal type $\;(\tilde a,b)\;$ and $\;\tilde a \in [0, a]\;$ solving the Euler-Lagrange equation for (conformally) constrained Willmore tori

$$\delta \mathcal W_{\alpha^{(\tilde a,b)}_a, \;\beta^{(\tilde a,b)}_a}\Big(f^{(\tilde a,b)}_{a}\Big)= 0$$

for some $\;\alpha^{(\tilde a,b)}_a,\; \beta^{(\tilde a,b)}_a \in \R.$

\subsection*{Step (1): $\mathbf{\tilde a =a}$}$\ $\\
For $\;a = 0\;$ the homogenous tori $\;f^b\;$ are the unique minimizer and $\;\tilde a=a=0.\;$ Thus let  $\;a>0\;$ in the following. The candidate surfaces $\;f_{(a,b)}\;$ with $\;f_{(0,b)} = f^b\;$ constructed in \cite{HelNdi2}  satisfy that 

 $$\mathcal W_{\alpha^b}\big(f_{(a,b)}\big) = \omega_{\alpha^b}(a,b)$$ 
 
 is strictly decreasing for $\;a\sim 0,\;$ since 
 
 $$\frac{\del \omega_{\alpha^b}(a,b)}{\del a} = \alpha_{(a,b)} - \alpha^b <0.$$
 
This yields $\;\tilde a >0$.\\

Now, we claim that up to take $\;a^b\;$ smaller $\;\tilde a = a\;$ holds for all $\;a \;\in\; ]0, a^b].\;$  Assume this is not true. Then since $\;\tilde a >0\;$ there would exist a sequence $\;a_n \longrightarrow 0\;$ with corresponding $\;\tilde a_n \longrightarrow 0\;$ sucht that

$$\alpha_n^b\;:=\; \alpha_{\tilde a_n}^{a_n, b}\; =\; \alpha^b \quad\quad\forall n.$$

Then arguing as in \cite{NdiayeSchätzle1} gives

\begin{equation}\label{convergencef}
f^b_n\;:=\;f^{a_n,b}_{\tilde {a}_n} \longrightarrow f^b \quad\quad \text{smoothly} 
\end{equation}

up to invariance. This is a contradiction to our classification of solutions around $\;f^b, \;$ because $\;f^b_n = f^b\;$ implies $\;\tilde a_n = 0, \;$ while we have $\;\tilde a_n>0.$\\

\begin{Rem}\label{monotonevarphi}
Because the minimum $\;\Min_{(a,b)}\;$ for $\;a \in (0,a^b)\;$ is always attained at the boundary, the function 

$$\varphi(a,b):=  \Min_{(a,b)} = \omega(a,b) - \alpha^b a,$$ 

where $\;\omega(a,b)\;$ is the minimal Willmore energy in the class $(a,b),$ is monotonically non-increasing. Therefore $\;\varphi(a,b)\;$ (and thus also $\;\omega(a,b)$) is differentiable almost everywhere in $\;a\;$ and $$\;\del_a\varphi(a,b) \leq 0, \;$$   almost everywhere. \\

\end{Rem} 
\subsection*{Step (2): $\mathbf{\frac{\del}{\del a} \omega(a) = \alpha^{(a,b)} \leq \alpha^b}$ almost everywhere} $\ $\\
The aim in this step is to show the first statement (1) of Lemma \ref{mainobservation} with weaker regularity assumptions on the dependence of $\;f^{(a,b)}\;$ on its conformal class, i.e., to relate $\;\frac{\del \omega(a)}{\del a}\;$ with $\;\alpha^{(a,b)}\;$ for almost every $\;a \in ]0, a^b[\;.$ Then by Remark \ref{monotonevarphi} we obtain the claimed upper bound on the Lagrange multipliers $\;\alpha^{(a,b)}.$\\

For $\;b \sim 1\;$ fixed we can assume up to taking $\;a^b\;$ smaller and by the same arguments in step 1 that the minimizers $\;f^{(a,b)}\;$ are non-degenerate for all $\;a\in (0, a^b).\;$  For $\;a_0 \in  (0, a^b)\;$ such that $\;\omega(a,b)\;$ is differentiable 
 choose variational vector fields $\;V_i^{(a_0, b)}\;$ satisfying $\;\delta \bigPi^i\big(V_j^{(a_0,b)}\big) = \delta_{i,j}\;$ and consider the smooth family of immersions
 
$$\bar f(s,t) := \exp_{f^{(a_0,b)}}\left( tV_1^{(a_0,b)} + sV_2^{(a_0,b)}\right).$$

Then solving the equation 

$$\bigPi\big(\bar f(s,t) \big) = (a,b)$$

defines unique maps $\;t(a)\;$ and $\;s(a)\;$ (with $\;t(a_0) = 0\;$ and $\;s(a_0) = 0$) by the implicit function theorem,
since 

$$\det \Big(\delta \bigPi^i\big(V_j^{(a_0,b)}\big)\Big)_{i,j = 1,2} = 1.$$

Further, consider the Willmore energy of this family $\;\bar f\big(s(a),\;t(a)\big)$ 

$$\bar \omega(a,b) := \mathcal W \Big(\bar f\big(s(a), \;t(a)\big)\Big).$$

Then we can compute

$$\del_a \bar \omega(a,b)\mathlarger{|}_{a= a_0} = \alpha^{(a_0,b)}\del_a\mathlarger{|}_{a=a_0} t(a)$$

Observe that $\;t(a)\;$ and $\;s(a)\;$ are smooth in $a$ and the Taylor expansion for $\;a\;$ and $\;b\;$ gives

$$a= a_0 + t(a) + o\big(\mathlarger{|}t(a)\mathlarger{|}\big).$$

Therefore $\;\del_a\mathlarger{|}_{a= a_0} t(a) = 1\;$ and thus 

$$\del_a \bar \omega(a,b)\mathlarger{|}_{a= a_0} = \alpha^{(a_0,b)}.$$

Now, comparing $\;\bar\omega(a,b)\;$ to  $\;\omega(a,b)$ -- the minimal Willmore energy in the conformal class $\;(a,b)\;$ we obtain that the function 

$$\Delta(a) =\bar\omega(a,b) - \omega(a, b)\geq 0$$ 

with equality at $\;a=a_0.\;$ In other words $\;\Delta\;$ has a local minimum at $\;a= a_0.\;$  Because $\;\omega(a,b)\;$ is differentiable at $\;a= a_0\;$ by assumption and $\;\bar \omega(a,b)\;$ is smooth, we have $\;\del_{a} \Delta\mathlarger{|}_{a = a_0} = 0. \;$ This gives

 $$ \del_{a} \omega(a, b)\mathlarger{|}_{a= a_0} = \del_{a} \bar\omega(a,b)\mathlarger{|}_{a= a_0}  = \alpha^{(a_0,b)}.$$

\subsection*{Step(3): $\mathbf{\lim_{a\rightarrow 0,  a.e.} \alpha^{(a,b)} = \alpha^b}$} $\ $\\

Since $\;\delta\bigPi^2\big(f^b\big)\neq 0,$ we obtain

$$\lim_{a\rightarrow 0}\beta(a, b) \longrightarrow \beta^b,$$

by standard weak compactness argument. Thus it is only necessary to show the convergence of $\;\alpha^{(a,b)}.\;$ We will show its convergence for $\;a \longrightarrow 0\;$ almost everywhere, by which we mean the convergence up to a zero set $\;A \subset [0, a^b),\;$ i.e., 

$$\lim_{a\;\rightarrow \;0,\  a.\;e.} \alpha^{(a,b)}\;:= \lim_{a\;\rightarrow\; 0, \;a \;\in \;[0, a^b)\setminus A}\; \alpha^{(a,b)}.$$

We first show that 

$$\alpha_{sup}:= \limsup_{a\; \rightarrow \;0,\  a.\;e.} \alpha^{(a,b)} = \alpha^b.$$

Clearly, $\;\alpha_{sup} \geq 0.\;$ Otherwise, $\;\del_a \omega(a,b) < 0\;$ almost everywhere. Because of the monotonicity of $\omega(a,b) - a\alpha^b$ and the continuity of $\;\omega(a,b),\;$ we would thus obtain that  $\;\omega(a,b)\;$ is decreasing in $a$ contradicting the fact that $\;\omega(0,b)\;$ is the minimum of $\;\omega(a,b)\;$ for $\;b \sim1.$\\

Assume now that $\;\alpha_{sup} < \alpha^b.\;$ Then there exist a zero sequence $\;(a_k)_{k \in \N}\;$ with $\;a_k>0 \;$ such that the  Lagrange multipliers $\;\alpha^{(a_k, b)}\;$ converge to $\;\alpha_{sup} <\alpha^b.\;$ Thus the corresponding immersions $\;f^{(a_k,b)} \longrightarrow f^b\;$ smoothly up to invariance using same arguments as in step (1) to prove \eqref{convergencef}. But by Lemma \ref{alpha<alpha^b} we then obtain $\;f^{(a_k,b)}  = f^{(0,b)}\;$ for $\;k>>1\;$ in contradition to $\;a_k >0.$\\

Now we want to show that also 

$$\alpha_{inf}:=\liminf_{a\;\rightarrow \;0,\ a.\;e.} \alpha^{(a,b)}= \alpha_{sup} = \alpha^b.$$

For this we first show that $\;\alpha_{inf}\;$ is bounded from below, more precisely, $ \;\alpha_{inf} \geq 0.$\\

Up to choosing  $a^b$ smaller we have by the same arguments as above that $\;\alpha^{(a,b)} \neq 0\;$ for all $\;a \in [0, a^b).\;$ Assume that $\;\alpha_{inf} <0.\;$ Then, since $\;\alpha_{sup} = \alpha^b,\;$ there exist zero sequences $\;(a_k)_{k \in \N},\;$ $(\tilde a_k)_{k \in \N} \subset (0, a^b)\;$ such that $\;\omega(a, b)\;$ is differentiable at $\;a_k\;$ and $\;\tilde a_k\;$ and $\;\tilde a_k< a_k\;$ with 

$$\alpha^{(a_k, b)} \longrightarrow \alpha^b \quad \text{ and } \quad \alpha^{(\tilde a_k, b)} \longrightarrow \alpha_{inf} < 0.$$

Because $\;\omega\;$ is continuous,  it attains its minimum on $\;[\tilde a_k, a_k].\;$ Since the minimal Willmore energy $\;\omega(a,b)\;$ is strictly decreasing (with the same arguments as in the proof of $\;\alpha_{sup}\geq 0$) around $\;\tilde a_k\;$ and strictly increasing around $\;a_k\;$ this minimum
 is always attained at $\;\hat a_k \in (\tilde a_k, a_k).\;$ For $\;k\in \N\;$ and $\;a \sim \hat a_k\;$ consider the smooth family of immersions

$$\bar f_k(s(a), t(a)) = \exp _{ f^{(\hat a_k, b)}}\left( s(a)V_1^{(\hat a_k, b)} + t(a) V_2^{(\hat a_k,b)}\right)$$

with 

$$\;\delta \bigPi^i\big(V_i^{(\hat a_k, b)}\big) = \delta_{i,j}\quad \text{ and }\quad\bigPi\Big(\bar f\big(s(a), t(a)\big)\Big) = (a,b)\;$$ as in Step (2). Let 

$$\;\bar \omega_k(a,b)= \mathcal W\Big(\bar f_k\big(s(a), t(a)\big)\Big)\;$$

 be again the Willmore energy of the family $\;\bar f.\;$ Then $\;\del_a \bar \omega_k(a,b) = \alpha^{(\hat a_k, b)} \neq 0.\;$ Thus $\;\bar \omega_k(a,b)\;$ is either strictly increasing or strictly decreasing around $\;\hat a_k\;$ and there  exist an $\;a \sim a_k\;$ and $\;a \in [\tilde a_k, a_k]\;$ with 

\begin{equation}\label{baromega}
\bar \omega_k(a,b) < \bar \omega_k(\hat a_k, b).
\end{equation}

Equation \eqref{baromega} together with the definition of $\;\omega\;$ and $\;\bar  \omega_k\;$ gives a contradiction to the fact that $\;\omega(\hat a_k, b)\;$ is the minimum of $\;\omega\;$ on $\;[\tilde a_k, a_k],\;$ since

$$\omega(a,b) \leq \bar \omega_k(a,b) < \bar \omega_k(\hat a_k,b) = \omega(\hat a_k,b).$$

It remains to show that $\;\alpha_{inf} = \alpha^b.\;$ For this take again a zero sequence $\;(a_k)_{k \in \N} \subset (0, a^b)\;$ with $\;\omega(a,b)\;$ is differentiable at all $\;a_k\;$ and such that corresponding sequence of Lagrange multipliers satisfies $\;\alpha^{(a_k, b)} \longrightarrow \alpha_{inf}.\;$ Thus, as before, we have that up to take a sub sequence and up to invariance $\;f^{(a_k, b)} \longrightarrow f^{(0,b)}\;$ smoothly. If $\;\alpha_{inf} < \alpha^b, \;$ we obtain by Lemma \ref{alpha<alpha^b} that 

$$f^{(a_k, b)} = f^b \quad \text{ for }\quad k>>1$$

 contradicting the fact that $\;a_k > 0.\;$ Thus we can conclude that
 
 $$\alpha_{inf} = \alpha^b.$$
 
\end{proof}

%%%%%%%%%%%%%%%%%%%%%%%%%%%%%%%%%%%%%%%%%%%%%%%%%%%%%%%%%%%%%%%%%%%%%%%%%%%%%%
%           Bibliography                                                     %
%%%%%%%%%%%%%%%%%%%%%%%%%%%%%%%%%%%%%%%%%%%%%%%%%%%%%%%%%%%%%%%%%%%%%%%%%%%%%%

\end{document}